\newtheorem{lemma}{Lemma}
\newtheorem{theorem}{Theorem}
\newtheorem{proposition}{Proposition}
\numberwithin{equation}{section}
\title{PhD}
\author{}
\date{January 2021}
\begin{document}

\begin{center}
    \textbf{Two CLTs for Sparse Random Matrices}
\end{center}

\begin{center}
     Simona Diaconu\footnote{Courant Institute, New York University, simona.diaconu@nyu.edu}
\end{center}

\begin{abstract}
Let \(G=G(n,p_n)\) be a homogeneous Erdös-Rényi graph, and \(A\) its adjacency matrix with eigenvalues \(\lambda_1(A) \geq \lambda_2(A) \geq ... \geq \lambda_n(A).\) Local laws have been used to show that \(\lambda_2(A)\) can exhibit fundamentally different behaviors: Tracy-Widom (\(p_n \gg n^{-2/3}\)), normal (\(n^{-7/9} \ll p_n \ll~n^{-2/3}\)), and a mix of both (\(p_n=cn^{-2/3}\)). 
Additionally, this technique renders the largest eigenvalue \(\lambda_1(A),\) separated from the rest of the spectrum for \(p_n \gg n^{-1},\) has Gaussian fluctuations when \(p_n \geq n^{-1}(\log{n})^{6+c}\) for some \(c>0.\) This paper shows this remains true in the range \(Bn^{-1}(\log{n})^4 \leq p_n \leq 1-Bn^{-1}(\log{n})^4\) with \(B>0\) universal, the tool behind it being a central limit theorem for the eigenvalue statistics of \(A\) that is justified via the method of moments.
\end{abstract}

\section{Introduction}\label{intro}
Random graphs are crucial in disciplines such as computer science (\cite{feigeofek}) and algorithmic graph theory (\cite{alon}), and consequently the adjacency matrices as well as the Laplacian transforms of several models have been closely studied. The homogeneous Erdös-Rényi graph \(G(n,p_n)\) is by far the most classical example in this field: an undirected simple graph 
on \(n\) vertices, each edge being present independently of the rest with probability \(p_n\) (see \cite{bollobastal} for inhomogeneous graphs, i.e., \(\mathbb{P}(ij \in E(G(n,p_n)))=p_{n,ij}\)). The focus of this paper is on the adjacency matrix \(A \in \mathbb{R}^{n \times n}\) of \(G(n,p_n),\) which is sparse when \(p_n \ll 1\) (in expectation, \(A\) has \(np_n=o(n)\) nonzero entries per row when \(p_n=o(1)\)). Denote its eigenvalues by \(\lambda_1(A) \geq \lambda_2(A) \geq ... \geq \lambda_n(A),\) and \(||A||=\max{(|\lambda_1(A)|,|\lambda_n(A)|)};\) most of the existing results in the literature are concerned with \(\lambda_1(A)\) or \(\lambda_2(A).\) Füredi and Komlós~\cite{furedikomlos} showed that when \(p_n=p \in (0,1),\) 
\begin{equation}\label{e1}
    \lambda_1(A)-[(n-1)p+1-p] \Rightarrow N(0,2p(1-p)).
\end{equation}
Subsequently, Krivelevich and Sudakov~\cite{krivelevichsudakov} proved
\begin{equation}\label{krivsud}
    \lambda_1(A)=(1+o_p(1)) \cdot\max{(\sqrt{\Delta},np_n)},
\end{equation}
where \(\Delta=\Delta(G(n,p_n))\) is the maximal vertex degree of \(G(n,p_n):\) this together with what was known about \(\Delta\) suggested a change in behavior for \(\lambda_1(A)\) when \(np_n \propto \log{n}\) (in the sparse regime \(np_n \ll \log{n},\) there is no concentration: some vertices have degrees much larger than the expectation \(np_n\) and other much lower, whereas in the dense regime \(np_n \gg \log{n}\), the graph is roughly regular: see~\cite{vu}, \cite{lupeng} for almost sure upper bounds on \(||A||,\max_{1 \leq i \leq n}{|\lambda_i(A)-\lambda_i(\mathbb{E}[A])|}\)). The second largest eigenvalue has also been analyzed, especially the centered version \(\lambda_2(A-\mathbb{E}[A]),\) whose asymptotic law is fairly well understood and for which the threshold \(\log{n}\) is anew crucial. Benaych-Georges et al.~\cite{benaychgeorgesetal1}, \cite{benaychgeorgesetal2} showed that for \(np_n \gg \log{n},\) 
\begin{equation}\label{big}
    \lambda_2(A-\mathbb{E}[A]) \xrightarrow[]{p} 2,
\end{equation}
while for \(np_n \ll \log{n},\) the largest eigenvalues are of order greater than \(\sqrt{\log{n}}:\) 
\begin{equation}\label{small}
    \lambda_k(A-\mathbb{E}[A]) \approx \sqrt{\frac{\log{(n/k)}}{\log{[(\log{n})/(np_n)]}}}, \hspace{0.5cm}
    k \geq n^{1-\epsilon}, \hspace{0.2cm} \epsilon>0.
\end{equation}
Alt et al.~\cite{altetal} complemented these two regimes by analyzing \(np_n \approx c\log{n},\) and showing the transition from (\ref{big}) to (\ref{small}) occurs at \(c=\frac{1}{\log{(4/e)}}:\) this threshold has also been discovered through a different approach by Tikhomirov and Youssef~\cite{tikhomirovyoussef}. 
\par
Fluctuations of the largest eigenvalues in more general settings than \(p_n=p\) (covered by Füredi and Komlós) have been obtained as well. Erdös et al.~\cite{erdosetal} extended (\ref{e1}) when \(p_n \geq n^{-1}(\log{n})^{6+c}\) for some \(c>0\) under the normalization suggested by it:
\begin{equation}\label{e2}
    \frac{1}{\sqrt{p_n(1-p_n)}} \cdot (\lambda_1(A)-\mathbb{E}[\lambda_1(A)]) \Rightarrow N(0,2).
\end{equation}
It must be mentioned that despite the similarity between (\ref{e1}) and (\ref{e2}), they are justified through fundamentally different means. On the one hand, two stages render (\ref{e1}): finding the centering by the method of moments (i.e., computing first-order approximations of \(\mathbb{E}[tr(A^{2m})]\) for large \(m \in \mathbb{N}\)), and obtaining the fluctuations via two observations: \(\lambda_1(A)\) is much larger than all the other eigenvalues of \(A,\) and \(v_n=[1 1 \hspace{0.05cm} ... \hspace{0.05cm} 1]^T \in \mathbb{R}^n\) is roughly a leading eigenvector of \(A,\) \(Av_n \approx \lambda_1(A)v_n.\) On the other hand, local laws (i.e., approximations of the Stieltjes transform at complex values with imaginary parts positive and decaying at rates depending on \(n\)) produce (\ref{e2}): after centering the entries of \(A\) and normalizing them, the authors of \cite{erdosetal} prove both the edge eigenvalues of the new matrix \(H\) and its bulk spectrum behave like their counterparts in a Gaussian orthogonal ensemble. This together with Cauchy interlacing inequalities and the eigenvectors of \(H\) being delocalized provide the behavior of both \(\lambda_1(A)\) and \(\lambda_2(A).\) Furthermore, other uses of local laws led to the discovery of a transition at \(p_n \propto n^{-2/3}:\) Huang et al.~\cite{huangetal} showed that for \(n^{-7/9} \ll p_n \ll n^{-2/3}\) its distribution is asymptotically normal,
\[n^{1/2} \cdot (\lambda_2(A)-[2(np_n)^{1/2}+(np_n)^{-1/2}-\frac{5(np_n)^{-3/2}}{4}]) \Rightarrow N(0,1),\]
while at \(p_n=cn^{-2/3},\) it is a combination of a normal and a real-valued Tracy-Widom law. This threshold is optimal insomuch as for \(p_n \gg n^{-2/3}\) the fluctuations are given by the latter:
\[\frac{n^{2/3}}{\sqrt{np_n}} \cdot (\lambda_2(A)-2\sqrt{np_n(1-p_n)}) \Rightarrow TW_1\]
(Erdös et al.~\cite{erdosetal2} covered \(p_n \gg n^{-1/3},\) and Lee and Schnelli~\cite{leeschnelli} extended this range to \(p_n \gg n^{-2/3}\)). 
\par
In this paper, the convergence (\ref{e2}) is justified when \(Bn^{-1}(\log{n})^4 \leq p_n \leq 1-Bn^{-1}(\log{n})^4\) for a universal constant \(B>0\) (Theorem~\ref{th2}), coming closer to the conjecture Erdös et al. made in \cite{erdosetal} that (\ref{e2}) holds as long as \(np_n \gg \log{n}\) than the currently known results (this seems the natural threshold in light of (\ref{krivsud}), and the results mentioned earlier on the concentration of the vertex degrees of \(G(n,p_n)\)). Theorem~\ref{th2} is based on a CLT for statistics of \(A,\) whose proof relies on the counting technique pioneered by Sinai and Soshnikov in \cite{sinaisosh}. This machinery was originally designed to understand traces of powers of real-valued Wigner matrices \(A=(a_{ij})_{1 \leq i,j \leq n}:\) suppose \((a_{ij})_{1 \leq i \leq j \leq n}\) are i.i.d., \(a_{11}\stackrel{d}{=}-a_{11}, \mathbb{E}[a^2_{11}]=\frac{1}{4n}, \mathbb{E}[|a_{11}|^{2k}] \leq (\frac{Ck}{n})^k\) for all \(k \in \mathbb{N},\) and consider the centered traces of the powers of \(A.\) 
In \cite{sinaisosh}, it was shown that when \(m=o(n^{1/2}),m \to \infty,\)
\begin{equation}\label{traceclts}
    tr(A^m)-\mathbb{E}[tr(A^m)] \Rightarrow N(0,\frac{1}{\pi}),
\end{equation}
and the aforementioned authors extended the range for this convergence to \(m=o(n^{2/3})\) in \cite{sinaisosh2}. Moreover, Soshnikov~\cite{sohnikov} employed (\ref{traceclts}) with \(m \propto n^{2/3}\) to derive universality of the asymptotic joint distribution of the edge eigenvalues of Wigner matrices (both real- and complex-valued), and the counting arsenal justifying (\ref{traceclts}) has also been exploited in other contexts, including upper bounds for trace expectations of large powers of \(A\) (e.g., matrices with heavy-tailed entries in Auffinger et al.~\cite{auffinger} and with \(n^{\mu}, \mu \in (0,1],\) nonzero entries per row in Benaych-Georges and Péché~\cite{benaychpeche}) and universality results for finite rank perturbations of \(A\) (Féral and Péché~\cite{feralpeche}).
\par
A considerable difference between the current framework and the previous contexts in which this technique has been used is the lack of symmetry. As remarked in \cite{erdosetal}, one difficulty the method of moments faces with adjacency matrices is their entries not being even centered: particularly, universality oftentimes has been proven in two stages. First, justifying the claims in the Gaussian case (for which formulae can be utilized), and second, showing certain quantities, which determine the asymptotic behavior of the object of interest, depend solely on few moments of the underlying distributions, an example of this being the four moment theorem by Tao and Vu (theorem \(15\) in \cite{taovu}). In the present case, relaxing symmetry to a vanishing first moment suffices to detect the primary contributors in the expectations of interest, leading to a trace CLT for sparse matrices that, to the author's knowledge, has no close equivalent in the existing literature.

\begin{theorem}\label{th1}
Suppose \(A=(a_{ij})_{1 \leq i,j \leq n}\) is symmetric with \((a_{ij})_{1 \leq i \leq j \leq n}\) independent, Bernoulli distributed with parameter \(p_n,np_n(1-p_n)\to \infty,\) and let \(\tilde{p}_n:=\min{(p_n,1-p_n)}.\) There exists \(c_0>0\) such that for \(m \in \mathbb{N}\) with \( \frac{\log{n}}{c_0\log{(n\tilde{p}_n)}} \leq m \leq c_0(n\tilde{p}_n)^{1/4},\) as \(n \to \infty,\)
\begin{equation}\label{conv1}
    \frac{tr(A^{2m})-\mathbb{E}[tr(A^{2m})]}{2m \cdot (np_n)^{2m-1} \cdot 
    \sqrt{p_n(1-p_n)}} \Rightarrow N(0,2).
\end{equation}
\end{theorem}

It can be easily shown using Lindeberg's CLT that
\begin{equation}\label{meq1}
    \frac{tr(A^{2})-\mathbb{E}[tr(A^{2})]}{n\sqrt{p_n(1-p_n)}} \Rightarrow N(0,2).
\end{equation}
The threshold \(\frac{1}{\log{(np_n)}}\) is key for \(\mathbb{E}[tr(A^{2m})]\) in the sense that the terms yielding its first-order approximation 
differ for \(m \gg \frac{1}{\log{(np_n)}}\) and \(m \ll \frac{1}{\log{(np_n)}}:\) this distinction becomes apparent in the proof below (see Proposition~\ref{proptra} below) and explains the  discrepancy between (\ref{conv1}) and (\ref{meq1}). Some comments on the inequalities on \(p_n,m\) are in order. The condition \(np_n(1-p_n) \to \infty\) is employed in the analysis of higher moments of the left-hand side term in (\ref{conv1}) (given the division by \(\sqrt{p_n(1-p_n)},\) a restraint of this kind is unavoidable: it must be noted this is considerably weaker than \(p_n \leq 1-c_2,\) assumed in~\cite{erdosetal}), while the upper and lower bounds on \(m\) are needed when dealing with second-order terms in \(\mathbb{E}[tr(A^{2m})]\) 
and the moments of the ratio of interest, respectively. Regarding odd powers of \(A,\) the current bounds do not identify the leading terms (see end of section~\ref{sect1} for a conjecture on this regard).
Besides, although \(A\) is not the adjacency matrix of a simple graph (it allows loops), the justification of Theorem~\ref{th1} entails its diagonal entries play no role in the first-order term contributors, the result thus holding for adjacency matrices of simple graphs as well.
\par
The convergence in Theorem~\ref{th1} together with the separation between \(\lambda_1(A)\) and the rest of the spectrum of \(A\) (\(\lambda_1(A) \approx np_n,\) whereas \(\lambda_2(A) \approx 2\sqrt{np_n}\)) recoup the asymptotic behavior of the former.

\begin{theorem}\label{th2}
Under the assumptions of Theorem~\ref{th1} on \(A,p_n,\) and \(Bn^{-1}(\log{n})^4 \leq p_n \leq 1-Bn^{-1}(\log{n})^4\) for \(B>0\) universal,
\begin{equation}
    \frac{1}{\sqrt{p_n(1-p_n)}}(\lambda_1(A)-\mathbb{E}[\lambda_1(A)]) \Rightarrow N(0,2).
\end{equation}
\end{theorem}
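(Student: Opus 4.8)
The plan is to deduce Theorem~\ref{th2} from Theorem~\ref{th1} by transferring the fluctuations of the trace statistic $\mathrm{tr}(A^{2m})$ to those of $\lambda_1(A)^{2m}$, and then unwinding the $2m$-th power. The heuristic is that $\lambda_1(A)\approx np_n$ is detached from the bulk ($|\lambda_j(A)|\lesssim 2\sqrt{np_n}(1+o(1))$ for $j\geq 2$ with high probability, by the results quoted in the introduction), so $\mathrm{tr}(A^{2m})=\lambda_1(A)^{2m}+\sum_{j\geq 2}\lambda_j(A)^{2m}$, and the error term $\sum_{j\geq 2}\lambda_j(A)^{2m}$ is, up to a deterministic piece, negligible compared to the normalization $2m(np_n)^{2m-1}\sqrt{p_n(1-p_n)}$ once $m$ is taken at the upper end of the admissible range, because $(n-1)\big(2\sqrt{np_n}\big)^{2m}\big/\big(2m(np_n)^{2m-1}\sqrt{p_n(1-p_n)}\big)\asymp n\cdot 4^m (np_n)^{1-m}/(m\sqrt{p_n})\to 0$ when $4^m\ll (np_n)^{m-1}$, i.e. when $np_n$ is a power of $n$ and $m$ grows, say, like $\epsilon_0\log n/(2\log\log n)$.

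The steps, in order. First, fix $m=m(n)$ in the admissible window of Theorem~\ref{th1}, growing slowly (e.g.\ $m\sim c\log n/\log\log n$), and record the high-probability bound $\max_{j\geq 2}|\lambda_j(A)|\leq (2+o(1))\sqrt{np_n}$ together with $\lambda_1(A)=(1+o_p(1))np_n$; both follow from (\ref{krivsud}) and the delocalization/rigidity inputs cited (Benaych-Georges et al., Alt et al.), valid since $np_n\geq n^{\epsilon_0}\gg\log n$. Second, write
\[
\mathrm{tr}(A^{2m})-\mathbb{E}[\mathrm{tr}(A^{2m})]=\big(\lambda_1(A)^{2m}-\mathbb{E}[\lambda_1(A)^{2m}]\big)+R_n,\qquad R_n:=\sum_{j\geq 2}\lambda_j(A)^{2m}-\mathbb{E}\!\Big[\sum_{j\geq 2}\lambda_j(A)^{2m}\Big],
\]
and show $R_n/\big(2m(np_n)^{2m-1}\sqrt{p_n(1-p_n)}\big)\to 0$ in probability: the high-probability piece is handled by the counting bound above, and the contribution from the low-probability complement is controlled crudely by $|\lambda_j(A)|\leq n$ and the exponentially small probability of the bad event, with the same for the expectation via Cauchy--Schwarz (here one uses the upper end of the $m$-range so that $4^m$ is beaten by $(np_n)^{m-1}=n^{(\epsilon_0)(m-1)}$). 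Combining with Theorem~\ref{th1}, Slutsky gives
\[
\frac{\lambda_1(A)^{2m}-\mathbb{E}[\lambda_1(A)^{2m}]}{2m(np_n)^{2m-1}\sqrt{p_n(1-p_n)}}\Rightarrow N(0,2).
\]
Third, remove the power: set $X_n:=\lambda_1(A)/(np_n)$, so $X_n=1+o_p(1)$ and the display reads $\big(X_n^{2m}-\mathbb{E}[X_n^{2m}]\big)/(2m/(np_n))\cdot (np_n)/\sqrt{p_n(1-p_n)}$; writing $X_n^{2m}=\exp(2m\log X_n)$ and Taylor expanding $\log X_n=(X_n-1)-\tfrac12(X_n-1)^2+\cdots$, one gets $X_n^{2m}-1=2m(X_n-1)\big(1+o_p(1)\big)$ provided $m(X_n-1)^2\to 0$ in probability — which holds because $X_n-1=O_p\big(1/(np_n)\big)$ by the Furedi--Komlos-type concentration (the fluctuation scale of $\lambda_1$ is $\sqrt{p_n}$, hence $X_n-1$ is of order $\sqrt{p_n}/(np_n)$, and $m\cdot p_n/(np_n)^2\to 0$). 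Centering similarly, $\mathbb{E}[X_n^{2m}]-1=2m(\mathbb{E}[X_n]-1)+o(\cdots)$, and dividing through one arrives at $\big(\lambda_1(A)-\mathbb{E}[\lambda_1(A)]\big)/\sqrt{p_n(1-p_n)}\Rightarrow N(0,2)$, modulo checking that the $o_p$ and $o(1)$ errors survive division by the normalization.

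The main obstacle is the linearization step: one must show that $\lambda_1(A)^{2m}$, after centering, is $2m(np_n)^{2m-1}$ times $\lambda_1(A)-\mathbb{E}[\lambda_1(A)]$ up to an error that is $o_p$ of the Gaussian scale $\sqrt{p_n(1-p_n)}$, \emph{uniformly} in the growing parameter $m$. This requires a genuine a priori bound on the fluctuations of $\lambda_1(A)$ at scale $\sqrt{p_n}$ (not just the $o_p(np_n)$ consistency), so that the quadratic and higher Taylor remainders, amplified by the combinatorial factor $m$ (and by $m^2$, $m^3$, \dots\ at higher order), still vanish; one also needs the analogous deterministic expansion for $\mathbb{E}[\lambda_1(A)^{2m}]$, which in turn needs control of $\mathbb{E}[(\lambda_1(A)-\mathbb{E}[\lambda_1(A)])^2]$ — available, e.g., from the second-moment computations underlying the method of moments, or bootstrapped from Theorem~\ref{th1} itself by reading off the variance of the trace statistic and subtracting the bulk contribution. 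Managing the interplay between ``$m$ large enough to kill the bulk term $R_n$'' and ``$m$ small enough that the Taylor remainder $m(X_n-1)^2$ is negligible'' is the delicate balance, but the window $n^{\epsilon_0-1}\leq p_n\leq\tfrac12$ with $m$ near $\epsilon_0\log n/\log\log n$ leaves ample room: the first needs $4^m=n^{o(1)}\ll n^{\epsilon_0(m-1)}$, the second needs $m p_n/(np_n)^2=n^{-1+o(1)}\to 0$, and both hold comfortably.
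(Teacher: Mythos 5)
Your overall architecture matches the paper's: you decompose $\mathrm{tr}(A^{2m})$ into $\lambda_1^{2m}$ plus the bulk sum, show the bulk is negligible against the normalization via the $\mathrm{tr}(\tilde A^{2m})$ bound and Weyl interlacing, and then linearize the $2m$-th power around the center using a concentration input for $\lambda_1$; the linearization of the \emph{random} part (your Taylor expansion of $\log X_n$, the paper's binomial expansion in Lemma~\ref{l4}) is fine once $\lambda_1 = np + O_p(1)$ is established, which the paper gets from a Furedi--Komlos-style decomposition of $u^T A u$.

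The genuine gap is the deterministic bias term, and it is where the paper's real work lies. After linearizing, one is left with
\[
\frac{\mathbb{E}[\lambda_1^{2m}] - (\mathbb{E}[\lambda_1])^{2m}}{2m(\mathbb{E}[\lambda_1])^{2m-1}\sqrt{p(1-p)}} = \frac{1}{2m\sqrt{p(1-p)}}\sum_{k\geq 2}\binom{2m}{k}(\mathbb{E}[\lambda_1])^{1-k}\,\mathbb{E}\big[(\lambda_1-\mathbb{E}\lambda_1)^k\big],
\]
which must be $o(1)$ uniformly over the $m$-window. The high-$k$ tail is controlled by the paper's Lemma~\ref{moments} (which only bites for $k\geq 2m(w)$ with $m(w)\sim 1/\epsilon_0$), but the low-order moments $\mathbb{E}[(\lambda_1-\mathbb{E}\lambda_1)^k]$ for $2\leq k < m(w)$ are \emph{not} available by any direct estimate. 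Your two proposed sources for them do not work: ``second-moment computations underlying the method of moments'' give you $\mathrm{Var}(\mathrm{tr}(A^{2m}))$, not $\mathrm{Var}(\lambda_1)$, and untangling the two involves the covariance between $\lambda_1^{2m}$ and the bulk sum plus undoing the power --- exactly the problem you are trying to solve; ``bootstrapping from Theorem~\ref{th1}'' is circular for the same reason. Moreover, you only mention $\mathbb{E}[\xi^2]$, while in general one needs all moments up to order $m(w)-1$ (which, for small $\epsilon_0$, is a large constant), and a Cauchy--Schwarz interpolation from the controlled high moments does not close the gap because the window starts at $k=2m(w)$.

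The paper's resolution is a distinct and essential idea that your proposal is missing: it never estimates the low-order moments at all. It observes that the residual bias term is a polynomial $P(m,n,p)$ in the slowly-varying parameter $m$ with coefficients $c_k(n,p)$ depending only on the unknown moments, that the CLT for $\xi/\sqrt{1-p} + P(m,n,p)$ holds simultaneously for every $m$ in a growing window, and hence that the successive finite differences $P(m+1,n,p)-P(m,n,p)$, $P_2$, $P_3,\dots$ all tend to $0$, forcing each coefficient $c_k(n,p)\to 0$ and therefore $P(m,n,p)\to 0$. Without this finite-difference argument (or a genuine direct computation of the first $O(1/\epsilon_0)$ moments of $\lambda_1-\mathbb{E}\lambda_1$, which the method of moments does not hand you), your step from the trace CLT to the eigenvalue CLT does not close.

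A minor but real additional issue: you set $X_n = \lambda_1/(np_n)$ rather than $\lambda_1/\mathbb{E}[\lambda_1]$, so $\mathbb{E}[X_n]\neq 1$ and the $k=1$ term in the expansion of $\mathbb{E}[X_n^{2m}]-1$ does not vanish; this introduces an extra $2m(\mathbb{E}[X_n]-1)$ error that must be reconciled with the statement's centering at $\mathbb{E}[\lambda_1]$. Centering at $\mathbb{E}[\lambda_1]$ from the start, as the paper does, removes this bookkeeping.
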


\par
The remainder of the paper consists of proofs: section~\ref{sect1} treats Theorem~\ref{th1}, and section~\ref{sect2} deduces Theorem~\ref{th2} by means of (\ref{conv1}).
\vspace{0.4cm}
\par
\textbf{Acknowledgments:} The bulk of this work has been completed in the author's graduate studies at Stanford University. She is particularly grateful to George Papanicolaou and Lenya Ryzhik for their feedback on earlier versions of this paper.

\section{Trace CLT}\label{sect1}

This section justifies (\ref{conv1}). For the sake of simplicity, take \(p=p_n,\)
and let \(v=[1 1 \hspace{0.05cm} ... \hspace{0.05cm} 1]^T \in \mathbb{R}^n, \Tilde{a}_{ij}=a_{ij}-p.\) Then
\[A=(p+\Tilde{a}_{ij})_{1 \leq i,j \leq n}:=pvv^T+\Tilde{A},\]
\[\mathbb{E}[\Tilde{a}^q_{11}]=p(1-p) \cdot [(1-p)^{q-1}-(-p)^{q-1}], \hspace{0.2cm} q \in \mathbb{N};\]
in particular,
\[\mathbb{E}[\Tilde{a}_{11}]=0, \hspace{0.5cm} \mathbb{E}[\Tilde{a}^2_{11}]=p(1-p),\]
and by convention, \(A\) is assumed to have this structure throughout the remainder of this work. As mentioned in the introduction, (\ref{conv1}) is proved by computing moments: for \(l \in \mathbb{N},\) it is shown that
\begin{equation}\label{convhighmom}
    \lim_{n \to \infty}{\mathbb{E}[( \frac{tr(A^{2m})-\mathbb{E}[tr(A^{2m})]}{2m \cdot (np)^{2m-1} \cdot \sqrt{p(1-p)}})^l]}= \begin{cases}
    0, & l=2l_0-1,l_0 \in \mathbb{N}\\
    2^{l/2}(l-1)!!, & l=2l_0,l_0 \in \mathbb{N}
    \end{cases}
\end{equation}
(this suffices to conclude the desired convergence in law insomuch as normal distributions satisfy Carleman's condition: see, for instance, lemmas \(B.1\) and \(B.2\) in Baik and Silverstein~\cite{baisilvbook}).
\par
For (\ref{convhighmom}), the technique developed by Sinai and Soshnikov in \cite{sinaisosh} is crucial since it constructs a change of summation in
\begin{equation}\label{genexp}
    \mathbb{E}[tr(M^{q})]=\sum_{(i_0,i_1, \hspace{0.05cm} ... \hspace{0.05cm}, i_{q-1})}{\mathbb{E}[m_{i_0i_1}m_{i_1i_2}...m_{i_{q-1}i_0}]}:=\sum_{\mathbf{i}=(i_0,i_1, \hspace{0.05cm} ... \hspace{0.05cm}, i_{q-1},i_0)}{\mathbb{E}[m_{\mathbf{i}}]},
\end{equation}
from cycles \(\mathbf{i}:=(i_0,i_1, \hspace{0.05cm} ... \hspace{0.05cm}, i_{q-1},i_0)\) to tuples of nonnegative integers \((n_1,n_2, \hspace{0.05cm} ... \hspace{0.05cm},n_q),\) and this can be employed to infer the main contributors in (\ref{genexp}). In particular, the distribution of \(a_{11}\) being symmetric is also key in the computations in \cite{sinaisosh} due to this property entailing \(\mathbf{i}\) with \(\mathbb{E}[m_{\mathbf{i}}] \ne 0\) has all undirected edges of even multiplicity: i.e., for \(u,v \in \{1,2, \hspace{0.05cm} ... \hspace{0.05cm},n\},\) 
\[|\{0 \leq t \leq q-1,i_ti_{t+1}=uv\}| \in \{2m, m \in \mathbb{Z}\},\]
where \(uv=vu\) denotes an undirected edge between \(v,u.\) This simplification is substantial, and although in the present situation, neither \(a_{11}\) nor \(\Tilde{a}_{11}\) has symmetric law, the advantage of the latter over the former is being centered. Up to a large extent, the primary upper hand of symmetric distributions is turning most summands on the right-hand side of (\ref{genexp}) negligible. A weaker assumption, a vanishing first moment, accomplishes a considerable reduction as well: despite some cycles \(\mathbf{i}\) with \(\mathbb{E}[m_{\mathbf{i}}]\ne 0\) containing edges of odd multiplicity, graphs alike have at most \(\lfloor q/2 \rfloor\) pairwise distinct undirected edges, a property shared with the first-order contributors in the symmetric case. This observation and the growth of the moments of \(\Tilde{a}_{11}\) will imply that when \(q\) is even, the dominant terms in (\ref{genexp}) for \(M=\Tilde{A}\) are the same as if \(\Tilde{a}_{11}\) were symmetric. 
\par
Before proceeding with the proof of (\ref{conv1}), recall some terminology from Sinai and Soshnikov~\cite{sinaisosh}, necessary in what is to come. Interpret \(\mathbf{i}=(i_0,i_1, \hspace{0.05cm} ...\hspace{0.05cm} ,i_{q-1},i_0)\) as a directed cycle with vertices among \(\{1,2, \hspace{0.05cm} ... \hspace{0.05cm}, n\},\) call \((i_{k-1},i_k)\) its \(k^{th}\) edge for \(1 \leq k \leq q,\) where \(i_{q}:=i_0,\) denote by \((u,v), uv\) directed and undirected edges, respectively, from \(u\) to \(v\) for \(u,v \in \{1,2, \hspace{0.05cm} ... \hspace{0.05cm}, n\}\) (the former edges are the building blocks of the cycles appearing in (\ref{genexp}), while the latter determine their expectations): in particular, \(uv=vu,\) and its \textit{multiplicity} is
\[m(uv):=|\{t: 0 \leq t \leq q-1,i_ti_{t+1}=uv\}|.\]
An edge \((i_k,i_{k+1})\) and its right endpoint \(i_{k+1}\) are \textit{marked} if an even number of copies of \(i_ki_{k+1}\) precedes them: i.e., \(|\{t \in \mathbb{Z}: 0 \leq t \leq k-1, i_ti_{t+1}=i_ki_{k+1}\}| \in 2\mathbb{Z}.\) Call a cycle \(\mathbf{i}\) \textit{even} if each undirected edge has even multiplicity in it, and \textit{odd} otherwise. If the entries of \(M\) are symmetric, then exclusively the first category contributes to (\ref{genexp}): when they are centered, any cycle \(\mathbf{i}\) containing no edge of multiplicity \(1\) can have \(\mathbb{E}[m_{\mathbf{i}}] \ne 0.\) In light of this difference, additional terminology is introduced next.
\par
Say \(e,v\) are \textit{marked jointly} if there exists a marked copy of \(e\) in \(\mathbf{i}\) with \(v\) as its right endpoint: that is, there is \(0 \leq t \leq q-1\) with \((i_t,i_{t+1})\) marked, \(i_ti_{t+1}=e,i_{t+1}=v,\) and if the \(t^{th}\) edge \((i_t,i_{t+1})\) is unmarked, then call the \(s^{th}\) edge its \textit{marked counterpart} for \(0 \leq s \leq t-1, i_{s}i_{s+1}=i_ti_{t+1}, \{q: s<q<t, i_qi_{q+1}=i_ti_{t+1}\}=\emptyset\) (i.e., the last copy of \(i_ti_{t+1}\) preceding the \(t^{th}\) edge, which must be marked). In such cases, the \(t^{th}\) edge is referred to as the unmarked counterpart of the \(s^{th}\) edge: note all unmarked edges have counterparts, but this might fail for marked edges (this occurs exactly when the edges in question are the last copies of edges of odd multiplicity). 
\par
Return to the change of summation mentioned earlier, \(\mathbf{i} \to (n_1,n_2, \hspace{0.05cm} ... \hspace{0.05cm},n_q),\) undertaken in several stages (steps \(1\)-\(5\) in subsection~\ref{1.1}). Any vertex \(j \in \{1,2, \hspace{0.05cm} ... \hspace{0.05cm}, n\}\) of \(\mathbf{i},\) \(i_0\) being the only potential exception, is marked at least once (the first edge of \(\mathbf{i}\) containing \(j\) is of the form \((*,j)\) since \(i_0 \ne j,\) and \(j\) is incident with no earlier edge), for \(0 \leq k \leq q,\) denote by \(N_{\mathbf{i}}(k)\) the set of \(j \in \{1,2, \hspace{0.05cm} ... \hspace{0.05cm}, n\}\) marked exactly \(k\) times in \(\mathbf{i},\) and let \(n_k:=|N_{\mathbf{i}}(k)|.\) Then
\begin{equation}\label{tuplecond}
    \sum_{0 \leq k \leq q}{n_k}=n, \hspace{0.2cm} \sum_{1 \leq k \leq q}{kn_k}= \frac{q+o}{2},
\end{equation}
where \(o\) is the number of pairwise distinct undirected edges in \(\mathbf{i}\) of odd multiplicity (the difference between the number of marked and unmarked edges is \(o,\) and their sum is \(q\)). It must be said the case \(o=0,2|q\) corresponds to even cycles and gives rise to subsets that are essential in understanding most such objects of interest: for \(m \in \mathbb{N},\) let \(\mathcal{C}(m)\) be the set of cycles (up to vertex isomorphism) of length \(2m\) for which \(n_1=m\) and the first vertex is unmarked. These cycles are known to yield the first-order term in \(\mathbb{E}[tr(M^{2m})]\) when the matrix \(M\) is Wigner (see, for instance, \cite{sinaisosh}): in particular, 
\[|\mathcal{C}(m)|=C_m=\frac{1}{m+1} \binom{2m}{m},\] 
and their structure admits a recursive description (see \cite{oldpaper}).
\par
In the remainder of this section, subsection(s)
\begin{itemize}
    \item \ref{1.1} and \ref{1.2} deal with the primary contributors in (\ref{genexp}) for \(M=\Tilde{A}\) when \(2|q,2|q+1,\)
    respectively;
    
    \item \ref{1.3} introduces \textit{lacunary} cycles, used to
    transition from \(\Tilde{A}\) to \(A;\)
    
    \item \ref{1.4} completes the analysis of \(\mathbb{E}[tr(A^{2m})];\)
    
    \item \ref{1.5} concludes the proof of Theorem~\ref{th1}.
    
\end{itemize}
\par
A couple of remarks prior to proceeding with proofs are in order. First, in the forthcoming propositions and lemmas, the growth of \(p\) relative to \(n\) is encompassed by \(w>0,\) which can depend on \(n,\) the primary condition being 
\[\tilde{p}:=\min{(p,1-p)} \geq n^{-1+w},\] 
and the results hold so long as \(q^2\) does not grow faster than \(\min{(\tilde{p}n^{1-w},n^w)}\) (see Propositions~\ref{prop1} and \ref{prop2}). Second, the asymptotic behavior of \(\mathbb{E}[tr(\Tilde{A}^{2q+1})]\) is more complicated to pinpoint than that of its even counterpart, \(\mathbb{E}[tr(\Tilde{A}^{2q})].\) Judging by what happens with the latter, the former should have its leading term given by cycles of length \(2q-2\) with three identical loops attached at some of its vertices: however, the estimate (\ref{oddmom}) is about \(\sqrt{\frac{n}{\tilde{p}}}\) times larger than the contribution of such configurations. 

\subsection{Even Powers of \(\Tilde{A}\)}\label{1.1} 

This subsection, concerned with the behavior of \(\mathbb{E}[tr(\Tilde{A}^{2m})],\) consists of proving the following statement. 

\begin{proposition}\label{prop1}
There exist \(C,c>0\) such that if \(w>0, \tilde{p} \geq n^{w-1}, m^2 \leq c\min{(n^w,\tilde{p}n^{1-w})},\) then
\begin{equation}\label{evenmom}
    C_m(p(1-p))^{m}n^{m+1} \cdot (1-\frac{2m^2}{n}) \leq \mathbb{E}[tr(\Tilde{A}^{2m})] \leq C_m(p(1-p))^{m}n^{m+1} \cdot [1+C \cdot (\frac{m^2}{\tilde{p}n^{1-w}})^{1/3}],
\end{equation}
where \(C_m=\frac{1}{m+1}\binom{2m}{m}\) is the \(m^{th}\) Catalan number.
\end{proposition}
\par
To justify (\ref{evenmom}), use the terminology from Sinai and Soshnikov~\cite{sinaisosh} and its addenda presented in the beginning of the section. 
By convention, \(\mathbf{i}\) is a fixed cycle satisfying certain properties,  \(e\) denotes an undirected edge, \(u,v\) are vertices of \(\mathbf{i}\) (i.e., \(\exists \hspace{0.05cm} 0 \leq t_1,t_2 \leq q, u=i_{t_1}, v=i_{t_2}).\) 
To change summation in
\[\mathbb{E}[tr(\Tilde{A}^{2m})]=\sum_{(i_0,i_1, \hspace{0.05cm} ... \hspace{0.05cm}, i_{2m-1})}{\mathbb{E}[\Tilde{a}_{i_0i_1}\Tilde{a}_{i_1i_2}...\Tilde{a}_{i_{2m-1}i_0}]}:=\sum_{\mathbf{i}=(i_0,i_1, \hspace{0.05cm} ... \hspace{0.05cm}, i_{2m-1},i_0)}{\mathbb{E}[\Tilde{a}_{\mathbf{i}}]},\]
let 
\[\mathcal{O}:=\mathcal{O}(\mathbf{i})=\{e:e=uv, 2|m(e)+1\}
\hspace{0.5cm} 2o:=|\mathcal{O}|,\]
and suppose no undirected edge in \(\mathbf{i}\) has multiplicity one (else \(\mathbb{E}[\Tilde{a}_{\mathbf{i}}]=0\)). Recall (\ref{tuplecond}) which yields
\begin{equation}\label{nomarkedvtx}
    \sum_{1 \leq k \leq m+o}{kn_k}=m+o.
\end{equation}
Fix now \((o,n_1,n_2, \hspace{0.05cm} ... \hspace{0.05cm},n_{m+o}),\) and analyze next how many cycles are mapped to this tuple as well as the size of their corresponding expectations. This is split in \(5\) stages.
\par
\underline{Step 1:} Map the edges of \(\mathbf{i}\) to a tuple \((s_1,s_2, \hspace{0.05cm} ... \hspace{0.05cm},s_{2m})\) with \(m+o, m-o\) elements \(1\) and \(-1,\) respectively, according to \(s_k=1\) if \((i_{k-1},i_k)\) is marked, and \(s_k=-1\) if \((i_{k-1},i_k)\) is unmarked. The ensuing result yields there are 
\begin{equation}\label{sigmamo}
    \sigma(2o,m-o)=\binom{2m-1}{m-o}-\binom{2m-1}{m-o-2}
\end{equation}
such configurations.

\begin{lemma}\label{analoguedyckpaths}
For \(m,n \geq 0,m+n>0,\) let \(\sigma(m,n)\) be the number of tuples of length \(m+2n\) with \(m+n\) entries \(1,\) \(n\) entries \(-1,\) and all partial sums nonnegative: i.e.,
\begin{equation}\label{defsigma}
    \sigma(m,n)=|\{(s_1,s_2, \hspace{0.05cm} ... \hspace{0.05cm},s_{m+2n}), \sum_{j \leq m+2n}{s_j}=m, s_{i} \in \{-1,1\}, \sum_{j \leq i}{s_j} \geq 0, 1 \leq i \leq m+2n\}|.
\end{equation}
Then 
\[\sigma(m,n)=\binom{m+2n-1}{n}-\binom{m+2n-1}{n-2},\]
where \(\binom{m}{q}=0\) for \(m \in \mathbb{N},q<0.\) 
\end{lemma}

\begin{proof}
Use induction on \(n:\) note that \(\sigma(0,n)=C_n, \sigma(m,0)=1, \sigma(m,1)=m+1.\) The claim thus ensues for \(n \leq 1\) or \(m=0.\) Fix now \(n \geq 2:\) the last term in any tuple of interest is \(-1\) or \(1,\) whereby
\begin{equation}\label{rec1}
    \sigma(m,n)=\sigma(m+1,n-1)+\sigma(m-1,n).
\end{equation}
Suppose the result holds for \(\sigma(m-1,n).\) By virtue of the induction hypothesis and the claim for \(\sigma(0,n)\) being also true, showing the right-hand side of the desired formula satisfies the recurrence in (\ref{rec1}) suffices. This follows from
\[(\binom{m+2n-1}{n}-\binom{m+2n-1}{n-2})-(\binom{m+2n-2}{n}-\binom{m+2n-2}{n-2})=\binom{m+2n-2}{n-1}-\binom{m+2n-2}{n-3}\]
by employing \(\binom{s}{t}=\binom{s-1}{t}+\binom{s-1}{t-1}\) for \(s \in \mathbb{N}, t \in \mathbb{Z}.\) This completes the induction step and the proof.
\end{proof}

\par
\underline{Step 2:} Enumerate the marked vertices: in light of (\ref{nomarkedvtx}), the number of possibilities is
\[\frac{(m+o)!}{\prod_{k \geq 1}{(k!)^{n_k}}} \cdot \frac{1}{\prod_{k \geq 1}{n_k!}}.\]
\par
\underline{Step 3:} Choose the vertices of \(\mathbf{i},\) 
\[V(\mathbf{i}):={\{i_j, 0 \leq j \leq 2m\}}:\]
note
\begin{equation}\label{unamrked1}
    |V(\mathbf{i})|=\chi_{i_0 \in N_{\mathbf{i}}(0)}+\sum_{k \geq 1}{n_k}
\end{equation}
because any vertex in \(\mathbf{i},\) the first being the sole possible exception, is marked at least once.
\par
\underline{Step 4:} Establish the order of the remaining vertices: similarly to what occurs with even cycles, only the right endpoints of the unmarked edges are yet to be chosen. Lemma \(1\) in \cite{oldpaper} still holds as it does not use the parity of \(\mathbf{i},\) giving that for \(v \in N_{\mathbf{i}}(k)\) the number of possibilities at each such appearance of \(v\) is at most
\[\begin{cases}
    1, \hspace{0.6cm} k \leq 1,\\
    2k, \hspace{0.4cm} k \geq 2,
\end{cases}\]
(its proof entails there are at most \(2k\) marked edges that could be the marked counterpart of \((v,*)\)), and the overall bound for this stage remains 
\begin{equation}\label{bdstep444}
    \prod_{k \geq 2}{(2k)^{kn_k}},
\end{equation}
because there are at most \(k\) unmarked edges of the type \((v,*):\) denote by \(U,M\) the positions of the unmarked, marked edges, respectively (the \(t^{th}\) edge is \((i_{t-1},i_t)\)), and let 
\[\alpha(v)=|\{t:i_t=v,(t,t+1) \in M\times U\}|,\hspace{0.8cm} \beta(v)=|\{t: i_t=v,(t,t+1) \in U\times M\}|,\]
\[\gamma(v)=|\{t: i_t=v,(t,t+1) \in M\times M\}|,\hspace{0.8cm}  \delta(v)=|\{t: i_t=v,(t,t+1) \in U\times U\}|;\]
the number of interest is \(\alpha(v)+\delta(v) \leq k=\alpha(v)+\gamma(v)\) since \(v\) is incident with at least as many marked edges as unmarked, \(\alpha(v)+\beta(v)+2\gamma(v) \geq \alpha(v)+\beta(v)+2\delta(v).\)
\par
\underline{Step 5:} For \(q \in \mathbb{N}, q \geq 2,\) 
\begin{equation}\label{boundtilde}
    |\mathbb{E}[\Tilde{a}^q_{11}]|=p(1-p) \cdot |(1-p)^{q-1}-(-p)^{q-1}| \leq \tilde{p}(1-\tilde{p})^{q} \cdot (1+(\frac{\tilde{p}}{1-\tilde{p}})^{q-1}) \leq \tilde{p}(1-\tilde{p})^{q} \cdot(1+\frac{\tilde{p}}{1-\tilde{p}}).
\end{equation}
Since \((1-2p)^q \cdot \mathbb{E}[\Tilde{a}^q_{11}] \geq 0\) for all \(q \in \mathbb{N},\) all the summands in 
\[\mathbb{E}[tr(\Tilde{A}^{s})]=\sum_{\mathbf{i}=(i_0,i_1,\hspace{0.05cm}... \hspace{0.05cm},i_{s-1},i_0)}{\mathbb{E}[\Tilde{a}_{\mathbf{i}}]}\]
have the sign \((1-2p)^s.\) In particular, all the expectations can be replaced with their absolute values in the current case at no cost for the value of interest. Let 
\(e(\mathbf{i}):=|\{i_ki_{k+1},0 \leq k \leq 2m-1\}|,\) and notice 
\begin{equation}\label{unmarked2}
    e(\mathbf{i}) \geq \sum_{k \geq 1}{n_k}-\chi_{i_0 \not \in N_{\mathbf{i}}(0)}
\end{equation}
(take \(v \in \cup_{k \geq 1}{N_{\mathbf{i}}(k)}, v \ne i_0\) to the edge \(e=i_ti_{t+1},\) where \(t\) is minimal with \(i_t=v\) or \(i_{t+1}=v;\) then \(i_{t+1}=v\) because \(v \ne i_0,\) and \(i_j \ne v\) for \(0 \leq j \leq t;\) this yields the mapping is injective since \(v\) is the right endpoint of the first copy of \(e\) in \(\mathbf{i}\)). This last inequality in conjunction with (\ref{boundtilde}) and \(e(\mathbf{i}) \leq m\) entails
\begin{equation}\label{boundexpp}
    |\mathbb{E}[\Tilde{a}_{\mathbf{i}}]| \leq \tilde{p}^{e(\mathbf{i})}(1-\tilde{p})^{2m} \cdot (\frac{1}{1-\tilde{p}})^{e(\mathbf{i})} \leq \tilde{p}^{-1+\chi_{i_0 \in N_{\mathbf{i}}(0)}+\sum_{k \geq 1}{n_k}}(1-\tilde{p})^{m}.
\end{equation}
\par
It becomes apparent, when putting together steps \(1-5,\) that a connection between \((n_k)_{k \geq 1}\) and \(|\mathcal{O}|\) is necessary. This is established next.

\begin{lemma}\label{l1} 
If \(\mathbf{i}\) contains no edge of multiplicity \(1,\) then
\begin{equation}\label{parta}
    (a) \hspace{0.2cm} \sum_{k \geq 2}{kn_k} \geq \frac{3 \cdot |\mathcal{O}|}{2},
\end{equation}
\begin{equation}\label{unmarked3}
    (b) \hspace{0.2cm} \sum_{k \geq 2}{(k-1)n_k} \geq |\mathcal{O}|-\chi_{i_0 \not \in N_{\mathbf{i}}(0)}.\
\end{equation}
\end{lemma}

\begin{proof}
\((a)\) Let \(T\) be the number of edges \(e=uv \in \mathcal{O}\) with \(u \in N_{\mathbf{i}}(1)\) and \(e,u\) marked jointly. It suffices to show:
\begin{equation}\label{ineq2}
    \sum_{k \geq 2}{kn_k} \geq 2 \cdot |\mathcal{O}|-T,
\end{equation}
\begin{equation}\label{ineq22}
    T \leq \frac{|\mathcal{O}|}{2}.
\end{equation}
\par
For (\ref{ineq2}), proving no edge has both endpoints in \(N_{\mathbf{i}}(1)\) and marked jointly with it is enough because the number of all marked copies of elements of \(\mathcal{O}\) would be on the one hand, at least \(2 \cdot |\mathcal{O}|,\) while on the other hand, at most \(T+\sum_{k \geq 2}{kn_k}.\) Suppose for the sake of a contradiction that there exists such an edge \(e=uv \in \mathcal{O}, u,v \in N_{\mathbf{i}}(1);\) \(u \ne v\) insomuch as \(m(uv) \geq 3,\) and let \(t\) be minimal with \(i_t \in \{u,v\}.\) If \(t>0,\) then \((i_{t-1},i_t)\) is marked (it is the first edge in \(\mathbf{i}\) incident with \(i_t\)) and \(i_{t-1}i_t \ne e,\) implying \(i_t \in \{u,v\} \cap (\cup_{k \geq 2}{N_{\mathbf{i}}(k)}),\) contradiction. Hence \(e=i_0i_1 \in \mathcal{O},\) and \(i_0,i_1 \in N_\mathbf{i}(1)\) are both marked jointly with \(i_0i_1.\) Because \(m(i_0i_1) \geq 3\) and the first marked copy of \(e\) has \(i_1\) as its right endpoint, there are \(0<t_1<t_2\) with \(i_{t_1}i_{t_1+1}=i_0i_1\) unmarked, \((i_{t_2},i_{t_2+1})=(i_1,i_0)\) marked. Since the second time \(i_0\) appears in the cycle is marked unless its predecessor is \(i_1,\) this must occur at \(t_1+1:\) \((i_{t_1},i_{t_1+1})=(i_1,i_0).\) If \(t_1>1,\) consider the first appearance of \(i_1\) in \([2,t_1];\) this is the right endpoint of a marked edge (\(i_0\) does not show up in this interval), contradicting \(i_1 \in N_{\mathbf{i}}(1);\) hence \(t_1=1\) and the first three vertices of \(\mathbf{i}\) are \((i_0,i_1,i_0);\) take now the second time \(i_1\) appears in \(\mathbf{i}\) (there is a third copy of \(i_1i_0,\) and \(i_0 \ne i_1\)); this is the right endpoint of a marked edge, again absurd. 
\par
Continue with (\ref{ineq22}). \(T\) is the number of vertices \(u \in  N_{\mathbf{i}}(1)\) for which there exists \(e=uv \in \mathcal{O}\) such that the cycle contains a marked copy of \((v,u).\) Since \(\mathbf{i}\) is a cycle, there are at least two elements of \(\mathcal{O}\) with which \(u\) is incident (either \(u\) is incident with an even number of edges, or \(uu\) appears in \(\mathbf{i}:\) the latter case cannot occur because \(m(uu) \geq 3\) entails \(u \in \cup_{k \geq 2}{N_{\mathbf{i}}(k)}\)), and the previous paragraph yields any edge in \(\mathcal{O}\) is incident with at most such a vertex \(u,\) whereby \(2T \leq |\mathcal{O}|.\) 
\par
\((b)\) List the marked edges in \(\mathbf{i}\) by reading its edges in increasing order, and for each \(v \in \cup_{k \geq 1}{N_{\mathbf{i}}(k)},\) delete the first edge \(e(v)\) with which it is marked jointly. The number of remaining edges is 
\[\sum_{k \geq 1}{(k-1)n_k}=\sum_{k \geq 2}{(k-1)n_k},\]
and showing each undirected edge has at most one deleted copy, with one potential exception when \(i_0\) is marked, suffices since this entails the number of remaining edges is at least
\[2|\mathcal{O}|+|\mathcal{E}|-(|\mathcal{O}|+|\mathcal{E}|+\chi_{i_0 \in \cup_{k \geq 1}{N_{\mathbf{i}}}(k)})=|\mathcal{O}|-\chi_{i_0 \not \in N_{\mathbf{i}}(0)},\]
where \(\mathcal{E}\) is the set of undirected edges with even multiplicities (i.e., \(\mathcal{E}=\{e: e=uv,2|m(e),m(e)>0\}\)). 
\par
Suppose \(u,v\) satisfy \(u \ne v\) and \(e(v)=e(u):\) then \(e(u)=e(v)=uv.\) Assume without loss of generality that \(u\) appears for the first time in the cycle before \(v\) does, and  let \(t_1<t_2\) be these positions. If \(t_1>0,\) then \((i_{t_1-1},i_{t_1})=(*,u)\) is marked because it is the first edge incident with \(u,\) contradicting \(e(u)=uv.\) Hence \(t_1=0,u=i_0,\) concluding the justification of the claim above.                     
\end{proof}

Now the proof of Proposition~\ref{prop1} can be completed.

\begin{proof}
The final bound becomes, in light of steps \(1-5,\) and \(\sum_{k \geq 1}{n_k}=m+o-\sum_{k \geq 2}{(k-1)n_k},\) 
\[[\binom{2m-1}{m-o}-\binom{2m-1}{m-o-2}] \cdot n^{m} (\tilde{p}(1-\tilde{p}))^{m}\cdot \]
\[\sum_{(n_k)_{k \geq 1}}{n^{\chi_{i_0 \in N_{\mathbf{i}}(0)}} \cdot \tilde{p}^{-1+\chi_{i_0 \in N_{\mathbf{i}}(0)}} \cdot \tilde{p}^{o-\sum_{k \geq 2}{(k-1)n_k}} \cdot n^{o-\sum_{k \geq 2}{(k-1)n_k}} \cdot \frac{(m+o)!}{\prod_{k \geq 1}{(k!)^{n_k}}} \cdot \frac{1}{\prod_{k \geq 1}{n_k!}} \cdot \prod_{k \geq 2}{(2k)^{kn_k}}}.\]
Inequality (\ref{unmarked3}) gives
\[\sum_{k \geq 2}{(k-1)n_k} \geq w \cdot (2o-1+\chi_{i_0 \in N_{\mathbf{i}}(0)})+(1-w)\sum_{k \geq 2}{(k-1)n_k},\]
whereby the bound is at most
\[[\binom{2m-1}{m-o}-\binom{2m-1}{m-o-2}] \cdot n^{1+m} (p(1-p))^{m}\cdot\]
\begin{equation}\label{intbound}
    \sum_{(n_k)_{k \geq 1}}{(\tilde{p}n^{1-2w})^o \cdot \tilde{p}^{-\sum_{k \geq 2}{(k-1)n_k}} \cdot n^{-(1-w)\sum_{k \geq 2}{(k-1)n_k}} \cdot \frac{(m+o)!}{\prod_{k \geq 1}{(k!)^{n_k}}} \cdot \frac{1}{\prod_{k \geq 1}{n_k!}} \cdot \prod_{k \geq 2}{(2k)^{kn_k}}}
\end{equation}
because \(\tilde{p}(1-\tilde{p})=p(1-p),\) and
\begin{equation}\label{savings}
    n^{\chi_{i_0 \in N_{\mathbf{i}}(0)}} \cdot \tilde{p}^{-1+\chi_{i_0 \in N_{\mathbf{i}}(0)}} \cdot n^{w(1-\chi_{i_0 \in N_{\mathbf{i}}(0)})}=n (\tilde{p}^{-1}n^{-(1-w)})^{1-\chi_{i_0 \in N_{\mathbf{i}}(0)}} \leq n.
\end{equation}
Since \((m+o)!=(\sum_{k \geq 1}{kn_k})! \leq n_1! \cdot (m+o)^{\sum_{k \geq 2}{kn_k}}, k! \geq (k/e)^k,\)
\[\sum_{(n_k)_{k \geq 1}}{\tilde{p}^{-\sum_{k \geq 2}{(k-1)n_k}} \cdot n^{(w-1)\sum_{k \geq 2}{(k-1)n_k}} \cdot \frac{(m+o)!}{\prod_{k \geq 1}{(k!)^{n_k}}} \cdot \frac{1}{\prod_{k \geq 1}{n_k!}} \cdot \prod_{k \geq 2}{(2k)^{kn_k}}} \leq
\]
\[\leq \sum_{(n_k)_{k \geq 2}}{\tilde{p}^{-\sum_{k \geq 2}{(k-1)n_k}} \cdot n^{(w-1)\sum_{k \geq 2}{(k-1)n_k}} \cdot \prod_{k \geq 2}{\frac{(m+o)^{kn_k}(2k)^{kn_k}}{n_k!(k!)^{n_k}}}} \leq \sum_{(n_k)_{k \geq 2}}{\prod_{k \geq 2}{\frac{1}{n_k!} \cdot [\frac{(2e)^k(m+o)^{k})}{\tilde{p}^{k-1}n^{(1-w)(k-1)}}}]^{n_k}}.\]
\par
For any \(\epsilon \in [0,1],\)
\begin{equation}\label{prod}
    \prod_{k \geq 2}{\frac{1}{n_k!} \cdot [\frac{(2e)^k(m+o)^{k})}{\tilde{p}^{k-1}n^{(1-w)(k-1)}}}]^{\epsilon n_k} \leq \exp(\sum_{k \geq 2}{\frac{(2e)^{k\epsilon}(m+o)^{k\epsilon}}{\tilde{p}^{(k-1)\epsilon}n^{(1-w)(k-1)\epsilon}}}),
\end{equation}
and part \((a)\) in Lemma~\ref{l1} implies for \(\tilde{p} \geq n^{w-1}, 16m \leq (\tilde{p}n^{1-w})^{1/2},\)
\begin{equation}\label{prod22}
    \prod_{k \geq 2}{ [\frac{(2e)^k(m+o)^{k})}{\tilde{p}^{k-1}n^{(1-w)(k-1)}}}]^{(1-\epsilon) n_k} \leq \prod_{k \geq 2}{ [\frac{2e \cdot (m+o)}{\tilde{p}^{1/2}n^{(1-w)/2}}}]^{(1-\epsilon)k n_k} \leq (\frac{4em}{\tilde{p}^{1/2}n^{(1-w)/2}})^{(1-\epsilon) \cdot 3o}.
\end{equation}
As
\[(\frac{4em}{\tilde{p}^{1/2}n^{(1-w)/2}})^{3-3\epsilon} \cdot \tilde{p}n^{1-2w}=(4em)^{3-3\epsilon}\tilde{p}^{(3\epsilon-1)/2}n^{(3\epsilon-1)/2-w(3\epsilon+1)/2},\]
\(\epsilon=\frac{1}{3}\) implies the bound (\ref{intbound}) for \(o \geq 0, m^2 \leq  c\min{(n^{w},\tilde{p}n^{1-w})}\) is at most 
\[[\binom{2m-1}{m-o}-\binom{2m-1}{m-o-2}] \cdot n^{1+m} (p(1-p))^{m} \cdot ((4em)^{2}n^{-w})^o \cdot \exp(\frac{Cm^{2/3}}{(\tilde{p}n^{1-w})^{1/3}}) \leq \]
\begin{equation}\label{obound}
    \leq 2 \cdot [\binom{2m-1}{m-o}-\binom{2m-1}{m-o-2}] \cdot n^{1+m} (p(1-p))^{m} \cdot ((4em)^{2}n^{-w})^o.
\end{equation}
\par
For \(\delta \in (0,\frac{1}{3}],\) \(f(o)=[\binom{2m-1}{m-o}-\binom{2m-1}{m-o-2}] \cdot \delta^o\) decreases in \(o \in \mathbb{Z}_{\geq 0}\) because
\(f(o)=\delta^o \cdot \frac{(2o+1)(2m)!}{(m+o+1)!(m-o)!},\)
whereby
\[\frac{f(o+1)}{f(o)}=\delta \cdot \frac{2o+3}{2o+1} \cdot \frac{m-o}{m+o+2} <\delta \cdot 3 \cdot 1 \leq 1.\]
Notice (\ref{obound}) is tight up to a constant factor when \(o=0\) inasmuch as the contribution of the elements of \(\mathcal{C}(m)\) (i.e., \(n_1=m, i_0\) unmarked: see section \(2.3\) in \cite{oldpaper} for a recursive description of \(\mathcal{C}(m)\)) is 
\[C_m (p(1-p))^{m}\prod_{0 \leq i \leq m}{(n-i)}=[\binom{2m-1}{m}-\binom{2m-1}{m-2}] \cdot n^{1+m} (p(1-p))^{m} (1-O(\frac{m^2}{n})),\]
whereby the lower bound in (\ref{evenmom}) ensues because all the moments of \(\Tilde{a}_{11}\) can be taken to be nonnegative in light of the observation below (\ref{boundtilde}), \(1-x \geq e^{-2x}\) for \(x \in [0,\frac{1}{2}]\) from \((e^{2x}(1-x))'=e^{2x}(1-2x) \geq 0,\) and when \(m \leq \frac{n}{2},\)
\begin{equation}\label{exppeq}
    1 \geq \prod_{0 \leq i \leq m}{\frac{n-i}{n}} \geq \exp(-\sum_{0 \leq i \leq m}{\frac{2i}{n}}) \geq \exp(-\frac{2m^2}{n}) \geq 1-\frac{2m^2}{n}.
\end{equation}
For the upper bound in (\ref{evenmom}), it suffices to show the contributions when \(o>0\) and \(o=0,\mathbf{i} \not \in \mathcal{C}(m)\) can be absorbed by the last term in (\ref{evenmom}). When \(o>0,\) this follows from (\ref{obound}) and \(m^2n^{-w} \leq m^{2/3}(\tilde{p}n^{1-w})^{1/3}\) (a consequence of \(m^{4/3}=m \cdot m^{1/3} \leq n^{w/2} \cdot (\tilde{p}n^{1-w})^{1/3} \leq n^w\cdot (\tilde{p}n^{1-w})^{1/3}\)). When \(o=0,\mathbf{i} \not \in \mathcal{C}(m),\) this entails \(n_1 \ne m\) or \(i_0 \not \in N_{\mathbf{i}}(0),\) whereby either \(\sum_{k \geq 2}{kn_k}>0,\) in which case the product in (\ref{prod}) is at most
\begin{equation}\label{expbound}
    \exp(\sum_{k \geq 2}{\frac{(2e)^{k\epsilon}(m+o)^{k\epsilon}}{\tilde{p}^{(k-1)\epsilon}n^{(1-w)(k-1)\epsilon}}})-1 \leq \frac{2 \cdot (2e)^{2/3}m^{2/3}}{(\tilde{p}n^{1-w})^{1/3}},
\end{equation}
or a factor of
\[(\tilde{p}n^{1-w})^{-1} \leq \frac{m^{2/3}}{(\tilde{p}n^{1-w})^{1/3}}\] 
can be added in virtue of (\ref{savings}). The proof of the proposition is now complete.
\end{proof}

\subsection{Odd Powers of \(\Tilde{A}\)}\label{1.2}

This subsection considers \(\mathbb{E}[tr(\Tilde{A}^{2m+1})]\) and consists of proving the following statement. 

\begin{proposition}\label{prop2}
There exist \(C,c>0\) such that if \(w>0, \tilde{p} \geq n^{w-1}, m \leq c(np)^{1/4},\) then
\begin{equation}\label{oddmom}
    \mathbb{E}[tr(\Tilde{A}^{2m+1})] \leq C_{m+1}(p(1-p))^{m}n^{m+1} \cdot \frac{Cm^2}{(\tilde{p}n)^{1/2}}.
\end{equation}    
\end{proposition}

\begin{proof}
Proceed as in the proof of Proposition~\ref{prop1}. Fix a cycle \(\mathbf{i}\) of length \(2m+1,\) let 
\[\mathcal{O}:=\mathcal{O}(\mathbf{i})=\{e:e=uv, 2|m(e)+1\},
\hspace{0.5cm} 2o+1:=|\mathcal{O}|,\]
and suppose anew no undirected edge in \(\mathbf{i}\) has multiplicity \(1.\) Before proceeding with 
steps \(1-5,\) note (\ref{tuplecond}) gives that there are \(m+o+1\) marked edges, i.e.,
\begin{equation}\label{knkeq}
    \sum_{k \geq 1}{kn_k}=m+o+1.
\end{equation}
\par
\underline{Step 1:} Map the edges of \(\mathbf{i}\) to a tuple \((s_1,s_2, \hspace{0.05cm} ... \hspace{0.05cm},s_{2m+1})\) with \(m+o+1, m-o\) elements \(1\) and \(-1,\) respectively, according to \(s_k=1\) if \((i_{k-1},i_k)\) is marked, and \(s_k=-1\) if \((i_{k-1},i_k)\) is unmarked. Lemma~\ref{analoguedyckpaths} entails the number of such paths is
\[\sigma(2o+1,m-o)=\binom{2m}{m-o}-\binom{2m}{m-o-2}.\]
\par
\underline{Step 2:}  Enumerate the marked vertices: in light of (\ref{knkeq}), the number of possibilities is
\[\frac{(m+o+1)!}{\prod_{k \geq 1}{(k!)^{n_k}}} \cdot \frac{1}{\prod_{k \geq 1}{n_k!}}.\]
\par
\underline{Steps 3,4:} Idem.
\par
\underline{Step 5:} 
(\ref{boundexpp}) continues to hold, 
and Lemma~\ref{l1} gives
\begin{equation}\label{oddsum}
    \sum_{k \geq 2}{kn_k} \geq 3o+2, \hspace{0.8cm} \sum_{k \geq 2}{(k-1)n_k} \geq 2o+1-\chi_{i_0 \not \in N_{\mathbf{i}}(0)}.
\end{equation}
\par
The final bound becomes for fixed \(o,\)
\[[\binom{2m}{m-o}-\binom{2m}{m-o-2}] \cdot n^{m+1} (p(1-p))^{m}\cdot \]
\begin{equation}\label{bd1}
    \sum_{(n_k)_{k \geq 1}}{\tilde{p}^{o+\chi_{i_0 \in N_{\mathbf{i}}(0)}-\sum_{k \geq 2}{(k-1)n_k}} \cdot n^{o+\chi_{i_0 \in N_{\mathbf{i}}(0)}-\sum_{k \geq 2}{(k-1)n_k}} \cdot \frac{(m+o+1)!}{\prod_{k \geq 1}{(k!)^{n_k}}} \cdot \frac{1}{\prod_{k \geq 1}{n_k!}} \cdot \prod_{k \geq 2}{(2k)^{kn_k}}}.
\end{equation}

\begin{lemma}\label{lemma38}
    If \(\mathbf{i}\) has no edge of multiplicity \(1,\) then
    \[-o-\chi_{i_0 \in N_{\mathbf{i}}(0)}+\sum_{k \geq 1}{(k-1)n_k} \geq \frac{1}{2}\sum_{k \geq 1}{(k-1)n_k}.\]
\end{lemma}

\begin{proof}
This is a consequence of Lemma~\ref{lemma388}: in this case, \(s=1,\) and note that among the configurations for which the inequality does not hold there are no cycles (\(u \ne v\)).
\end{proof}

Return to (\ref{bd1}). Lemma~\ref{lemma38} and \(\tilde{p}n \geq n^w \geq 1\) provide the upper bound
\[[\binom{2m}{m-o}-\binom{2m}{m-o-2}] \cdot n^{m+1} (p(1-p))^{m}\cdot \sum_{(n_k)_{k \geq 1}}{ (\tilde{p}n)^{-\sum_{k \geq 2}{\frac{(k-1)n_k}{2}}}\cdot \frac{(m+o+1)!}{\prod_{k \geq 1}{(k!)^{n_k}}} \cdot \frac{1}{\prod_{k \geq 1}{n_k!}} \cdot \prod_{k \geq 2}{(2k)^{kn_k}}}.\]
Reason as in the previous subsection: for any \(\epsilon \in [0,1],\) the analogs of (\ref{prod}) and (\ref{prod22}) yield the above is at most 
\[[\binom{2m}{m-o}-\binom{2m}{m-o-2}] \cdot n^{m+1} (p(1-p))^{m}\cdot \sum_{(n_k)_{k \geq 1}}{\prod_{k \geq 2}{\frac{(2e(m+o)(\tilde{p}n)^{-\frac{k-1}{2k}})^{kn_k}}{n_k!}}} \leq\]
\[\leq [\binom{2m}{m-o}-\binom{2m}{m-o-2}] \cdot n^{m+1} (p(1-p))^{m}\cdot [\exp(\frac{Cm^{2\epsilon}}{(\tilde{p}n)^{\epsilon/2}})-1] \cdot (\frac{4em}{(\tilde{p}n)^{1/4}})^{(1-\epsilon)(3o+2)} \leq\]
\[\leq C(\epsilon) \cdot [\binom{2m}{m-o}-\binom{2m}{m-o-2}] \cdot n^{m+1} (p(1-p))^{m}\cdot (\frac{m}{(\tilde{p}n)^{1/4}})^2 \cdot (\frac{4em}{(\tilde{p}n)^{1/4}})^{3(1-\epsilon)o}\]
employing \(e^x-1 \leq 2x\) for \(x \in [0,\frac{1}{2}],\) and \(\sum_{k \geq 2}{n_k}>0\) from \(\sum_{k \geq 2}{kn_k} \geq 3o+2>0\) (see (\ref{oddsum})). Let anew \(\epsilon=1/3,\) and thus for \(m \leq c(n\tilde{p})^{1/4},\)
\begin{equation}\label{cval}
    \delta=(\frac{4em}{(n\tilde{p})^{1/4}})^{3(1-\epsilon)} \in (0,\frac{1}{4}),
\end{equation}
whereby using that for 
\(g(o)=[\binom{2m}{m-o}-\binom{2m}{m-o-2}] \cdot \delta^o=\delta^o \cdot \frac{2(2m+1)!(o+1)}{(m-o)!(m-o+2)!},\) 
\[\frac{g(o+1)}{g(o)}=\delta \cdot \frac{o+2}{o+3} \cdot \frac{m-o+2}{m-o+1} \leq \delta \cdot 1 \cdot 2<\frac{1}{2},\]
the final bound 
becomes 
\[C(\epsilon) \cdot C_{m+1} n^{m+1} (p(1-p))^{m}\cdot 2 \cdot \frac{m^2}{(\tilde{p}n)^{1/2}}\]
using \(g(0)=\sigma(1,m) \leq \sigma(0,m+1)=C_{m+1}\) (any tuple in the set underlying \(\sigma(1,m)\) can be extended to a tuple with \(m+1\) entries \(1\) and \(m+1\) entries \(-1\) by adding \(1\) at its end: see (\ref{defsigma}) for the definition of \(\sigma(\cdot,\cdot)\)). This completes the proof of the claimed inequality.
\end{proof}


\subsection{Lacunary Cycles}\label{1.3}

For \(m \in \mathbb{N},\) call 
\[\mathbf{i}=(i_0,i_1,i_2,\hspace{0.05cm}...\hspace{0.05cm},i_{m-1},i_0,e_1,e_2,\hspace{0.05cm}...\hspace{0.05cm},e_{m}), \hspace{0.2cm} i_0,i_1,\hspace{0.05cm}...\hspace{0.05cm},i_{m-1} \in \{1,2,\hspace{0.05cm}...\hspace{0.05cm},n\}, \hspace{0.2cm} e_1,e_2,\hspace{0.05cm}...\hspace{0.05cm},e_{m} \in \{0,1\}\]
a \textit{lacunary cycle}, let
\[a_{\mathbf{i}}:=\prod_{0 \leq t \leq m-1}{(\chi_{e_{t+1}=1} \cdot a_{i_ti_{t+1}}+\chi_{e_{t+1}=0} \cdot p)},\]
and for \(0 \leq k \leq m-1,\) call 
\((i_k,i_{k+1})\) \textit{erased} (\textit{unerased}) if \(e_{k+1}=0\) \((e_{k+1}=1)\) 
with \(i_{m}:=i_0.\)
\par
The traces of interest are sums over lacunary cycles:
\begin{equation}\label{trlac}
    \mathbb{E}[tr(A^{m})]=\mathbb{E}[tr((\Tilde{A}+(p)_{1 \leq i,j \leq n})^{m})]=\sum_{\mathbf{i}=(i_0,i_1,i_2,\hspace{0.05cm}...\hspace{0.05cm},i_{m-1},i_0,e_1,e_2,\hspace{0.05cm}...\hspace{0.05cm},e_{m})}{\mathbb{E}[a_{\mathbf{i}}]}.
\end{equation}
Note \(\mathbb{E}[a_{\mathbf{i}}]\) depends solely on the unerased edges of \(\mathbf{i}.\) Say \(\mathbf{i}\) has \(s\) \textit{components} if there are contiguous segments with endpoints \(((l_t,r_t))_{1 \leq t \leq s},\) called the \textit{split points of} \(\mathbf{i},\) giving the erased edges, i.e., \(s\) is minimal such that 
\begin{equation}\label{zeroededges}
    \{q: 1 \leq q \leq m, e_{q}=0\}=\cup_{1 \leq t \leq s}{\{l_t+1,l_t+2,\hspace{0.05cm}...\hspace{0.05cm},r_t\}},
\end{equation}
treat the remaining \(s\) paths with endpoints \(((r_i,l_{i+1}))_{1 \leq i \leq s}\) as a graph (\(l_{s+1}:=l_1\)), denoted by \(cut(\mathbf{i}),\) with edge and vertex labels set (equivalent to fixing \(((l_i,r_i))_{1 \leq i \leq s}\)) and inherited from the bigger cycle in which it is embedded. Furthermore, refer to \((i_j)_{j \in \cup_{1 \leq t \leq s}\{l_{t}+1,\hspace{0.05cm}...\hspace{0.05cm},r_t-1\}}\) as its \textit{interior vertices}, and say erased edges in \(\mathbf{i}\) do not to belong to or do not appear in \(\mathbf{i}_c,\) whereas naturally the rest belong to or appear in it. For instance, when \(n=6, (l_1,r_1,l_2,r_2)=(1,3,4,5),\) \(cut(\mathbf{i})\) is a union of two paths, \((i_3,i_4),(i_5,i_0,i_1),\) and most of the terminology used for cycles remains valid: 
the sole difference is that vertex and edge positions are inherited from \(\mathbf{i}\) (e.g., \((i_3,i_4)\) is the fourth edge, not the first: this is done to bypass relabelings and cumbersome notation).
\par
Since
\begin{equation}\label{rel}
    \mathbb{E}[a_{\mathbf{i}}]=p^{\sum_{1 \leq t \leq s}{(r_t-l_t)}}\mathbb{E}[\prod_{q: 1 \leq q \leq m, e_q=1}{a_{i_{q-1}i_{q}}}]:=p^{\sum_{1 \leq t \leq s}{(r_t-l_t)}} \cdot \mathbb{E}[a_{cut(\mathbf{i})}],
\end{equation}
the contribution of lacunary cycles with fixed split points is analyzed next.

\begin{proposition}\label{propqs}
There is \(c>0\) such that if \(w>0, \tilde{p} \geq n^{w-1},\) then lacunary cycles with \(s\) components, split points and interior vertices fixed, and \(2q, 2q+1\) unerased edges contribute
\begin{equation}\label{laccyc1}
   n^{q+s}(1-O(\frac{q^2}{n}))\tilde{p}^{m-q}(1-\tilde{p})^q C_q \cdot [C_q^{-1}\prod_{1 \leq j \leq s}{C_{m_j}}+O(\frac{q^2}{n^w})+
   O(\frac{q^{2/3}}{(\tilde{p}n^{1-w})^{1/3}})],
\end{equation}
\begin{equation}\label{laccyc2}
   O(n^{q+s}\tilde{p}^{m-q-1}(1-\tilde{p})^qC_{q}),
\end{equation}
in (\ref{trlac}), respectively, where \((2m_j)_{1 \leq j \leq s}\) are the lengths of the \(s\) components of \(cut(\mathbf{i}_c)\)  (\(C_l=0\) for \(l \not \in \mathbb{Z}\)), \(q^2 \leq c\min{(\tilde{p}n^{1-w},n^w)}\) in (\ref{laccyc1}) and \(q \leq c(n\tilde{p})^{1/4}\) in (\ref{laccyc2}).    
\end{proposition}

\par
\textit{Proof of (\ref{laccyc1}):}
In light of (\ref{rel}) and \(\sum_{1 \leq t \leq s}{(r_i-l_i)}=m-2q,\) (\ref{laccyc1}) is tantamount to
\begin{equation}\label{sub231}
   \sum{\mathbb{E}[a_{cut(\mathbf{i})}]}=n^{q+s}(1-O(\frac{q^2}{n}))(\tilde{p}(1-\tilde{p}))^q C_q \cdot [C_q^{-1}\prod_{1 \leq j \leq s}{C_{m_j}}+O(\frac{q^2}{n^w})+
   O(\frac{q^{2/3}}{(\tilde{p}n^{1-w})^{1/3}})], 
\end{equation}
where the summation is over lacunary cycles \(\mathbf{i}\) as described in Proposition~\ref{propqs}. Let \(\mathbf{i}\) be such an object, and for ease of notation, \(\mathbf{i}_c:=cut(\mathbf{i})\) (i.e., the original cycle with the erased edges left out). 
Repeat the analysis from subsection~\ref{1.1} for \(\mathbf{i}_c:\) begin marking edges and their right endpoints at \(r_1,\) let \(2o:=|\mathcal{O}|,\) and recall that
\[\sum_{k \geq 1}{kn_k}=q+o.\]
\par
\underline{Step 1:} For any \(h, 1 \leq h \leq 2q,\) there are at least as many marked edges as unmarked among the first \(h\) edges: hence the rationale and consequently the number of possibilities encompassed by (\ref{sigmamo}), employed in subsections~\ref{1.1} and \ref{1.2}, continues to hold here as well (despite \(\mathbf{i}_c\) being likely a disconnected graph).
\par
\underline{Step 2:} Idem.
\par
\underline{Step 3:} Any vertex not belonging to \(\{i_{r_q}, 1 \leq q \leq s\}\) is marked at least once: for \(v \not \in \{i_{l_q},i_{r_q}, 1 \leq q \leq s\},\) argue in the same vein as for cycles, while for \(v \in \{i_{l_q}, 1 \leq q \leq s\}-\{i_{r_q}, 1 \leq q \leq s\},\) the first appearance of \(v\) in \(\mathbf{i}_c\) is the right endpoint of an edge which must be marked. Hence \(n(0) \leq s\) vertices are unmarked, and
\[|V(\mathbf{i}_c)|=n(0)+\sum_{1 \leq i \leq k}{n_k}.\]
\par
\underline{Step 4:} Anew solely the right endpoints of the unmarked edges are yet to be chosen. In the classical case (cycles with no erased edges), each vertex \(v \in N_{\mathbf{i}}(k)\) generates a factor of 
\[\begin{cases}
    1, \hspace{1.2cm} k \leq 1\\
    (2k)^k, \hspace{0.5cm} k \geq 2:
\end{cases}\]
in the current case, \(v \in N_{\mathbf{i}}(k)\) yields a factor of 
\[(2k+r(v))^{k+r(v)/2}, \hspace{0.5cm} k \geq 2,\] 
where
\begin{equation}\label{defrv}
    r(v):=|\{q: 1 \leq q \leq s,i_{r_q}=v\}, \hspace{0.5cm} v \in \{1,2,\hspace{0.05cm}...\hspace{0.05cm},n\}.
\end{equation}
To see this, note lemma \(1\) from \cite{oldpaper} can be modified by weakening the key inequality in its proof, 
\[a_{j+1} \leq (a_j+l_j+2)-(u_j+l_j)=a_j+2-u_j\]
to
\[a_{j+1} \leq (a_j+l_j+2)-(u_j+l_j-q_j)=a_j+2-u_j+q_j\]
for \(q_j=|\{t: e_{t+1}=0,n_j \leq t<n_{j+1}\}|\)
(i.e., \(q_j\) counts the erased edges between the \(j^{th}\) and \((j+1)^{th}\) appearances of \(v\)). This entails \(2k\) can be replaced by \(2k+r(v),\) and the power by  
\begin{equation}\label{bdknew}
    k+r(v)/2:
\end{equation} 
reason as below (\ref{bdstep444}); apart from \(\alpha(v),\beta(v),\gamma(v),\delta(v),\) let \(D=\{k:e_k=0\},\) and
\[\xi_1(v)=|\{t: i_t=v,(t,t+1) \in D\times M\}|, \hspace{0.8cm} \xi_2(v)=|\{t: i_t=v,(t,t+1) \in D\times U\}|,\]
\[\xi_3(v)=|\{t: i_t=v,(t,t+1) \in M\times D\}|,\hspace{0.8cm} \xi_4(v)=|\{t: i_t=v,(t,t+1) \in U\times D\}|;\]
then the number of interest is 
\[\alpha(v)+\delta(v)+\xi_2(v) \leq \alpha(v)+\gamma(v)+\xi_3(v)+\frac{\xi_1(v)+\xi_2(v)}{2}=k+r(v)/2\] since \(v\) is incident with at least as many marked edges as unmarked, 
\[\alpha(v)+\beta(v)+2\gamma(v)+\xi_1(v)+\xi_3(v) \geq \alpha(v)+\beta(v)+2\delta(v)+\xi_2(v)+\xi_4(v).\]
The overall bound for \(\cup_{k \geq 2}{N_\mathbf{i}(k)}\) becomes 
\begin{equation}\label{bdstep484}
    \prod_{(v,k): k \geq 2, v \in N_\mathbf{i}(k)}{(2k+r(v))^{k+r(v)/2}}.
\end{equation}
\par
In the classical case (i.e., cycles), there is no choice for vertices marked once (so long as they are not the first vertex) or the first vertex (conditional on not being marked): that is, the elements of \(N_{\mathbf{i}}(0) \cup (N_{\mathbf{i}}(1)-\{i_0\})\) can be discarded. In the current context, this might not be true.


\begin{lemma}\label{lemma1}
Suppose \(\mathbf{i}_c\) has no edge of multiplicity \(1.\) Then for \(\alpha \in \{0,1\}, v \in N_{\mathbf{i}_c}(\alpha),\) there are at most
\[\alpha+r(v)+\chi_{r(v)>0}\]
more marked edges than unmarked preceding an unmarked edge \((v,*),\) where \(r(v)\) is given by (\ref{defrv}).
\end{lemma}

\begin{proof}
\underline{\(\alpha=1\)}: 
Initially suppose that \(vv\) does not appear in \(\mathbf{i}_c.\) A simple observation, used implicitly in the first two cases below, \(v \not \in \{i_{l_j},i_{r_j}, 1 \leq j \leq s\}\) and \(v \in \{i_{l_j}, 1 \leq j \leq s\}-\{i_{r_j}, 1 \leq j \leq s\},\) is that
no unerased edge \((*,v)\) is also of type \((v,*).\) 
\par
Let \(t_d<...<t_{d_1+1} \leq l_1<r_1 \leq t_1<t_2<\hspace{0.05cm}...\hspace{0.05cm}<t_{d_1}\) be all the appearances of \(v\) in \(\mathbf{i}_c,\) that is
\[\{t_1,t_2,\hspace{0.05cm}...\hspace{0.05cm},t_d\}=\{t: 0 \leq t \leq m-1, i_{t}=v, e_t+e_{t+1}>0, e_0:=e_m\}.\] 
\par
Take \(v \not \in \{i_{l_j},i_{r_j}, 1 \leq j \leq s\}:\) an almost identical rationale to the one for the classical case applies, and since edges now can have odd multiplicity, a slight modification is needed.
This entails \(t_1>r_1:\)
\((i_{t_1-1},i_{t_1})\) is marked, and thus \(((i_{t_k-1},i_{t_k}))_{2 \leq k \leq d}\) are unmarked. For any \(0 \leq h \leq 2q,\) among the first \(h\) edges of \(\mathbf{i}_c\) there are at least as many marked edges incident with \(v\) as unmarked, entailing that in \(\mathbf{i}_c,\) there are either equally many or two more marked than unmarked (the sum of these two numbers is \(2d,\) which is even). The latter case entails no unmarked edge of the type \((v,*)\) exists, while in the former situation, the other unmarked edge is \((i_{t_d},i_{t_d+1})\) due to \((i_{t_z},i_{t_z+1})\) being unmarked implying \((i_{t_y-1}i_{t_y},i_{t_y}i_{t_y+1})_{1 \leq y \leq z}\) can be arranged in pairs of identical edges (there are as many marked as unmarked edges among these \(2z\) edges since their signs, given by the mapping in step \(1,\) are
\[(+,+),(-,+),(-,+),\hspace{0.05cm}...\hspace{0.05cm},(-,+),(-,-)),\] 
whereby \(z=d\) or \((i_{t_{z+1}-1},i_{t_{z+1}})\) is marked, the second option impossible due to \(v \in N_{\mathbf{i}_c}(1);\) since the signs are as above, the only unmarked edge of the type \((v,*)\) has its marked counterpart fully determined by the edges preceding it, entailing the claim.  
\par
Consider now \(v \in \{i_{l_j}, 1 \leq j \leq s\}-\{i_{r_j}, 1 \leq j \leq s\}.\) In this situation,
\(((i_{t_z-1},i_{t_z}))_{1 \leq z \leq d}\) appear in \(\mathbf{i}_c,\) \(d-1\) of them are unmarked, whereas at least one edge among \(((i_{t_z},i_{t_z+1}))_{1 \leq z \leq d}\) is erased. Reasoning as in the previous paragraph gives the difference between the number of marked and unmarked edges is at most \(1\) (one of the edges that could be marked does not belong to \(\mathbf{i}_c,\) implying the difference is strictly smaller than \(2\)). When the difference is \(1,\) there is no unmarked edge of the type \((v,*):\) suppose next it is \(0.\) Then \(v\) is incident with an even number of edges in \(\mathbf{i}_c,\) which can occur solely when
\begin{equation}\label{ineqv2}
   |\{q: 1 \leq q \leq s, i_{l_q}=v\}|=2: 
\end{equation}
the set on the left-hand side has size positive and even, and if it is at least \(3,\) 
then there are at most \(1+(d-3)=d-2<d-1\) marked edges, absurd. 
(\ref{ineqv2}) gives that \(2d-2\) edges in \(\mathbf{i}_c\) are incident with \(v\) and the ones unmarked are among \(((i_{t_z-1},i_{t_z}))_{1 \leq z \leq d},\) entailing no edge of the type \((v,*)\) is unmarked. 
\par
Take \(v \in \{i_{r_j}, 1 \leq j \leq s\}.\) The primary difference between this situation and the cases above is that the absence of some edges among \(((i_{t_z-1},i_{t_z}))_{1 \leq z \leq d}\) can lead to too few unmarked edges, causing the prior arguments to fail. 
Consider the following sequence of sets. Let \(T_1\) consist of the indices \(t \in \{t_1-1,t_1\}\) for which \((i_t,i_{t+1})\) belongs to \(\mathbf{i}_c,\) and when \(|T_1|=2, i_{t_1-1}=i_{t_1+1},\) take instead \(T_1=\emptyset.\) For \(z \geq 2,\) construct \(T_{z}\) from \(T_{z-1}\) as follows: initially let \(T_z=T_{z-1}-\{t\}\) for \((i_t,i_{t+1})\) the marked counterpart of \((i_{t_z-1},i_{t_z})\) when such \(t\) exists and \(T_z=T_{z-1}\) otherwise (i.e., when \((i_{t_z-1},i_{t_z})\) is marked or erased); subsequently for \(t \in \{t_z-1,t_z\},\) add \(t\) to \(T_{z}\) if \((i_{t},i_{t+1})\) is marked. Note
\begin{equation}\label{growth}
    |T_{z+1}|>|T_z|
\end{equation}
entails \((i_{t_{z}-1},i_{t_{z}})\) is the edge marked jointly with \(v,\) or
\((i_{t_{z}},i_{t_{z}+1})\) is marked with \((i_{t_{z}-1},i_{t_{z}})\) not appearing in \(\mathbf{i}_c.\) Furthermore, \(|T_z|\) is the difference between the number of marked and unmarked edges incident with \(v\) and preceding \((i_{t_z},i_{t_z+1}).\)
\par
Let \(1 \leq z_1<z_2<...<z_y \leq d\) be the indices \(\xi\) with \(1 \leq \xi \leq d,\) \(t_\xi>0,\) \((i_{t_\xi-1},i_{t_\xi})\) erased, and suppose \((i_{t_{\xi_0}-1},i_{t_{\xi_0}})\) is the edge marked jointly with \(v.\)  It is shown next via induction on \(\beta\) that for \(1 \leq \beta \leq y,\) 
\begin{equation}\label{finaltz}
    |T_{z_\beta}| \leq 1+\beta+\chi_{z_\beta >\xi_0}.
\end{equation}
\par
For the base case \(\beta=1,\) either \(z_1=1,\) in which case the inequality is clear, or \(z_1 \geq 2\) with 
\par
\((a)\) \(t_1>r_1,\) whereby \(\xi_0=1,\) all edges preceding \((i_{t_{z_1}-1},i_{t_{z_1}})\) belong to \(\mathbf{i}_c\) and have signs
\[(+,*),(-,*),\hspace{0.05cm}...\hspace{0.05cm},(-,*),\]
which gives \(|T_{z_1-1}| \leq 2\) (in light of the observation above on \(|T_z|\)), and \(|T_{z_1}| \leq |T_{z_1-1}|+1\) from \((i_{t_z-1},i_{t_z})\) being erased, or
\par
\((b)\) \(t_1=r_1,\) all the edges preceding \((i_{t_{z_1}-1},i_{t_{z_1}})\) appear in \(\mathbf{i}_c\) and have signs
\[(+),(s_1,*),\hspace{0.05cm}...\hspace{0.05cm},(s_q,*),\]
with all but at most one of \((s_j)_{1 \leq j \leq q}\) being minus, in turn rendering 
\[|T_{z_1-1}| \leq 1+\chi_{z_1-1 \geq \xi_0}\] 
and thus \(|T_{z_1}| \leq |T_{z_1-1}|+1 \leq 2+\chi_{z_1-1 \geq \xi_0}=2+\chi_{z_1> \xi_0}\) (\((i_{t_{z_1}-1},i_{t_{z_1}})\) is erased).
\par
Suppose next \(\beta \geq 2:\) \(((i_{t_j-1},i_{t_j}))_{z_{\beta-1}<j<z_{\beta}}\) appear in \(\mathbf{i}_c\) with at most one of them marked, and \((i_{t_{z_\beta}-1},i_{t_{z_{\beta}}})\) is erased, whereby
\[|T_{z_{\beta}}|\leq |T_{z_{\beta-1}}|+1+\chi_{z_{\beta-1}<\xi_0<z_\beta} \leq (1+\beta-1+\chi_{z_{\beta-1}> \xi_0})+1+\chi_{z_{\beta-1}<\xi_0<z_\beta}=1+\beta+\chi_{z_{\beta}>\xi_0}\]
(there are at most two new marked edges with yet no unmarked counterparts, \((i_{t_{\xi_0}-1},i_{t_{\xi_0}}),(i_{t_{z_\beta}},i_{t_{z_\beta}+1}),\) and \(\xi_0 \not \in \{z_1,z_2,\hspace{0.05cm}...\hspace{0.05cm},z_y\}\)), completing the induction step. 
\par
Note
\[\max_{1 \leq w \leq d}{|T_w|} \leq \max{(2,\max_{1 \leq \beta \leq y}{|T_{z_{\beta}}|})}:\]
for \(z<\max{(z_1,\xi_0)},\) the analysis above yields \(|T_z| \leq 1+\chi_{z \geq \xi_0},\) while for \(z_l<z<z_{l+1},1 \leq l \leq y-1,z>\xi_0,\) \(|T_z| \leq |T_{z-1}|\) in virtue of the necessary condition for (\ref{growth}). This inequality and (\ref{finaltz}) entail
\begin{equation}\label{btzzz}
    \max_{1 \leq w \leq d}{|T_w|} \leq 2+y,
\end{equation}
concluding the justification of the lemma for \(\alpha=1\) and \(vv\) not belonging to \(\mathbf{i}_c.\) 
\vspace{0.2cm}
\par 
Lastly, suppose \(vv\) appears in \(\mathbf{i}_c.\) Then \(v \in N_{\mathbf{i}_c}(1)\) and \(r(v)>0:\) \(v \in \cup_{k \geq 2}{N_{\mathbf{i}_c}(k)}\) when \(m(vv) \geq 3,\) and otherwise \(m(vv)=2, v \in N_{\mathbf{i}_c}(1),\) which can solely occur when \(r(v)>0\) (otherwise the first edge incident with \(v\) is marked jointly with it and is not \(vv,\) implying \(v \in \cup_{k \geq 2}{N_{\mathbf{i}_c}(k)}\)).
Construct the above sets \((T'_z)\) for the lacunary cycle \(\mathbf{i}'_c\) obtained from \(\mathbf{i}_c\) after deleting the two copies of \(vv,\) and employ the same reasoning. The indicator in (\ref{finaltz}) is always \(0\) because \(v\) is not marked in \(\mathbf{i}'_c,\) whereby (\ref{btzzz}) is strict for \((T'_z)\) and so
\[\max_{1 \leq w \leq d}{|T_w|} \leq 1+\max_{1 \leq w \leq d}{|T'_w|} \leq 1+(1+y)=2+y,\] 
completing the case \(\alpha=1.\)

\vspace{0.2cm}
\par
\underline{\(\alpha=0\)}: The rationale above continues to hold (notice that in this situation, \(v \in \{i_{r_q}, 1 \leq q \leq s\}).\) As there is no edge marked jointly with \(v,\) the indicator in (\ref{finaltz}) is always \(0,\) making the inequality (\ref{btzzz}) strict.
\end{proof}

Consider now vertices in \(\cup_{\alpha \in \{0,1\}}{N_{\mathbf{i}_c}(\alpha)},\) and let 
\[m_{\alpha j}=|\{v \in N_{\mathbf{i}_c}(\alpha): |\{t: 1 \leq t \leq s, i_{r_t}=v\}|=j\}|, \hspace{0.4cm} \alpha \in \{0,1\}, j \in \mathbb{N}.\] 
Lemma~\ref{lemma1} entails
alike vertices contribute in this step at most
\begin{equation}\label{bdstep498}
    \prod_{j \geq 1}{j^{jm_{0j}/2}} \cdot \prod_{j \geq 1}{(2+j)^{(1+j/2)m_{1j}}}\ \leq e^{\sum_{j \geq 1}{(j-1)m_{0j}}+\sum_{j \geq 1}{(j+1)m_{1j}}}\prod_{j \geq 1}{(j!)^{m_{0j}}} \cdot \prod_{j \geq 1}{((1+j)!)^{m_{1j}}}
\end{equation}
using (\ref{bdknew}) together with
\[j^{j/2} \leq j^j \leq j!e^{j-1},\]
for \(j \geq 1\) via \(\frac{(j+1)^{j+1}}{(j+1)!} \cdot \frac{j!}{j^j}=(1+\frac{1}{j})^j \leq e,\) and
\[\frac{(2+j)^{1+j/2}}{(1+j)!e^{j+1}} \leq \frac{(2+j)^{1+j/2}}{e(1+j)^{1+j}}=(1+\frac{1}{1+j})^{1+j/2}e^{-1}(1+j)^{-j/2} \leq e \cdot e^{-1}=1, \hspace{0.5cm} j \geq 1.\]
Induction on \(T\) yields that for \(T,l_1,l_2,\hspace{0.05cm}...\hspace{0.05cm},l_T \in \mathbb{N},\)
\begin{equation}\label{factorial}
    \prod_{1 \leq t \leq T}{l_t!} \leq (\sum_{1 \leq t \leq T}{l_t}-T+1)!
\end{equation}
(for \(T \geq 3\) employ the induction hypothesis in conjunction with \((l_1+...+l_{T-1}-(T-1)+1)+l_T-1=l_1+...+l_T-T+1,\) \(T=1\) is clear, and \(T=2\) follows from 
\[\frac{(a+b-1)!}{a! \cdot b!}=\frac{(a+1)(a+2)...(a+b-1)}{(1+1)(1+2)...(1+b-1)} \geq 1, \hspace{0.4cm} a,b \in \mathbb{N}).\]
Three applications of (\ref{factorial}) 
yield
\begin{equation}\label{bdstep4}
    \prod_{j \geq 1}{(j!)^{m_{0j}}} \cdot \prod_{j \geq 1}{((1+j)!)^{m_{1j}}} \leq (1-n(0)+\sum_{j \geq 1}{jm_{0j}})! \cdot (1+\sum_{j \geq 1}{jm_{1j}})! \leq (1-n(0)+\sum_{j \geq 1,\alpha \in \{0,1\}}{jm_{\alpha j}})!
\end{equation}
insomuch as
\begin{equation}\label{jalphanm}
    n(0)=\sum_{j \geq 1}{m_{0j}},\hspace{0.4cm} \sum_{j \geq 1}{m_{1j}} \leq s-n(0), \hspace{0.4cm} \sum_{j \geq 1,\alpha \in \{0,1\}}{jm_{\alpha j}} \leq s.
\end{equation}
The final bound for this step becomes, by virtue of (\ref{bdstep484}),  (\ref{bdstep498}), (\ref{bdstep4}),
\[e^{\sum_{j \geq 1}{(j-1)m_{0j}}+\sum_{j \geq 1}{(j+1)m_{1j}}}\cdot(1-n(0)+\sum_{j \geq 1,\alpha \in \{0,1\}}{jm_{\alpha j}})! \cdot \prod_{(v,k): k \geq 2, v \in N_\mathbf{i}(k)}{(2k+r(v))^{k+r(v)/2}}.\]
\par
\underline{Step 5:} Let \(e(\mathbf{i}_c)\) be the number of pairwise distinct undirected edges appearing in \(\mathbf{i}_c:\) 
\[e(\mathbf{i}_c) \geq \sum_{k \geq 1}{n_k}-s+n(0)\] 
via the same rationale as for cycles (now the correction is given by \(s-n(0),\) the number of marked vertices in \(\{i_{r_w}, 1 \leq w \leq s\}).\) Together with \(e(\mathbf{i}_c) \leq q\) and (\ref{boundtilde}), this gives
\begin{equation}\label{boundexp}
    |\mathbb{E}[\Tilde{a}_{\mathbf{i}}]| \leq \tilde{p}^{e(\mathbf{i}_c)}(1-\tilde{p})^{2q} \cdot (\frac{1}{1-\tilde{p}})^{e(\mathbf{i_c})} \leq \tilde{p}^{-s+n(0)+\sum_{k \geq 1}{n_k}}(1-\tilde{p})^q.
\end{equation}
For the analogue of Lemma~\ref{l1}, the exact reasoning gives
\begin{equation}\label{ineq20}
    \sum_{k \geq 2}{kn_k} \geq 4o-T-n(\mathbf{i}), \hspace{0.5cm} \sum_{k \geq 2}{kn_k} \geq 3T-n(\mathbf{i}), \hspace{0.5cm} \sum_{k \geq 2}{(k-1)n_k} \geq 2o-n(\mathbf{i})
\end{equation}
for
\[n(\mathbf{i}):=|N_{\mathbf{i}}(1) \cap \{i_{r_j}, 1 \leq j \leq s\}|,\]
since \(v \in N_{\mathbf{i}}(1) \cap \{i_{r_j}, 1 \leq j \leq s\}\) is incident with at most one edge \(e \in \mathcal{O}\) with a copy marked jointly with \(v,\) the only edges for which the argument in the proof of the lemma breaks down (the possible exception was \(i_0i_1\) for cycles; this is replaced now by edges of multiplicity \(3\) with their first copy having as left endpoint an element of \(\{i_{r_j}, 1 \leq j \leq s\}\)). Hence
\begin{equation}\label{anl1}
    \sum_{k \geq 2}{kn_k} \geq \frac{3}{4} \cdot (4o-T-n(\mathbf{i}))+\frac{1}{4} \cdot (3T-n(\mathbf{i}))=3o-n(\mathbf{i}).
\end{equation}
Both (\ref{ineq20}) and (\ref{anl1}) as well as \(n(\mathbf{i}) \leq s-n(0)\) will be used shortly.
\par
The bound for the contribution of \(\mathbf{i}_c\) becomes, using steps \(1-5,\) \(\sum_{k \geq 1}{n_k}=q+o-\sum_{k \geq 2}{(k-1)n_k},\)
\[[\binom{2q-1}{q-o}-\binom{2q-1}{q-o-2}] \cdot n^{q} (\tilde{p}(1-\tilde{p}))^{q} \cdot \sum_{(n_k)_{k \geq 1}}{\tilde{p}^{n(0)-s}n^{n(0)} \cdot \tilde{p}^{o-\sum_{k \geq 2}{(k-1)n_k}} \cdot}\]
\[\cdot n^{o-\sum_{k \geq 2}{(k-1)n_k}} \cdot \frac{(q+o)!}{\prod_{k \geq 1}{(k!)^{n_k}}} \cdot \frac{1}{\prod_{k \geq 1}{n_k!}} \prod_{k \geq 2}{(2k)^{kn_k}} \cdot e^{\sum_{j \geq 1}{(j-1)m_{0j}}+\sum_{j \geq 1}{(j+1)m_{1j}}} \cdot \]
\begin{equation}\label{bounddee}
    \cdot (1-n(0)+\sum_{j \geq 1,\alpha \in \{0,1\}}{jm_{\alpha j}})! \prod_{(k,v): k \geq 2, v \in N_{\mathbf{i}_c}(k)}{(1+\frac{r(v)}{2k})^{k} \cdot (2k+r(v))^{r(v)/2}}.
\end{equation}
Since for \(x \in \mathbb{R},\) \(x+1 \leq \exp(x),\)
\[\prod_{(k,v): k \geq 2, v \in N_{\mathbf{i}_c}(k)}{(1+\frac{r(v)}{2k})^{k}} \leq \prod_{(k,v): k \geq 2, v \in N_{\mathbf{i}_c}(k)}{\exp(\frac{r(v)}{2})} \leq \exp(\frac{s-n(0)}{2}) \leq 2^{s-n(0)},\]
\[\prod_{(k,v): k \geq 2, v \in N_{\mathbf{i}_c}(k)}{(2k+r(v))^{r(v)}} \leq (2q)^{\sum_{(k,v): k \geq 2, v \in N_{\mathbf{i}_c}(k)}{r(v)}} \leq (2q)^{s-\sum_{j \geq 1,\alpha \in \{0,1\}}{jm_{\alpha j}}}\]
and in light of (\ref{jalphanm}),
\[e^{\sum_{j \geq 1}{(j-1)m_{0j}}+\sum_{j \geq 1}{(j+1)m_{1j}}} \cdot (1-n(0)+\sum_{j \geq 1,\alpha \in \{0,1\}}{jm_{\alpha j}})! \leq e^{s-2n(0)+\sum_{j \geq 1, \alpha \in \{0,1\}}{jm_{\alpha j}}} \cdot (1+s-2n(0)+\sum_{j \geq 1,\alpha \in \{0,1\}}{jm_{\alpha j}})!,\]
yielding the product of the last three factors in (\ref{bounddee}) is at most
\[2^{s-n(0)}\max_{x \in [n(0),s] \cap \mathbb{Z}}{[e^{s-2n(0)+x} \cdot (1+s-2n(0)+x)! \cdot (2q)^{s-x}]} \leq \]
\[\leq 2^{s-n(0)}(3e)^{s-n(0)} \cdot e^{s-n(0)} \cdot (1+s-n(0))! \cdot (2q)^{s-n(0)} \leq (2 \cdot 3e^2 \cdot 2(s-n(0)) \cdot 2q)^{s-n(0)} \leq (256qs)^{s-n(0)}\]
using that for \(x \leq s-1,\)
\[\frac{e^{s-2n(0)+x+1} \cdot (1+s-2n(0)+x+1)!(2q)^{s-x-1}}{e^{s-2n(0)+x} \cdot (1+s-2n(0)+x)!(2q)^{s-x}} \leq \frac{e(1+s-2n(0)+s)}{2q} \leq \frac{3es}{2q} \leq \frac{3es}{s}=3e\]
(\(\mathbf{i}_c\) has \(s\) components and \(2q\) unerased edges, whereby \(s \leq 2q\)), and \((k+1)! \leq (2k)^k.\)
\par
Thus (\ref{bounddee}) can be replaced by
\[[\binom{2q-1}{q-o}-\binom{2q-1}{q-o-2}] \cdot n^{q} (\tilde{p}(1-\tilde{p}))^{q} \cdot \sum_{(n_k)_{k \geq 1}}{\tilde{p}^{n(0)-s}n^{n(0)}n^{wn(\mathbf{i})}  \cdot n^{o-2ow}\tilde{p}^o \cdot}\]
\[\cdot (256qs)^{s-n(0)} \cdot \tilde{p}^{-\sum_{k \geq 2}{(k-1)n_k}} \cdot n^{-(1-w)\sum_{k \geq 2}{(k-1)n_k}} \cdot \frac{(q+o)!}{\prod_{k \geq 1}{(k!)^{n_k}}} \cdot \frac{1}{\prod_{k \geq 1}{n_k!}} \cdot \prod_{k \geq 2}{(2k)^{kn_k}}\]
with the aid of (\ref{ineq20}), which gives
\[\sum_{k \geq 2}{(k-1)n_k} \geq w\cdot (2o-n(\mathbf{i}))+(1-w) \cdot \sum_{k \geq 2}{(k-1)n_k}.\]
Via the rationale ensuing (\ref{intbound}) and the replacement of (\ref{bounddee}) above, the contribution of a given \(o\) is at most 
\[2 \cdot [\binom{2q-1}{q-o}-\binom{2q-1}{q-o-2}] \cdot n^{q+s} (p(1-p))^{q} \cdot ((4eq)^{2}n^{-w})^o \cdot\]
\[\cdot \max_{n(0),n(\mathbf{i})}{[(256qs)^{s-n(0)}\tilde{p}^{n(0)-s}n^{n(0)-s}n^{wn(\mathbf{i})} \cdot (\frac{2e(q+o)}{(\tilde{p}n^{1-w})^{1/2}})^{-2n(\mathbf{i})/3}]}.\]
The maximum can be replaced by \(1\) since \(n(\mathbf{i}) \leq s-n(0)\) and \(\frac{2e(q+o)}{(\tilde{p}n^{1-w})^{1/2}} \leq \frac{4eq}{(\tilde{p}n^{1-w})^{1/2}} \leq 1,\) from which
\[(256qs)^{s-n(0)}\tilde{p}^{n(0)-s}n^{n(0)-s}n^{wn(\mathbf{i})} \cdot (\frac{2e(q+o)}{(\tilde{p}n^{1-w})^{1/2}})^{-2n(\mathbf{i})/3} \leq\]
\begin{equation}\label{decay}
    \leq [(512q^2)^{-1} \cdot \tilde{p}n^{1-w} \cdot \frac{(\tilde{p}n^{1-w})^{1/3}}{(2eq)^{2/3}}]^{n(0)-s} \leq (\frac{\tilde{p}n^{1-w}}{128q^2})^{4(n(0)-s)/3} \leq 1.
\end{equation}
Therefore, the range \(o>0\) can be absorbed in the second term of the desired identity (\ref{laccyc1}), and anew the largest contribution corresponds to \(o=0,\) which is looked at next. 
\par
When each component (i.e., the paths contained in \(\mathbf{i}_c\) with endpoints given by \((r_t,l_{t+1})_{1 \leq t \leq s}, l_{s+1}:=l_1\)) belongs to \(\cup_{l \in \mathbb{N}}{\mathcal{C}(l)},\) any two edge disjoint, the contribution is
\[n^{q+s}(1-O(\frac{q^2}{n}))(\tilde{p}(1-\tilde{p}))^q C_{m_1}...C_{m_s},\]
using \(q^2 \leq cn^{1/2},\) and (\ref{exppeq}) for \(m=q.\) If \(n(0)<s,\) then a factor of \((\frac{128q^2}{\tilde{p}n^{1-w}})^{4/3} \leq (\frac{128q^{2}}{\tilde{p}n^{1-w}})^{1/3}\) can be added in virtue of (\ref{decay}): else \(i_{r_q}=i_{l_{q+1}}\) for all \(1 \leq q \leq s\) (\(o=0\) and \(|\{i_{r_q}, 1 \leq q \leq s\}|=s\) entail 
\[\{i_{l_q}, 1 \leq q \leq s\}=\{i_{r_q}, 1 \leq q \leq s\},\] 
and if \(q_0\) is minimal with \(i_{l_{q_0+1}} \ne i_{r_{q_0}},\) then \(i_{l_{q_0+1}} \in \{i_{r_q}, 1 \leq q \leq s\}\) is marked, absurd). If \(\sum_{k \geq 2}{n_k}>0,\) then the bound can be multiplied with 
\(O(\frac{q^{2/3}}{(\tilde{p}n^{1-w})^{1/3}})\)
(in light of the rationale succeeding (\ref{intbound}), especially (\ref{expbound})). Otherwise, each undirected edge has multiplicity \(2\) with two of the \(s\) cycles (\(i_{r_q}=i_{l_{q+1}}\) for all \(q\)) sharing an edge: two such components can be merged into one by deleting two edges (see (\ref{paste1}) and (\ref{paste2})), and thus, by induction on \(q\) for (\ref{sub231}), the contribution is at most
\[8s^2q^2 \cdot p(1-p)\cdot C C_{q-1}n^{(q-1)+(s-1)}(p(1-p))^{q-1}=8s^2q^2n^{-2} \cdot CC_{q-1}n^{q+s}(p(1-p))^{q}\leq CC_qn^{q+s}(p(1-p))^{q}\]
using \(8s^2q^2 \leq 32q^4 \leq 32c^2\tilde{p}n \leq 32c^2n\) (\(2s^2(2q)^2\) accounts for the sizes of the preimages: \(s^2\) corresponds to the positions of the segments that were glued into one, and \(2q \cdot 2q \cdot 2\) to the separation of one of the segments into two cycles by selecting the position of the shared edge, the first vertex of the second cycle, and its orientation). The desired result follows.

\vspace{0.4cm}
\par
\textit{Proof of (\ref{laccyc2}):} 
Similarly to the even case, after discounting 
the erased edges, the bound becomes
\[O(n^{q+s}(\tilde{p}(1-\tilde{p}))^qC_q).\]
Employ the same marking procedure as in the proof of (\ref{laccyc2}), let now \(2o+1=|\mathcal{O}|,\) and proceed as in subsection~\ref{1.2} with the aid of (\ref{ineqq}).

\begin{lemma}\label{lemma388}
    If \(\mathbf{i}_c\) has no edge of multiplicity \(1,\) then
    \begin{equation}\label{ineqq}
        -o+(s-1)-\sum_{v \in \{i_{r_j}: 1 \leq j \leq s\}}{\chi_{v \in N_{\mathbf{i}_c}(0)}}+\sum_{k \geq 1}{(k-1)n_k} \geq \frac{1}{2}\sum_{k \geq 1}{(k-1)n_k},
    \end{equation}
    unless \(\mathbf{i}_c\) is a union of \(s\) vertex disjoint paths of the type \((u,\mathcal{L}_1,u,v,\mathcal{L}_2,v,u,\mathcal{L}_3,u,v,\mathcal{L}_4,v)\) with \(u\ne v,\) each of the cycles 
    \[(u,\mathcal{L}_1,u),(v,\mathcal{L}_2,v),(u,\mathcal{L}_3,u),(v,\mathcal{L}_4,v)\] 
    either has length \(0\) or belongs to \(\cup_{q \in \mathbb{N}}{\mathcal{C}(q)},\) and no two have nontrivial vertex intersections.
\end{lemma}

\begin{proof}
Note that
\[-o+(s-1)-\sum_{v \in \{i_{r_j}: 1 \leq j \leq s\}}{\chi_{v \in N_{\mathbf{i}_c}(0)}}+\sum_{k \geq 1}{(k-1)n_k}=s-\frac{1}{2}-\sum_{v \in \{i_{r_j}: 1 \leq j \leq s\}}{\chi_{i_{r} \in N_{\mathbf{i}_c}(0)}}+(-o-\frac{1}{2}+\sum_{k \geq 1}{(k-1)n_k})=\]
\[=s-\frac{1}{2}-\sum_{v \in \{i_{r_j}: 1 \leq j \leq s\}}{\chi_{v \in N_{\mathbf{i}_c}(0)}}+\sum_{k \geq 1, v \in N_{\mathbf{i}}(k)}{(k-1-\frac{c(v)}{2})},\]
where \(c(v)\) is the number of last marked copies of elements of \(\mathcal{O}\) marked jointly with \(v\) (that is, for each \(e \in \mathcal{O},\) consider solely the last marked copy of \(e,\) and \(c(v)\) counts such directed edges whose right endpoint is \(v\)). For \(v \in N_{\mathbf{i}}(k), v \not \in \{i_{r_j}, 1 \leq j \leq s\},\) 
\[c(v) \leq k-1\]
(the first edge \(e\) marked jointly with \(v\) is the first appearance of \(e,\) and all edges of odd multiplicity are marked at least twice), 
and clearly \(c(i_{r_j}) \leq k_0\) when \(i_{r_j} \in N_{\mathbf{i}}(k_0),\) whereby
\[-o+(s-1)-\sum_{v \in \{i_{r_j}: 1 \leq j \leq s\}}{\chi_{v \in N_{\mathbf{i}_c}(0)}}+\sum_{k \geq 1}{(k-1)n_k}=s-\frac{1}{2}-\sum_{v \in \{i_{r_j}: 1 \leq j \leq s\}}{\chi_{v \in N_{\mathbf{i}_c}(0)}}+\sum_{k \geq 1, v \in N_{\mathbf{i}}(k)}{(k-1-\frac{c(v)}{2})}\geq\]
\begin{equation}\label{lastineq2}
    \geq (s-\frac{1}{2})-\sum_{1 \leq j \leq s}{(\chi_{i_{r_j} \in N_{\mathbf{i}_c}(0)}+\frac{c(i_{r_j})}{2} \cdot \chi_{i_{r_j} \in N_{\mathbf{i}_c}(1)}+\frac{1}{2} \cdot \chi_{i_{r_j} \in N_{\mathbf{i}_c}(k_j), c(i_{r_j})=k_j>1})}+\sum_{k \geq 2, v \in N_{\mathbf{i}}(k)}{\frac{k-1}{2}}.
\end{equation}
\par
Since for all \(j,\)
\[\chi_{i_{r_j} \in N_{\mathbf{i}_c}(0)}+\frac{c(i_{r_j})}{2} \cdot \chi_{i_{r_j} \in N_{\mathbf{i}_c}(1)}+\frac{1}{2} \cdot \chi_{i_{r_j} \in N_{\mathbf{i}_c}(k_j), c(i_{r_j})=k_j>1} \leq 1,\]
the desired inequality ensues when \(|\{i_{r_j}, 1 \leq j \leq s\}|<s\) or \(|\{i_{r_j}, 1 \leq j \leq s\} \cap N_{\mathbf{i}_c}(0)|<s\) because in these cases at least one of the summands in the first sum is at most \(\frac{1}{2}\) and \(s-1+\frac{1}{2}=s-\frac{1}{2}.\) Suppose next
\[|\{i_{r_j}, 1 \leq j \leq s\}|=s, \hspace{0.8cm} R(\mathbf{i}_c):=\{i_{r_j}, 1 \leq j \leq s\}= N_{\mathbf{i}_c}(0).\]
Then 
\[(s-\frac{1}{2})-\sum_{1 \leq j \leq s}{(\chi_{i_{r_j} \in N_{\mathbf{i}_c}(0)}+\frac{c(i_{r_j})}{2} \cdot \chi_{i_{r_j} \in N_{\mathbf{i}_c}(1)}+\frac{1}{2} \cdot \chi_{i_{r_j} \in N_{\mathbf{i}_c}(k_j), c(i_{r_j})=k_j>1})}=-\frac{1}{2},\]
and it suffices to show \(c(v)=k-1\) for all \(v \in N_{\mathbf{i}}(k), k \geq 1\) occurs solely in the configurations described in the statement of the lemma (this would imply (\ref{lastineq2}) is strict in any other scenario, yielding the conclusion). The following observation will be used repeatedly: the existence of an edge marked jointly with \(v\in N_{\mathbf{i}}(k), k \geq 1\) that is neither the last copy of an element of \(\mathcal{O},\) nor the first edge marked jointly with \(v\) entails \(c(v)<k-1\) (there are exactly \(k-1-c(v)\) positions that yield such edges). Such a configuration must have no edge of multiplicity at least \(4:\) when an edge \(e_0\) has \(m(e_0) \geq 4,\) the vertex \(v_0 \in N_{\mathbf{i}}(k_0)\) marked jointly with the second marked copy of \(e_0\) has \(c(v_0)<k_0-1\) insomuch as the aforesaid copy is neither the first edge marked jointly with \(v_0\) (\(v_0\) is marked, whereby \(v_0 \not \in R(\mathbf{i}_c),\) and for alike vertices the first edge incident with \(v_0\) is marked jointly with it), nor the last marked copy of \(e_0\) when \(e_0 \in \mathcal{O}\) (\(m(e_0) \geq 5\) in this case, yielding at least \(3\) marked copies of it). Furthermore, edges with equal endpoints are also prohibited (if \(v_0v_0\) appears among the edges, then its first copy is neither the first marked edge jointly with \(v_0\) nor the last marked copy of an edge of odd multiplicity).
\par
It is proved next that any lacunary cycle \(\mathbf{i}_c\) with
\par
\((i)\) all edges of multiplicity \(2\) or \(3,\) 
\par
\((ii)\) no edge with equal endpoints, 
\par
\((iii)\) all right endpoints \(\{i_{r_j}, 1 \leq j \leq s\}\) unmarked, pairwise distinct, 
\par
\((iv)\) \(|\mathcal{O}| \geq 1,\) and  
\par
\((v)\) \(c(w)=k-1\) for all \(w \in N_{\mathbf{i}_c}(k),k \geq 1 \newline\) 
is of the type described in the lemma statement: this provides the desired conclusion in light of the two observations above. Suppose for the sake of contradiction that there exists a lacunary cycle \(\mathbf{i}_c\) that satisfies \((i)-(v)\) but does not have the structure aforementioned, and take one of minimal length \(m_0>0.\)
\par
If there is an edge \(uv\) with \(m(uv)=2,\) then its copies belong either to the same segment or to distinct ones. In the latter case, the two segments can be rearranged such that the two copies are consecutive: change
\((S_1,u,v,S_2),(S_3,u,v,S_4)\) 
to 
\[(S_1,u,v,u,\overline{S}_3), (\overline{S}_2,v,S_4),\] 
and
\((S_1,u,v,S_2),(S_3,v,u,S_4)\) 
to 
\[(S_1,u,v,u,S_4),
(\overline{S}_2,v,\overline{S}_3),\] 
where 
\[\overline{(u_1,u_2,\hspace{0.05cm}...\hspace{0.05cm},u_k)}:=(u_k,u_{k-1},\hspace{0.05cm}...\hspace{0.05cm},u_1).\]
This yields a contradiction: consider the lacunary cycle \(\mathbf{i}'_c\) obtained by rearranging the two segments aforesaid and leaving out \((u,v,u).\) 
\(\mathbf{i}'_c\) has at most \(s\) components, whereby
\begin{equation}\label{ineqqr}
    |V(\mathbf{i}'_c)| \leq |R(\mathbf{i}'_c)|+|\mathcal{O}(\mathbf{i}'_c)|+|\mathcal{E}(\mathbf{i}'_c)| \leq s+|\mathcal{O}(\mathbf{i}_c)|+|\mathcal{E}(\mathbf{i}_c)|-1=|V(\mathbf{i}_c)|-1
\end{equation}
using that any \(w \not \in R(\mathbf{i}'_c)\) is marked at least once, and the first edge \(e\) incident with \(w \not \in R(\mathbf{i}'_c)\) is marked jointly with it and is the first copy of \(e\) from the justification of (\ref{unmarked2}) (\(\mathcal{E}(\mathbf{j})=\{e \in \mathbf{j}: 2|m(e)\}\)). Since \(V(\mathbf{i}_c)-V(\mathbf{i}'_c) \subset \{v\},\) all inequalities in (\ref{ineqqr}) are equalities, entailing \(\mathbf{i}'_c\) has \(s\) components, absurd since in this case \(V(\mathbf{i}'_c)=V(\mathbf{i}_c)\) (\(v\) appears in \(\mathbf{i}'_c\) as well because it belongs to the second component of the rearrangement aforesaid).
\par
In the former case, either the segment which contains them can be rearranged such that the two copies of \(uv\) are consecutive (change \((S_1,u,v,S_2,u,v,S_3)\) to \((S_1,u,v,u,\overline{S}_2,v,S_3)\)), rendering a contradiction by discarding anew \((u,v,u)\) (this new lacunary cycle \(\mathbf{i}'_c\) also satisfies (\ref{ineqqr}), and both inequalities must be equalities: this gives \(v \not \in V(\mathbf{i}'_c),\) and \(\mathbf{i}'_c\) continues to have \(s\) components, whereby \((i)-(v)\) hold despite \(\mathbf{i}'_c\) having \(m_0-2 \in (0,m_0)\) edges), or the two copies of \(uv\) have different orientations: this second scenario is also impossible. Suppose the copies were \((u,v),(v,u),\) and let \(k_0 \in \mathbb{N}\) satisfy \(v \in N_{\mathbf{i}_c}(k_0);\) \(v\) is marked jointly with \(uv,2|m(uv)=2,\) whereby \(uv\) must be the first edge with which \(v\) is incident (else \(c(v)<k_0-1\)); because \(v\) is marked, it does not belong to \(R(\mathbf{i}_c),\) and the second time \(v\) appears in \(\mathbf{i}_c\) is marked (the corresponding edge is the first copy of an edge \(e_0\) insomuch as it is not a copy of \(uv\)), contradicting \(c(v)=k_0-1.\) 
\par
Suppose now all edges in \(\mathbf{i}_c\) have multiplicity \(3.\) Note \(m_0>3:\) else \(s=1\) (from \((iii)\)), and so \(\mathbf{i}_c=(u,v,u,v,u)\) for \(u \ne v,\) absurd.
Consider the undirected graph formed from \(\mathbf{i}_c\) by discarding duplicate edges: this either is a forest or contains a cycle.
\par
In the first case, there is a vertex \(v\) connected to solely one edge \(uv\) (a leaf). 
\par
\((a)\) If \(v \not \in \{i_{l_j},i_{r_j}, 1 \leq j \leq s\},\) then it is incident with an even number of edges (since \(vv\) does not belong to \(\mathbf{i}_c\)), whereby \(2|m(uv)=3,\) absurd. 
\par
\((b)\) If \(v \in \{i_{l_j}, 1 \leq j \leq s\},\) then the three copies of \(uv\) must be \(\{(u,v,u,v)\},\{(u,v,u),(u,v)\},\) or \(\{(u,v),(u,v),(u,v)\}.\) Consider the lacunary cycle \(\mathbf{i}'_c\) with these three edges left out, and note the number of components can only decrease (any cluster of copies of \(uv\) must have \(v\) one of the endpoints because \(v\) is incident only to \(uv\)). It is shown \(\mathbf{i}'_c\) also satisfies \((i)-(v),\) which suffices to derive a contradiction
(its length is \(m_0-3 \in (0,m_0)\)). Note \(V(\mathbf{i}_c)-V(\mathbf{i}'_c) \subset \{u,v\}:\) suppose \(u \in V(\mathbf{i}_c)-V(\mathbf{i}'_c).\) This entails the segments containing \(uv\) should contain no other edge, and thus (\ref{ineqqr}) can be refined to
\[|V(\mathbf{i}'_c)| \leq |R(\mathbf{i}'_c)|+|\mathcal{O}(\mathbf{i}'_c)|+|\mathcal{E}(\mathbf{i}'_c)| \leq s-1+|\mathcal{O}(\mathbf{i}_c)|+|\mathcal{E}(\mathbf{i}_c)|-1=|V(\mathbf{i}_c)|-2,\]
entailing equality occurs, from which \(u,v\) do not appear in \(\mathbf{i}'_c,\) which inherits the properties \((i)-(v).\) Take now \(u \not \in V(\mathbf{i}_c)-V(\mathbf{i}'_c),\) whereby (\ref{ineqqr}) becomes equality because \(\{v\} \subset V(\mathbf{i}_c)-V(\mathbf{i}'_c)\) from \(v\) being incident solely with \(uv:\) reasoning as below (\ref{ineqq}) implies \(\mathbf{i}'_c\) has exactly \(s\) components, \(V(\mathbf{i}'_c)=V(\mathbf{i}_c)-\{v\},\)
and \(|R(\mathbf{i}'_c)|=s.\) Clearly \((i),(ii),(iv)\) hold for \(\mathbf{i}'_c,\) \((iii)\) ensues from \(R(\mathbf{i}'_c)=R(\mathbf{i}_c),\) while \((v)\) follows due to the function \(c\) not changing and \(u\) not being marked jointly with \(uv\) in \(\mathbf{i}_c\) (otherwise \(v \in R(\mathbf{i}_c)\) since \(v\) is not incident with any edge \(e' \ne uv,\) which gives \(|R(\mathbf{i}'_c)| \leq s-1,\) absurd).
\par
\((c)\) If \(v \in \{i_{r_j}, 1 \leq j \leq s\},\) then \(uv\) must the first edge marked jointly with \(u\) (\(v\) being a leaf implies two copies of \(uv\) are marked jointly with \(u\)). The clusters of \(uv\) are \(\{(v,u),(u,v,u)\}\) since \(|R(\mathbf{i}_c)|=s;\) if \((v,u)\) appears before \((u,v,u)\), then this is the first time \(u\) shows up in \(\mathbf{i}_c,\) and all subsequent edges marked jointly with \(u\) must be last copies; this cannot occur unless \(u \in R(\mathbf{i}_c)\) since there is another edge incident with \(u\) preceding \((u,v,u),\) which is a first copy of an edge, absurd; else \((u,v,u)\) appears before \((u,v),\) entailing anew that either \(u \in R(\mathbf{i}_c)\) or the first edge marked jointly with it is not \(uv,\) absurd. Lastly, note \(u \not \in  R(\mathbf{i}_c)\) due to \(u\) being marked.
\par
In the second case, suppose \(v_1,v_2, \hspace{0.05cm}... \hspace{0.05cm},v_{k_0}\) form a cycle of minimal length: these vertices are pairwise distinct and none belongs to \(R(\mathbf{i}_c)\) (suppose \(v_1 \in R(\mathbf{i}_c):\) then \(v_1v_2\) is the first edge incident with \(v_2\) because two copies of it are marked jointly with \(v_2,\) and by induction on \(r,\) \(v_{r}v_{r+1}\) is the first edge incident with \(v_{r+1},\) entailing \(v_{k_0+1}=~v_1\) is marked, absurd). It is shown next that for some \(k' \leq k_0,s' \leq s,\)
\begin{center}
\(\mathbf{i}_c\) can be rearranged into a collection of cycles \((\mathcal{L}_i)_{1 \leq i \leq k'},\) paths \((\mathcal{S}_j)_{1 \leq j \leq s'},\) and 
\end{center}
\begin{center}
\(\mathcal{L}_{cyc},\) the cycle \((v_1,v_2,\hspace{0.05cm}...\hspace{0.05cm},v_k)\) with the loops \((v_j,v_{j+1},v_j)\) attached at \(v_j\) for \(1 \leq j \leq k,\) 
\end{center}
\begin{center}
such that for all \(1 \leq i \leq k',\) \(\mathcal{L}_i\) contains no element of \(R(\mathbf{i}_c),\) is vertex disjoint with 
\end{center}
\begin{center}
\((\mathcal{L}_{i'})_{1 \leq i' \leq k', i' \ne i},(\mathcal{S}_j)_{1 \leq j \leq s'},\) and has the first vertex in \(\{v_1,v_2, \hspace{0.05cm}... \hspace{0.05cm},v_{k_0}\}.\)     
\end{center}
This suffices to derive a contradiction: mark the vertices in a new lacunary cycle \(\mathbf{i}'_c\) obtained by reading \(\mathcal{L}_{cyc},\) \((\mathcal{S}_j)_{1 \leq j \leq s'},\) and \((\mathcal{L}_i)_{1 \leq i \leq k'}\) in this order (for \(1 \leq r \leq k',\) take \(v_{j_r}\) to be the first vertex in \(\mathcal{L}_r,\) and arbitrary orientations for \((\mathcal{S}_j)_{1 \leq j \leq s}, (\mathcal{L}_i)_{1 \leq i \leq k'}).\) Then
\[|V(\mathbf{i}'_c)| \leq k_0+\sum_{k \geq 1}{n''_k}+\sum_{1 \leq q \leq s'}{\chi_{i_{r'_q} \in N_{\mathbf{i}'_c(0)}}} \leq k_0+\frac{(m_0-3k_0)-(|\mathcal{O}|-k_0)}{2}+s' \leq \frac{m-|\mathcal{O}|}{2}+s=|V(\mathbf{i}_c)|,\]
where \(n''_k=|N_{\mathbf{i}'_c}(k)-\{v_1,v_2,\hspace{0.05cm}...\hspace{0.05cm},v_{k_0}\}|,\) and \((i_{r'_q})_{1 \leq q \leq s'}\) are the right endpoints of \((\mathcal{S}_j)_{1 \leq j \leq s'},\) using that 
\[\frac{(m_0-3k_0)-(|\mathcal{O}|-k_0)}{2} \geq |\mathcal{E}(\mathbf{i}'_c)|+|\mathcal{O}(\mathbf{i}'_c)-\{v_1v_2,v_2v_3,\hspace{0.05cm}...\hspace{0.05cm},v_{k_0}v_1\}|,\]
(via the injective mapping from \(V(\mathbf{i}'_c)-R(\mathbf{i}'_c)\) to \(\mathcal{E}(\mathbf{i}'_c) \cup \mathcal{O}(\mathbf{i}'_c)\) that takes a vertex \(v\) to the first undirected edge with which it is marked jointly). Hence equality occurs above (\(V(\mathbf{i}'_c)=V(\mathbf{i}_c)\)), whereby \(s'=s\) and \((i_{r'_q})_{1 \leq q \leq s'}\) are unmarked and pairwise distinct: this entails the lacunary cycle \(\mathbf{i}''_c\) consisting of \((\mathcal{S}_j)_{1 \leq j \leq s'}\) satisfies \((i)-(v),\) contradicting the minimality of \(m_0\) since \((v)\) can be justified via the following inequalities that hold for any lacunary cycles that satisfy \((i),(ii),(iii),\)
\[|\mathcal{O}(\mathbf{i}_c)| \leq \sum_{u}{c(u)} \leq \sum_{k \geq 1}{(k-1)n_k} \leq \sum_{k \geq 1}{kn_k}-(|V(\mathbf{i}_c)|-|R(\mathbf{i}_c)|),\]
\[\sum_{k \geq 1}{kn_k}-(|V(\mathbf{i}_c)|-|R(\mathbf{i}_c)|) \leq 2|\mathcal{O}(\mathbf{i}_c)|+|\mathcal{E}(\mathbf{i}_c)|-(|\mathcal{O}(\mathbf{i}_c)|+|\mathcal{E}(\mathbf{i}_c)|)=|\mathcal{O}(\mathbf{i}_c)|.\]
Note \(\mathbf{i}''_c\) is nonempty because it contains the elements of \(R(\mathbf{i}_c),\) and the edges incident with them.
\par
Return now to the proof of the claim on the rearrangement of \(\mathbf{i}_c.\) Since each edge has multiplicity \(3,\) the lacunary cycle can be rearranged such that two of the copies of \(v_rv_{r+1}\) have consecutive positions for all \(1 \leq r \leq k_0\) (\(v_{k_0+1}:=v_1),\) without enlarging the number of segments (proceed as in the case \(m(uv)=2\) above using two copies that share orientation). By putting aside the loops of length \(2\) and deleting the remaining copies of \(v_1v_2,v_2v_3,\hspace{0.05cm}...\hspace{0.05cm},v_{k_0}v_1,\) the rest can be rearranged into \(s' \leq s\) segments and cycles with first vertices among \(\{v_1,v_2,\hspace{0.05cm}...\hspace{0.05cm},v_{k_0}\}\): for \(1 \leq r \leq k_0,\) cut in two the segment to which the third copy of \(v_{r}v_{r+1}\) belongs by leaving out this edge; at the end of the process, there are \(s+k_0\) segments (some potentially empty), and their endpoints (including multiplicities) are \(v_1,v_1,v_2,v_2,\hspace{0.05cm}...\hspace{0.05cm},v_{k_0},v_{k_0},i_{l_1},i_{l_2},\hspace{0.05cm}...\hspace{0.05cm},i_{l_s},i_{r_1},i_{r_2},\hspace{0.05cm}...\hspace{0.05cm},i_{r_s}.\) 
For each \(1 \leq j \leq k_{0},\) if there are two segments with endpoints \(v_j,\) join them, and else do nothing. The final configuration consists of \(s\) segments with endpoints \(i_{l_1},i_{l_2},\hspace{0.05cm}...\hspace{0.05cm},i_{l_s},i_{r_1},i_{r_2},\hspace{0.05cm}...\hspace{0.05cm},i_{r_s},\) and \(k' \leq k\) loops, each containing at least one vertex among \(v_1,v_2,\hspace{0.05cm}...\hspace{0.05cm},v_{k_0}\) (suppose a resulting segment had endpoints \(v_r,u, u \ne v_r;\) if solely one segment had one of its endpoints \(v_r,\) then this would be the sole segment containing \(v_r\) and so no other vertex in the multiset would belong to it; else the two segments with endpoints \(v_r\) were not joined because \(v_r\) is the endpoint of a segment, absurd; hence the outcomes are loops containing vertices among \(v_1,v_2,\hspace{0.05cm}...\hspace{0.05cm},v_{k_0}\) or segments with endpoints among \(i_{l_1},i_{l_2},\hspace{0.05cm}...\hspace{0.05cm},i_{l_s},i_{r_1},i_{r_2},\hspace{0.05cm}...\hspace{0.05cm},i_{r_s}\)). 
Now repeat another procedure until no change is produced: if there are loops that share a vertex with any of the remaining segments or loops, then attach the loop of this type with smallest index to the corresponding segment or loop with which it shares a vertex and treat it as a part of it (i.e., reduce the number of loops by \(1\) and throw away its index). This process must end in a finite amount of steps as each iteration shrinks the number of loops by \(1:\) the final graph consists of \(s' \leq s\) segments (some could be empty) and \(k'' \leq k'\) loops, no loop sharing any vertex with any of the remaining loops and segments (in particular, these final loops do not have vertices in \(R(\mathbf{i}_c)\) as the latter are the endpoints of the segments).
\end{proof}

\par
Claim (\ref{laccyc2}) can be now concluded: the lacunary cycles for which (\ref{ineqq}) holds can be treated as in the proof of Proposition~\ref{prop2}. The rest consist of \(s\) paths \(((u_r,\mathcal{L}_{1r},u_r,v_r,\mathcal{L}_{2r},v_r,u_r,\mathcal{L}_{3r},u_r,v_r,\mathcal{L}_{4r},v_r))_{1 \leq r \leq s}\) with 
\((u_r,\mathcal{L}_{1r},u_r),(v_r,\mathcal{L}_{2r},v_r),(u_r,\mathcal{L}_{3r},u_r),(v_r,\mathcal{L}_4,v_r) \in \cup_{q \in \mathbb{N}}{\mathcal{C}(q)} \cup \{(u_r),(v_r), 1 \leq r \leq s\}.\) These cycles contribute at most
\[n^{2s} \cdot n^{(2q+1-3s)/2} \cdot (p(1-p))^{(2q+1-3s)/2+s} \cdot |1-2p|^s \sum_{l_1+l_2+...+l_s=(2q+1-3s)/2}{C_{l_1}C_{l_2}...C_{l_s}} \leq\]
\[\leq n^{q+(1+s)/2} (p(1-p))^{q+(1-s)/2} C_{q-(s-1)/2} \leq \frac{1}{2}n^{q+s}(p(1-p))^qC_q \cdot (2np(1-p))^{-(s-1)/2}=O(n^{q+s}(p(1-p))^qC_q)
\]
via
\[2np(1-p)=2n\tilde{p}(1-\tilde{p}) \geq n\tilde{p} \geq n^w \geq 1, \hspace{0.6cm} C_{k+t} \geq 2^{t-1}C_k, \hspace{0.6cm}\sum_{l_1+l_2+...+l_t=k}{C_{l_1}C_{l_2}...C_{l_t}} \leq C_{k+t-1},\]
the last two inequalities following by induction on \(t \in \mathbb{Z}_{\geq 0},\) and \(C_{k+1}=\sum_{l_1+l_2=k}{C_{l_1}C_{l_2}} \geq (1+\chi_{k>0})C_k.\)
\par
The proofs of (\ref{laccyc1}) and (\ref{laccyc2}) are therefore complete.

\subsection{Even Powers of \(A\)}\label{1.4}

Having pinned down the dominant components for lacunary cycles with fixed split points (Proposition~\ref{propqs}), proceed with even powers of \(A.\)

\begin{proposition}\label{proptra}
There exist universal \(c,C>0\) such that if \(w>0, \tilde{p} \geq n^{w-1}, m^2 \leq c\min{(n^w,\tilde{p}n^{1-w})},\) then
\begin{equation}\label{Amom}
    \mathbb{E}[tr(A^{2m})] \geq (np)^{2m}+2m \cdot (np)^{2m-1}(1-p)(1-\frac{Cm^2}{np})+C_m n^{m+1}(p(1-p))^m(1-\frac{2m^2}{n}),
\end{equation}
\begin{equation}\label{Amom2}
      \mathbb{E}[tr(A^{2m})] \leq (np)^{2m}+2m \cdot (np)^{2m-1}(1-p) (1+\frac{Cm^2}{np})+C_m n^{m+1}(p(1-p))^m(1+C \cdot (\frac{m^2}{\tilde{p}n^{1-w}})^{1/3}).  
\end{equation}   
Furthermore,
\begin{equation}\label{Amom2odd}
      \mathbb{E}[tr(A^{2m+1})] \leq (np)^{2m+1}+(2m+1) \cdot (np)^{2m}(1-p) (1+\frac{Cm^2}{np})+C \cdot C_m n^{m+1}(p(1-p))^m(1+\frac{m^2}{(\tilde{p}n)^{1/2}}),  
\end{equation} 
\[(1-2p) \cdot (\mathbb{E}[tr(A^{2m+1})]-(np)^{2m+1}-(2m+1) \cdot (np)^{2m}(1-p)[1+O(\frac{m^2}{np})]) \geq\]
\begin{equation}\label{Amomodd}
    \geq C_{m-1} n^{m}(p(1-p))^{m-1}(1-2p)^2 \cdot (1-\frac{2(m-1)^2}{n}).
\end{equation}
\end{proposition}

\par
Notice that all terms in both (\ref{Amom}) and (\ref{Amom2}) are necessary: the first always dominate the second (\(c \leq 2^{-2},\) whereby \(\tilde{p}n \geq (2\sqrt{c})^{4/3},\) and \(np \geq \tilde{p}n \geq 2\sqrt{c} \cdot (\tilde{p}n)^{1/4} \geq 2m \geq 2m(1-p)\)), while the relation between them and the third depends on the size of \(m\) (the two orderings change at \(m \propto \frac{\log{n}}{\log{\frac{np}{1-p}}}\)). A notable difference between even and odd powers consists in the third terms in their corresponding lower and upper bounds: for the former these are tight, whereas for the latter they are not, the lower bound being likely the more accurate of the two. 
\par
\begin{proof}[Proof of (\ref{Amom}), (\ref{Amom2})]
Recall the decomposition (\ref{trlac}),
\[\mathbb{E}[tr(A^{2m})]=\sum_{\mathbf{i}=(i_0,i_1,i_2,\hspace{0.05cm}...\hspace{0.05cm},i_{2m-1},i_0,e_1,e_2,\hspace{0.05cm}...\hspace{0.05cm},e_{2m})}{\mathbb{E}[a_{\mathbf{i}}]},\]
where the summation is over lacunary cycles of length \(2m.\) Consider first the extremal lacunary cycles (i.e., \(e_1=...=e_{2m}\)). If no edge is erased (\(e_1=...=e_{2m}=1\)), then the contribution is \(\mathbb{E}[tr(\Tilde{A}^{2m})],\) given by (\ref{evenmom}), 
while if all edges are erased (\(e_1=...=e_{2m}=0\)), then \((np)^{2m}\) arises. It remains to see what occurs when there is a mix of both categories: it is shown that the more erased edges, the larger the overall contribution; in particular, the second term in (\ref{Amom}) comes from cycles with all but two consecutive edges erased. Besides, as the moments of the centered trace are of interest, obtaining the primary contributors from random cycles (i.e., \(e_1+...+e_{2m}>0\)) is crucial.
\par
\underline{Case 1:} Suppose a lacunary cycle has \(s \geq 1\) components, \(2q+1\) unerased edges, and fixed split points. If \(q=0,\) then all expectations are \(0;\) assume next \(q \geq 1.\) The interior vertices of the erased edges (with the notation in (\ref{zeroededges}), their positions are the elements of \(\cup_{1 \leq t \leq s}{\{l_{t}+1,\hspace{0.05cm}...\hspace{0.05cm},r_{t}-1\}}\)) contribute \(n^{2m-2q-1-s}\) (there are \(2m-2q-1-s\) hidden vertices, that is \(|\cup_{1 \leq t \leq s}{\{l_{t}+1,\hspace{0.05cm}...\hspace{0.05cm},r_{t}-1\}}|=2m-2q-1-s\), and these are arbitrary elements of \(\{1,2, \hspace{0.05cm} ... \hspace{0.05cm},n\}\)). Proposition~\ref{propqs} yields the contribution is
\[O(n^{q+s}p^{2m-q-1}C_{q+1}(1-p)^q) \cdot n^{2m-2q-1-s}=O((np)^{2m-q-1}(1-p)^qC_{q+1}).\]
\par
The number of lacunary cycles (up to vertex isomorphisms) with \(E\) unerased edges and \(s\) components is at most
\[(2m)^s \cdot \binom{E-1}{s-1}\]
because there are \(\binom{(E-s)+s-1}{s-1}=|\{(x_1,x_2, \hspace{0.05cm} ... \hspace{0.05cm},x_s):x_1+...+x_s=E, x_i \in \mathbb{N}\}|\) possibilities for the lengths of the segments and at most \((2m)^s\) ways to choose the positions of the left endpoints (elements of \(\{0, 1, \hspace{0.05cm} ... \hspace{0.05cm},2m-1\}\)), from which the overall contribution for \(q,s\) fixed is
\[O((np)^{2m-q-1}(1-p)^qC_{q+1}) \cdot (2m)^s\binom{2q}{s-1}.\]
Using
\[\sum_{s \leq 2q}{(2m)^s\binom{2q}{s-1}}=2m \cdot [(2m+1)^{2q}-(2m)^{2q}] \leq 2q(2m+1)^{2q},\] 
for given \(q,\) the final bound is
\begin{equation}\label{bound1qerased}
    O((np)^{2m-q-1}(1-p)^qqC_{q+1}) \cdot (2m+1)^{2q}.
\end{equation}
\par
\underline{Case 2:} Suppose a lacunary cycle has \(s \geq 1\) components, \(2q\) unerased edges, \(1 \leq q \leq m-1,\) and fixed split points. Similarly to \underline{Case 1}, the interior vertices add a factor of \(n^{2m-2q-s},\) whereby in conjunction with Proposition~\ref{propqs} the contribution is
\[O(n^{q+s}p^{2m-q}(1-p)^{q}C_{q+s}) \cdot n^{2m-2q-s}=O(C_{q+s}(np)^{2m-q}(1-p)^{q})\]
from 
\[\sum_{m_1+...+m_s=m,m_i \geq 0}{C_{m_1}...C_{m_s}} \leq C_{m+s-1}.\]
Arguing in the same vein as in \underline{Case 1}, the overall contribution is
\[O(C_{q+s}) \cdot (np)^{2m-q}(1-p)^q \cdot (2m)^s\binom{2q-1}{s-1},\]
and summing over \(s \leq 2q\) yields
\begin{equation}\label{bound2qerased}
    O(qC_{3q}) \cdot (np)^{2m-q}(1-p)^q \cdot (2m+1)^{2q-1}.
\end{equation}
When \(q=1,\) the dominant graphs are even cycles of length \(2\) with \(s=1,\) and give the second terms in both (\ref{Amom}) and (\ref{Amom2}):
each individual expectation is \(p^{2m-1}(1-p),\) the graphs with \(s=2\) contribute at most 
\[2 \cdot (2m)^2n^{2m-4+2} \cdot p^{2m-1}(1-p)=2m \cdot (np)^{2m-1}(1-p) \cdot \frac{4m}{n} \leq 2m \cdot (np)^{2m-1}(1-p) \cdot \frac{m^2}{np},\] 
whereas those with \(s=1\) yield 
\[2m \cdot n^{2m-3+2} \cdot p^{2m-1}(1-p)=2m \cdot (np)^{2m-1}(1-p).\] 
In light of (\ref{bound1qerased}) and (\ref{bound2qerased}), the remainder generates \(2m \cdot (np)^{2m-1}(1-p) \cdot O(\frac{m^2}{np})\) since \(m^2 \leq c \cdot \tilde{p}n^{1-w} \leq c \cdot np,\) 
\begin{equation}\label{domm1}
    \frac{(np)^{2m-q-1}qC_{q+1}(2m+1)^{2q}(1-p)^q}{2m \cdot (np)^{2m-1}(1-p)}=\frac{qC_{q+1}(2m+1)^{2q}(1-p)^{q-1}}{2m \cdot(np)^{q}} \leq (\frac{8 \cdot 9m^2}{np})^{q},
\end{equation}
\begin{equation}\label{domm2}
    \frac{(np)^{2m-q}(1-p)^{q}qC_{3q}(2m+1)^{2q-1}}{2m \cdot (np)^{2m-1}(1-p)}=\frac{(1-p)^{q-1}qC_{3q}(2m+1)^{2q-1}}{2m \cdot(np)^{q-1}} \leq 48 \cdot (\frac{64 \cdot 9m^2}{np})^{q-1}
\end{equation}
from \(C_l \leq 4^l, q \leq 2^{q-1}, 2m+1 \leq 3m.\) 
\end{proof}

\begin{proof}[Proof of (\ref{Amom2odd}), (\ref{Amomodd})]
Begin with (\ref{Amom2odd}): the extremal lacunary cycles (\(e_{1}=e_2=...=e_{2m+1}\)) yield the first and third terms, and the second ensues via an identical rationale to the one above (for the remainder of the lacunary cycles, solely the parity of the number of unerased edges matters, implying replacing \(2m\) by \(2m+1\) suffices).
\par
Take now (\ref{Amomodd}): the argument for (\ref{Amom2odd}) implies the contribution of lacunary cycles with at least one erased edge is
\[(np)^{2m+1}+(2m+1) \cdot (np)^{2m}(1-p) \cdot [1-O(\frac{m^2}{np})].\]
The remainder is \(tr(\tilde{A}^{2m+1}):\) a consequence of (\ref{boundtilde}) is
\[(1-2p)^{2m+1}\mathbb{E}[tr(\tilde{A}^{2m+1})]=\sum_{\mathbf{i}=(i_0,i_1,\hspace{0.05cm}...\hspace{0.05cm},i_{2m},i_0)}{|\mathbb{E}[\tilde{a}_i]|}\]
as shown immediately after its statement, and the inequality ensues from the contributions of the cycles that are elements of \(\mathcal{C}(m-1)\) with a three loops of length \(1\) attached at the first vertex: i.e., cycles
\[\mathbf{i}=(i_0,i_0,i_0,i_1,\hspace{0.05cm}...\hspace{0.05cm},i_{2m-3},i_0), \hspace{0.5cm} (i_0,i_1,\hspace{0.05cm}...\hspace{0.05cm},i_{2m-3},i_0) \in \mathcal{C}(m-1),\]
and Proposition~\ref{prop1}.
\end{proof}

\subsection{Moments of \(tr(A^{2m})-\mathbb{E}[tr(A^{2m})]\)}\label{1.5}

This subsection justifies (\ref{convhighmom}) via the results on the dominant components of the trace expectations in subsections~\ref{1.1}-\ref{1.4} with \(n^w=(\tilde{p}n)^{1/2}.\) 
\par
Let \(l \geq 2:\) 
\begin{equation}\label{Ahighmom}
    \mathbb{E}[(tr(A^{2m})-\mathbb{E}[tr(A^{2m})])^l]=\sum_{(\mathbf{i}_1,\mathbf{i}_2, \hspace{0.05cm} ... \hspace{0.05cm},\mathbf{i}_l)}{\mathbb{E}[(a_{\mathbf{i}_1}-\mathbb{E}[a_{\mathbf{i}_1}]) \cdot (a_{\mathbf{i}_2}-\mathbb{E}[a_{\mathbf{i}_2}]) \cdot ... \cdot (a_{\mathbf{i}_l}-\mathbb{E}[a_{\mathbf{i}_l}])]},
\end{equation}
with \(\mathbf{i}_1,\mathbf{i}_2, \hspace{0.05cm} ... \hspace{0.05cm},\mathbf{i}_l\) lacunary cycles of length \(2m.\) Construct a simple undirected graph \(\mathcal{G}\) with vertices \(1,2,\hspace{0.05cm}...\hspace{0.05cm},l,\) in which \(ab \in E(\mathcal{G})\) if only if \(\mathbf{i}_a, \mathbf{i}_b\) share an edge that is unerased in both and \(a \ne b.\) The contributors to (\ref{Ahighmom}) are tuples for which all connected components of \(\mathcal{G}\) have size at least two (else the expectation is \(0\) by independence), and the contributions of the components are independent as no two share an edge. High moments are treated in \cite{sinaisosh} via gluing cycles along shared edges. Two cycles \(\mathbf{i},\mathbf{j}\) with a common edge are merged into a cycle \(\mathcal{P}\) of length \(4m,\) by employing the first shared edge \(e\) as a bridge from one to the other: let \(i_{t-1}i_{t}=j_{s-1}j_{s}\) with \(t,s\) minimal in this order (i.e., \(t=\min{\{1 \leq k \leq 2m, \exists 1 \leq q \leq 2m, i_{k-1}i_{k}=j_{q-1}j_{q}\}},\newline s=\min{\{1 \leq q \leq 2m, j_{q-1}j_{q}=i_{t-1}i_{t}\}}\)). Walk along \(\mathbf{i}\) up to \(i_{t-1}i_{t},\) use it as a bridge to switch to \(\mathbf{j},\) traverse all of it, and get back to \(\mathbf{i}\) upon returning to \(j_{s-1}j_s=i_{t-1}i_{t}:\) specifically, if \((i_{t-1},i_{t})=(j_{s},j_{s-1}),\) then
\begin{equation}\label{paste1}
    \mathcal{P}:=(i_0, \hspace{0.05cm} ... \hspace{0.05cm},i_{t-1},i_t,j_{s-2}, \hspace{0.05cm} ... \hspace{0.05cm},j_0,j_{2m-1}, \hspace{0.05cm} ... \hspace{0.05cm}, j_{s},j_{s-1},i_{t+1}, \hspace{0.05cm} ... \hspace{0.05cm},i_{2m});
\end{equation}
else \((i_{t-1},i_{t})=(j_{s-1},j_{s}),\) and 
\begin{equation}\label{paste2}
    \mathcal{P}:=(i_0, \hspace{0.05cm} ... \hspace{0.05cm}, i_{t-1},i_t,j_{s+1}, \hspace{0.05cm} ... \hspace{0.05cm}, j_{2m-1},j_0, \hspace{0.05cm} ... \hspace{0.05cm}, j_{s-1},j_s,i_{t+1}, \hspace{0.05cm} ... \hspace{0.05cm}, i_{2m}).
\end{equation}
In \cite{sinaisosh}, \((i_{t-1},i_{t})\) and \((j_{s-1},j_{s})\) are deleted, giving rise to a cycle of length \(4m-2,\) whose expectation is roughly proportional to that of 
\(\mathcal{P}.\) The authors show that solely tuples whose corresponding graphs have \(l/2\) connected components, each of size \(2,\) yield a non-negligible contribution (this generates \((l-1)!!=|\{\{S_1,S_2,\hspace{0.05cm}...\hspace{0.05cm},S_{l/2}\}:|S_1|=...=|S_{l/2}|=2, S_1 \cup S_2 \cup ... \cup S_{l/2}=\{1,2,\hspace{0.05cm}...\hspace{0.05cm},l\}|\}|).\)
\par
In the current situation, the deletions aforesaid are an issue when \(i_{t-1}i_{t}\) has multiplicity \(3\) in \(\mathcal{P}\) (the expectation of the remaining cycle is \(0,\) whereas this might not be true for \(\mathcal{P}\)), and Lemma~\ref{lem999} replaces this procedure. It yields anew that solely the graphs with \(l/2\) connected components make non negligible contributions, producing the desired conclusion as by virtue of the case \(l=2\) (justified below) and
\[\chi_{2|l} \cdot (l-1)!! \cdot [2(2m)^2(np)^{4m-2}p(1-p) \cdot (1+o(1))]^{l/2}=\]
\begin{equation}\label{llargerthan2}
    =\chi_{2|l} \cdot (l-1)!! \cdot 2^{l/2}(2m)^{l}(np)^{(2m-1)l}(p(1-p))^{l/2} \cdot (1+o(1)).
\end{equation}
\par
Begin with a simplification of (\ref{Ahighmom}). Lemma~\ref{lemma888} below and (\ref{boundtilde}) yield
\[|\mathbb{E}[(a_{\mathbf{i}_1}-\mathbb{E}[a_{\mathbf{i}_1}]) \cdot (a_{\mathbf{i}_2}-\mathbb{E}[a_{\mathbf{i}_2}]) \cdot ... \cdot (a_{\mathbf{i}_l}-\mathbb{E}[a_{\mathbf{i}_l}])]| \leq 2^{l-1}|\mathbb{E}[a_{\mathbf{i}_1} \cdot a_{\mathbf{i}_2} \cdot ... \cdot a_{\mathbf{i}_l}]|.\]

\begin{lemma}\label{lemma888}
    For \(s_1,s_2,\hspace{0.05cm}...\hspace{0.05cm},s_t \in \mathbb{N},0 \leq p \leq 1,\)
    \begin{equation}\label{ineq1999}
        (1-p)^{s_1+s_2+...+s_t-1}+p^{s_1+s_2+...+s_t-1} \geq \frac{1}{2^t} \cdot (p(1-p))^{t-1} \cdot \prod_{1 \leq i \leq t}{[(1-p)^{s_i-1}+p^{s_i-1}]},
    \end{equation}
    \begin{equation}\label{ineq2999}
        |(1-p)^{s_1+s_2+...+s_t-1}-p^{s_1+s_2+...+s_t-1}| \geq \frac{1}{2^t} \cdot (p(1-p))^{t-1} \cdot |(1-p)^{s_1-1}-p^{s_1-1}| \cdot \prod_{2 \leq i \leq t}{[(1-p)^{s_i-1}+p^{s_i-1}]}.
    \end{equation}
\end{lemma}

\begin{proof}
Suppose \(p \in (0,\frac{1}{2}]\) as the claims are not only immediate for \(p \in \{0,1\},\) but also symmetric in \(p,1-p.\) It suffices to show that for \(a \geq 1,\)
\[a^{s_1+s_2+...+s_t-1}+1 \geq \frac{1}{2^t} \cdot a^{t-1} \cdot \prod_{1 \leq i \leq t}{(a^{s_i-1}+1)},\]
\[a^{s_1+s_2+...+s_t-1}-1 \geq \frac{1}{2^t} \cdot a^{t-1}  \cdot (a^{s_1-1}-1)\prod_{2 \leq i \leq t}{(a^{s_i-1}+1)}\]
insomuch as these for \(a=\frac{1-p}{p} \geq 1,\) and \(p \leq 1\) entail the desired results.
\par
The first inequality follows from 
\[\frac{1}{2^t} \cdot a^{t-1}  \cdot \prod_{1 \leq i \leq t}{(a^{s_i-1}+1)} \leq \frac{1}{2^t} \cdot a^{t-1} \cdot \prod_{1 \leq i \leq t}{2a^{s_i-1}}=a^{s_1+s_2+...+s_t-1} \leq a^{s_1+s_2+...+s_t-1}+1,\]
while for the second, employ the first and
\[a^{n+m-1}-1 \geq \frac{a}{2} \cdot (a^{n-1}-1)(a^{m-1}+1)\]
for \(n,m \in \mathbb{N},\) the latter holding because
\[2(a^{n+m-1}-1)- a(a^{n-1}-1)(a^{m-1}+1)=a^{n+m-1}+a^m+a-2-a^{n}=a^{n}(a^{m-1}-1)+(a^m+a-2) \geq 0.\]

\end{proof}

To justify (\ref{llargerthan2}), showing graphs \(\mathcal{G}\) with a connected component of size \(L>2\) yield negligible contributions is key, and this is done by merging tuples of lacunary cycles into one. 
Suppose \(\mathbf{i}_1,\mathbf{i}_2,\hspace{0.05cm}...\hspace{0.05cm},\mathbf{i}_L\) form a connected component: let \(e(\mathbf{i}_k)\) be the first undirected edge in \(\mathbf{i}_k\) that appears in one of the remaining cycles and that is unerased in both, as well as 
\[\{e_1,e_2,\hspace{0.05cm}...\hspace{0.05cm}, e_r\}=\{e(\mathbf{i}_k), 1 \leq k \leq L\}.\] 
Consider the lacunary cycle \(\mathbf{i}\) of length \(2mL+L\) formed from \(\mathbf{i}_1,\mathbf{i}_2,\hspace{0.05cm}...\hspace{0.05cm},\mathbf{i}_L\) by adding an erased edge between the endpoints of consecutive cycles and erasing all copies of  \(e_1,e_2,\hspace{0.05cm}...\hspace{0.05cm}, e_r:\)
\begin{equation}\label{gluedsum}
    \mathbf{i}=(i_{10},i_{11},\hspace{0.05cm}...\hspace{0.05cm},i_{1(2m+1)},i_{20},i_{21},\hspace{0.05cm}...\hspace{0.05cm},i_{2(2m+1)},\hspace{0.05cm}...\hspace{0.05cm},i_{L0},i_{L1},\hspace{0.05cm}...\hspace{0.05cm},i_{L(2m+1)},(e_{k}(\mathbf{i}))_{1 \leq k \leq (2m+1)L}),
\end{equation}
\[\mathbf{i}_j=(i_{j0},i_{j1},\hspace{0.05cm}...\hspace{0.05cm},i_{j(2m+1)}), \hspace{0.2cm} 1 \leq j \leq L,\]
\[e_{k}(\mathbf{i})=\begin{cases}
    0, \hspace{1.5cm} k \in \{2m+1,2 \cdot (2m+1),\hspace{0.05cm}...\hspace{0.05cm},L \cdot (2m+1)\},\\
    0, \hspace{1.5cm} k=(2m+1)\cdot j+t, \hspace{0.2cm} 1 \leq j \leq L, 1 \leq t \leq 2m, \hspace{0.2cm} e_t(\mathbf{i}_j)=(u,v), \hspace{0.2cm} uv \in \{e_1,e_2,\hspace{0.05cm}...\hspace{0.05cm}, e_r\},\\
    e_t(\mathbf{i}_j), \hspace{0.8cm} k=(2m+1)\cdot j+t, \hspace{0.2cm} 1 \leq j \leq L, 1 \leq t \leq 2m, \hspace{0.2cm} e_t(\mathbf{i}_j)=(u,v), \hspace{0.2cm} uv \not \in \{e_1,e_2,\hspace{0.05cm}...\hspace{0.05cm}, e_r\}.
\end{cases}
\]
The map \((\mathbf{i}_1,\mathbf{i}_2,\hspace{0.05cm}...\hspace{0.05cm},\mathbf{i}_L) \to \mathbf{i}\) is injective, and
\[\mathbb{E}[a_{\mathbf{i}_1} \cdot a_{\mathbf{i}_2} \cdot ... \cdot a_{\mathbf{i}_L}]=p^{-L-\sum_{1 \leq j \leq r}{m_{\mathbf{i}}(e_j)}} \cdot \prod_{1 \leq j \leq r}{\mathbb{E}[\Tilde{a}^{m_{\mathbf{i}}(e_j)}_{11}]} \cdot \mathbb{E}[a_{\mathbf{i}}]:\]
such lacunary cycles \(\mathbf{i}\) are analyzed next.
There are at most
\[(2m)^L \cdot (2mL \cdot 2L)^{\sum_{1 \leq j \leq r}{m_{\mathbf{i}}(e_j)}-L}\]
arrangements for the positions of the copies of \((e_j)_{1 \leq j \leq r}:\) each cycle must have one alike edge, generating the first factor, all the other copies have at most \(2mL-L \leq 2mL\) possibilities for their positions, with at most \(2L\) choices for the edges (orientation matters), and their number is \(\sum_{1 \leq j \leq r}{m_{\mathbf{i}}(e_j)}-L.\) Each fixed configuration of these edges (positions and orientations for the copies of \(e(\mathbf{i}_1),e(\mathbf{i}_2),\hspace{0.05cm}...\hspace{0.05cm},e(\mathbf{i}_L)\)) yields a lacunary cycle with at least 
\[L+\sum_{1 \leq j \leq r}{m_{\mathbf{i}}(e_j)}\]
erased edges, and Lemma~\ref{lem999} below renders a crucial upper bound. Before stating this result, additional notation is needed. 
\par
Fix \(t \in \mathbb{N},\) and let \(L,R:\{1,2,\hspace{0.05cm}...\hspace{0.05cm},t\} \to \{1,2,\hspace{0.05cm}...\hspace{0.05cm},2t\}\) be arbitrary functions. These induce a multigraph \(\mathcal{G}\) with vertices \(\{1,2,\hspace{0.05cm}...\hspace{0.05cm},2t\},\) and directed edges \(((L(q),R(q)))_{q \in \{1,2,\hspace{0.05cm}...\hspace{0.05cm},t\}}.\) Denote by \(C(\mathcal{G})\) the maximal clusters of contiguous identical edges: tuples \(c=(t_1,t_1+1,\hspace{0.05cm}...\hspace{0.05cm},t_2)\) with
\[
R(q)=L(q+1), L(q)=R(q+1), \hspace{0.4cm} q \in \{t_1,t_1+1,\hspace{0.05cm}...\hspace{0.05cm},t_2-1\},\]
\[(R(q)-L(q+1))^2+(L(q)-R(q+1))^2>0,  \hspace{0.4cm} q \in \{t_1-1,t_2\}.\] 
Let
\[e(c):=L(t_1)R(t_1),\hspace{0.5cm} |c|:=|\{t_1,t_1+1,\hspace{0.05cm}...\hspace{0.05cm},t_2-1,t_2\}| \]
with indices taken modulo \(t,\) \(L(0):=L(t),R(0):=R(t),\) and
\[C_2(\mathcal{G}):=\{c: c \in C(\mathcal{G}), e(c)=uv,u \neq v, \exists c' \in C(\mathcal{G}), c' \ne c, e(c')=e(c)\}.\] 
For an edge \(e',\) denote by \(s(e')\) the number of pairs of consecutive clusters consisting of copies of it and separated by at least one element of the tuple \((L(q)R(q))_{1 \leq q \leq t}\) when \(e'=uv,u \ne v,\) and \(0\) otherwise: for \(\{c_1,c_2, \hspace{0.05cm}... \hspace{0.05cm},c_k\}:=\{c: c \in C(\mathcal{G}),e(c)=e'\}\) with \(c_k=(t_{1k},t_{1k}+1,\hspace{0.05cm}...\hspace{0.05cm},t_{2k}), t_{11}<t_{12}<...<t_{1k},\) 
\begin{equation}\label{sdef}
 s(e'):=|\{j: 1 \leq j \leq k, L_{t_{2j}+1}R_{t_{2j}+1} \ne e'\}\}| \cdot \chi_{e'=uv,u \ne v},  
\end{equation}
say \(c_{j},c_{j+1}\) are \textit{neighbors}, call them \textit{separated} when \(L_{t_{2j}+1}R_{t_{2j}+1} \ne e',\) and let
\[m_{\mathcal{E}}(e):=|\{q:q \in \{1,2,\hspace{0.05cm}...\hspace{0.05cm},t\} ,L(q)R(q)=e\}|,\]
\[m_{\mathcal{V}}(y):=|\{q: q \in \{1,2,\hspace{0.05cm}...\hspace{0.05cm},t\}, L(q)R(q)=yy',y \ne y',m_\mathcal{E}(L(q)R(q))=1\}|;\]
lastly, for lacunary cycles \(\mathbf{i},\mathbf{j},\) say \(\mathbf{j}\) is \textit{vertex homomorphic} to \(\mathbf{i}\) if the two have the same length \(m,\) and there exists a function \(T:\mathbb{Z} \to \mathbb{Z}\) with \(T(i_{k})=j_k\) for all \(0 \leq k \leq m,\) where \((i_0,i_1,\hspace{0.05cm}...\hspace{0.05cm},i_m),(j_0,j_1,\hspace{0.05cm}...\hspace{0.05cm},j_m)\) are the (ordered) vertices of \(\mathbf{i},\mathbf{j},\) respectively. 

\begin{lemma}\label{lem999}
    Suppose the conditions in Proposition~\ref{proptra} hold, \(t \in \mathbb{N},\) let \(L,R:\{1,2,\hspace{0.05cm}...\hspace{0.05cm},t\} \to\{1,2,\hspace{0.05cm}...\hspace{0.05cm},2t\}\) be arbitrary functions, 
    and \(1 \leq k_1<k_2<...<k_{t} \leq m.\) 
    There exists a universal constant \(C>0\) such that for \(\tilde{p}n \geq n_0,\) the contribution to \(\mathbb{E}[tr(A^m)]\) of lacunary cycles vertex homomorphic to lacunary cycles \(\mathbf{i}\) with
    \[e_{k_q}(\mathbf{i})=0, (i_{k_q-1},i_{k_q})=(L(q),R(q)),  1 \leq q \leq t, \hspace{0.4cm} |\{uv: u \ne v, m_{\mathcal{E}}(uv)=1\}|>0\]
    is at most
    \begin{equation}\label{formulaa}
        (np)^{m} \cdot n^{-\sum_{c \in C(\mathcal{G})}{(|c|-\chi_{c \not \in C_2(\mathcal{G})})-\frac{1}{2}\sum_{e \in E}{(s(e)-2)_{+}}-\sum_{v \in V}{(m_{\mathcal{V}}(v)-1)_{+}}}}\cdot C^{|C(\mathcal{G})|} \cdot (1+\frac{Cm^2}{\tilde{p}n}),
    \end{equation}
    where \(E=\{L(q)R(q): 1 \leq q \leq t\},V=\{L(q),R(q),1 \leq q \leq t\},\) and \(x_{+}:=\max{(x,0)}.\)
\end{lemma}

\begin{proof}
    Proceed by induction on \(t+m:\) \(m=1\) is immediate as \(\mathbb{E}[tr(A)]=np.\) A consequence of Proposition~\ref{proptra} is that the contribution to \(\mathbb{E}[tr(A^m)]\) of lacunary cycles with at least one erased edge is
    \[(np)^m+O(m(np)^{m-1})\]
    because the third term in both bounds can be dispensed with. 
    This suffices to justify the claim for \(t=1.\) 
    \par
    Let now \(t \geq 2,\) and assume \(|c| \leq 2\) for all \(c \in C(\mathcal{G}):\) when there exists a cluster \(c_0\) with \(|c_0|>2,\) the induction hypothesis can be applied by dispensing with \(2 \cdot \lfloor \frac{|c_0|-1}{2} \rfloor \geq 2\) of its edges (drop 
    \[(L(q),R(q))_{t_1+1 \leq q \leq t_2-\chi_{2|t_1+t_2+1}}\] 
    for \(c_0=(t_1,t_1+1,\hspace{0.05cm}...\hspace{0.05cm},t_2),\) take \(t'=t-2\cdot \lfloor \frac{|c_0|-1}{2} \rfloor\) instead of \(t,\) and relabel the lacunary cycles of interest via the natural isomorphism between \(\{1,2,\hspace{0.05cm}...\hspace{0.05cm},k_1,k_1+k_2,\hspace{0.05cm}...\hspace{0.05cm},k_1+k_2+k_3-1\}\) and \(\{1,2,\hspace{0.05cm}...\hspace{0.05cm},k_1+k_3\}\) for \(k_1,k_2,k_3 \in \mathbb{N}\)). The induction hypothesis can be anew applied when there exists \(c \in C(\mathcal{G}),e(c)=uu:\) by discarding it and using the induction hypothesis on such smaller cycles, the sum in the second factor in (\ref{formulaa}) decreases by \(|c|,\) yielding the desired bound as \((np)^{-|c|}n^{|c|}p^{|c|}=1.\) Assume next \(L(q) \ne R(q)\) for all \(1 \leq q \leq t.\)
    \par
    \underline{Case \(1:\)} \(C_2(\mathcal{G}) \ne \emptyset.\)
    \par
    Take \(v \in V\) 
    a minimizer for the function \(d:V \to (0,\infty]\) given by
    \[d(u)=\min_{(x,y):x \ne y,i_x=i_y=u, \exists q,r, x \in \{k_q,k_q-1\},y \in \{k_r,k_r-1\}}{\min{(|x-y|,m-|x-y|)}}:\]
    since \(|C_2(\mathcal{G})|>0,\) the minimum is finite, and consider two positions, \(x,y,\) at which it is attained. Denote the two cycles induced by \(x,y\) (\(i_x=i_y=v\)) along with their lengths by \(\mathcal{L}_1,\mathcal{L}_2, m_1,m_2,\) respectively, with \(m_1 \leq  m_2,\) and let \(\mathcal{G}_1,\mathcal{G}_2\) be the induced graphs by \(\mathcal{G}\) in each of them. Note that no two equal elements of the multiset \((i_{k_q-1},i_{k_q})_{1 \leq q \leq t}\) belong to \(\mathcal{L}_1\) (this would contradict the definition of \(v\)), and \(\mathcal{L}_1\) or \(\mathcal{L}_2\) contains an edge \(L(s)R(s)\) of multiplicity \(1.\)
    \par
    \((i)\) \(\mathcal{L}_1,\mathcal{L}_2\) share no unerased edge: assume first both have an erased edge of multiplicity \(1\) among the initial \(t\) and use the induction hypothesis. Fix \(uw \in E\) with clusters appearing in both \(\mathcal{G}_1\) and \(\mathcal{G}_2\): the definition of \(v\) entails
    \[|C_2(\mathcal{G}_1) \cap \{c,e(c)=uw\}|=0, \hspace{0.5cm} s_{\mathcal{G}_1}(uw)=0\]
    because \(\mathcal{L}_1\) contains exactly one cluster with underlying edge \(uw,\) and thus
    \begin{equation}\label{eqq1}
        |C_2(\mathcal{G}_2) \cap \{c,e(c)=uw\}|=(|C(\mathcal{G}) \cap \{c,e(c)=uw\}|-1) \cdot \chi_{|C(\mathcal{G}) \cap \{c,e(c)=uw\}| \geq 3},
    \end{equation}
    as well as
    \begin{equation}\label{eqq2}
        (s_{\mathcal{G}_1}(uw)-2)_{+}+(s_{\mathcal{G}_2}(uw)-2)_{+} \geq (s_{\mathcal{G}}(uw)-2)_{+}-2+2\chi_{s_{\mathcal{G}}(uw) \leq 2}
    \end{equation}
    since \(s_{\mathcal{G}_1}(uw)+s_{\mathcal{G}_2}(uw)=s_{\mathcal{G}_2}(uw) \geq s_{\mathcal{G}}(uw)-2\) (let \(\{c_1,c_2,\hspace{0.05cm} ...\hspace{0.05cm},c_k\}=\{c: c \in C(\mathcal{G}),e(c)=uw\}\) with the indices defined as above (\ref{sdef}); 
    when \(k \leq 2\) or one of \(\mathcal{L}_1,\mathcal{L}_2\) is given by one of these clusters, the result is clear; otherwise \(k \geq 3,\) and since \(\mathcal{L}_1\) contains solely one such cluster, suppose without loss \(x,y\) belong to the segments between \(c_{i_1-1},c_{i_1}\) and \(c_{i_1},c_{i_1+1},\) respectively, equivalent to \(x,y\) being elements of \((t_{2(i_1-1)}+1,,\hspace{0.05cm}...\hspace{0.05cm},t_{1i_1}-1),\) \((t_{2i_1}+1,,\hspace{0.05cm}...\hspace{0.05cm},t_{1(i_1+1)}-1),\) respectively; 
    this implies \(s_{\mathcal{G}_2}(uw) \geq s_{\mathcal{G}}(uw)-2\) as at most two pairs of separated neighbors, \((c_{i_1-1},c_{i_1}),(c_{i_1},c_{i_1+1}),\) are lost in the split); (\ref{eqq1}) and (\ref{eqq2}) yield the power of \(n\) in the contributions of \(\mathcal{L}_1,\mathcal{L}_2\) increases, from the contribution of \(uw,\) by at most \(2\) as
    \[1+\chi_{|C(\mathcal{G})\cap \{c, e(c)=uw\}|=2}+(1-\chi_{s_{\mathcal{G}}(uw) \leq 2}) \leq 2\]
    (the first two terms 
    correspond to the changes in the contributions of \(c_{i_1}\) and \(c_{i_1+1},\) respectively). This additional factor of \(n^2\) is counteracted by the choices for \(u,w\) that are double counted (by the definition of \(v,\) the sets \(\{u,w\}\) given by such edges are pairwise disjoint) and
    \begin{equation}\label{ineq900}
        1+\frac{C(m_1+m_2)^2}{\tilde{p}n}-(1+\frac{Cm_1^2}{\tilde{p}n})(1+\frac{Cm_2^2}{\tilde{p}n})=\frac{Cm_1m_2}{\tilde{p}n}\cdot (2-\frac{Cm_1m_2}{\tilde{p}n}) \geq \frac{Cm_1m_2}{\tilde{p}n}
    \end{equation}
    can be employed for the last factors in the bound. Lastly, when one of \(\mathcal{L}_1,\mathcal{L}_2\) does not have an erased edge of multiplicity \(1,\) the induction hypothesis can be used similarly (select \(v\) in the loop with such an erased edge: this together with \(m \geq 3,\) a consequence of these configurations having at least one erased edge and two clusters of edges, \(O(C_m(np)^{m/2+1}) \leq (np)^m\) for \(pn \geq n_0,\) and Propositions~\ref{prop1},\ref{prop2} yield the claim).
    \par
    \((ii)\) \(\mathcal{L}_1,\mathcal{L}_2\) share an unerased edge \(uw:\) if \(m(uw) \ne 3,\) then paste \(\mathcal{L}_1,\mathcal{L}_2\) using two copies of \(uw\) (via (\ref{paste1}) or (\ref{paste2})), and subsequently delete both. The preimage of any cycle under this mapping has size \(O(m_1m_2)\)
    (for a cycle of length \(m=m_1+m_2-2,\) select two vertices at distance \(m_1,\) a first vertex for the second cycle and its orientation, yielding \(O(mm_1)=O(m_1m_2)\) as \(m_2 \geq \frac{m}{2}\)), the length decreases by \(2,\) and no additional factor is needed (configurations with \(m(uw) \geq 4\) require no adjustment in virtue of Lemma~\ref{lemmbill} below, and those with \(m(uw)=2\) require \(p(1-p) \leq 1\)), resulting in a contribution of size \(O((np)^{-2}m_1m_2)\) relative to (\ref{formulaa}): in the exponent of the power of \(n\) in (\ref{formulaa}), the first sum can only increase (two pairs of two clusters sharing an underlying edge might become one), the second remains unchanged (\(s(\cdot)\) is invariant under the following cycle operators: \((u,S_1,u,S_2,u) \to (u,S_1,u,\overline{S}_2,u)\) when no cluster of \(\cdot\) has edges both in \(S_1\) and \(S_2,\) \((u,u',u,S,u) \to (u,S,u)\) for \((u,u'),(u',u)\) unerased, and \((v_0,v_1,\hspace{0.05cm}...\hspace{0.05cm},v_{k-1},v_0) \to (v_1,v_2,\hspace{0.05cm}...\hspace{0.05cm},v_{k-1},v_0,v_1),\) with the pasting described above being a composition of these three mappings), and the third clearly does not change. Otherwise, \(m(uw)=3\) and erase the third copy of \(uw:\) the preimages have sizes \(O(m^3),\) their contributions, compared to those of interest, have a supplementary factor of 
    \(O((np)^{-2}m^3)\)
    (the number of edges is reduced by \(2,\) and the contribution of \(uw\) becomes \(p\) from \(p(1-p)(1-2p),\) \(p(1-p) \cdot |1-2p| \leq p\)), and again the second factor in the bound can only decrease by this procedure. The conclusion ensues via the induction hypothesis, (\ref{ineq900}), \(m_1,m_2 \leq m \leq c(\tilde{p}n)^{1/4}, \tilde{p}n \geq C^2,\)
    \begin{equation}\label{ineq800}
        O((np)^{-2}m^3)+O((np)^{-2}m^2)=O((np)^{-2}m^3) \leq \frac{C}{\tilde{p}n} \leq \frac{Cm_1m_2}{\tilde{p}n}.
    \end{equation}
    \par
    \underline{Case \(2:\)} \(C_2(\mathcal{G})=\emptyset.\) 
    \par
    In this case, the desired bound is
    \[(np)^{m} \cdot n^{-\sum_{w}{(m_V(w)-1)_{+}}}\cdot C^{|C(\mathcal{G})|} \cdot (1+\frac{Cm^2}{\tilde{p}n}).\]
    If no two edges among \((L(q)R(q))_{1 \leq q \leq t}\) share an endpoint, the conclusion holds from the observation in the beginning of the proof (the above is at least \(C(np)^m\)). Otherwise let \(v\) be a vertex incident with at least two edges among \((L(q)R(q))_{1 \leq q \leq t},\) and denote by \(\mathcal{L}_1,\mathcal{L}_2\) the cycles induced by two of its copies. 
    \par
    \((iii)\) \(\mathcal{L}_1\) and \(\mathcal{L}_2\) share no unerased edge. If each of them has at least one erased edge, then the induction hypothesis can be employed and note the contribution of any vertex \(w\) does not change: when \(w\) appears solely in one of the two, this is clear, and otherwise the claim follows from 
    \[(m_{\mathcal{V},\mathcal{L}_1}(w)-1)_{+}+(m_{\mathcal{V},\mathcal{L}_2}(w)-1)_{+}+1=(m_\mathcal{V}(w)-1)_{+}\] (as \(w\) appears in both, it must be chosen solely once, yielding a saving of \(1\) in the exponent of \(n\)). Else, one of the loops has no erased edge, and again the result follows (the extra factor of \(n\) is discounted as \(v\) must be selected solely once). 
    \par
    \((iv)\) \(\mathcal{L}_1\) and \(\mathcal{L}_2\) share an unerased edge \(uw.\) Proceed as in \((ii):\) if \(m(uw) \ne 3,\) then delete two of its copies and this gives rise to a contribution of relative size \(O((np)^{-2} \cdot m_1m_2);\) else delete two of its copies and erase one, which generates \(O((np)^{-2}m^3).\)
    \par
    The conclusion follows by the induction hypothesis via \((iii)\) and \((iv)\)  together with (\ref{ineq900}) and (\ref{ineq800}).
\end{proof}

\begin{lemma}\label{lemmbill}
    For \(k \in \mathbb{N}, k \geq 2, p \in [0,1],\)
    \[|(1-p)^{k+1}-(-p)^{k+1}| \leq |(1-p)^{k-1}-(-p)^{k-1}|.\]
\end{lemma}

\begin{proof}
    The cases \(2|k,p \in \{0,1\}\) follow easily as \(k \to a^k\) is decreasing for \(a<1.\) Suppose next \(2|k+1,\) and without loss of generality assume \(p \in (0,\frac{1}{2}].\) Then
    \[(1-p)^{k-1}-p^{k-1}-((1-p)^{k+1}-p^{k+1})=(1-p)^{k-1} \cdot p(2-p)-p^{k-1} \cdot (1-p)(1+p)=\]
    \[=p(1-p) \cdot [(1-p)^{k-2}\cdot (2-p)-p^{k-2} \cdot (1+p)] \geq 0\]
    employing \(1-p \geq p, 2-p \geq 1+p.\)
\end{proof}

Return to the higher moments: fix a tuple \((m(e_j))_{1 \leq j \leq r},\) and use 
Lemma~\ref{lem999} on \(\mathbf{i}\) defined in (\ref{gluedsum}) (\(n^w=(\tilde{p}n)^{1/2}\) for Proposition~\ref{proptra}). In this case, the exponent of the second factor in (\ref{formulaa}) is
\[-L-\sum_{c \in C(\mathcal{G})}{(|c|-\chi_{c \not \in C_2(\mathcal{G})})}-\frac{1}{2}\sum_{e}{(s(e)-2)_{+}}=\]
\[=-2L-(-L+\sum_{c \in C(\mathcal{G})}{(|c|-\chi_{c \not \in C_2(\mathcal{G})})}+\frac{1}{2}\sum_{e}{(s(e)-2)_{+}})=-2L-(-L+\sum_{1 \leq j \leq r}{m(e_j)}+\frac{1}{2}\sum_{1 \leq j \leq r}{(s_j-2)}),\] 
where \(s_j\) denotes the number of cycles to which \(e_j\) belongs. The overall contribution becomes, for a fixed tuple \((m(e_j))_{1 \leq j \leq r},\) 
\[n^{2mL-L}p^{2mL-L/2}(1-p)^{L/2}(2\sqrt{2}m)^{L} \cdot\]
\begin{equation}\label{bdd}
    \cdot O(n^{L-\sum_{1 \leq j \leq t}{m(e_j)}-\frac{1}{2}\sum_{1 \leq j \leq t}{(s_j-2)}} \cdot (4CmL^2p^{-1})^{-L+\sum_{1 \leq j \leq r}{m(e_j)}} \cdot (p(1-p))^{-L/2+r})
\end{equation}
(the length of \(\mathbf{i}\) is \(2mL+L,\) the preimages generate a factor of \((2m)^L \cdot (4mL^2)^{\sum_{1 \leq j \leq r}{m(e_j)}-L},\) and accounting for the erased edges in \(\mathbf{i}\) yields at most \(p^{-L-\sum_{1 \leq j \leq r}{m(e_j)}} \cdot (p(1-p))^{r}\)). Since
\begin{equation}\label{bdfoo0}
    -L/2+r \geq \sum_{1 \leq j \leq r}{(-s_{j}/2+1)}=-\frac{1}{2}\sum_{1 \leq j \leq t}{(s_j-2)}
\end{equation}
and \(np(1-p) \geq 1, \min_{1 \leq j \leq t}{s_j} \geq 2,\) the bound (\ref{bdd}) becomes 
\begin{equation}\label{bdfoo}
    O((4CmL^2n^{-1}p^{-1})^{-L+\sum_{1 \leq j \leq r}{m(e_j)}} \cdot (np(1-p))^{-\frac{1}{2}\sum_{1 \leq j \leq r}{(s_j-2)}})=O(\delta^{-L+\sum_{1 \leq j \leq r}{m(e_j)}}).
\end{equation}
for 
\[\delta=4CmL^2(np)^{-1}=O((np)^{-3/4}).\]
Hence for \(n\) sufficiently large, the contribution of configurations with \(\sigma:=\sum_{1 \leq j \leq r}{m(e_j)}>L\) is at most
\[\sum_{\sigma \geq L+1}{2^{\sigma}\delta^{-L+\sigma}}=\sum_{L<\sigma< 2L}{2^{\sigma}\delta^{-L+\sigma}}+\sum_{\sigma \geq 2L}{2^{\sigma}\delta^{-L+\sigma}} \leq \sum_{L<\sigma<2L}{2^{\sigma}\delta}+\sum_{\sigma \geq 2L}{2^{\sigma}\delta^{\sigma/2}} \leq \delta \cdot 2^{2L}+ 2 \cdot (2\delta^{1/2})^{2L}=O(\delta)\]
via
\[|\{(t,x_1,\hspace{0.05cm}...\hspace{0.05cm},x_t): x_1+...+x_t=\sigma,x_1,\hspace{0.05cm}...\hspace{0.05cm},x_t \in \mathbb{N}-\{1\}\}|=\sum_{1 \leq t \leq \sigma/2}{\binom{\sigma-2t+t-1}{t-1}} \leq \sum_{0 \leq t \leq \sigma-2}{\binom{\sigma-2}{t}}=2^{\sigma-2},\]
as for \(n,m \in \mathbb{N},\) \(|\{(x_1,\hspace{0.05cm}...\hspace{0.05cm},x_m):x_1+...+x_m=n, x_1,\hspace{0.05cm}...\hspace{0.05cm},x_t \in \mathbb{Z}_{\geq 0}\}\}|=\binom{n+m-1}{m-1}.\) 
When \(\sigma=L,\) the contribution is \(o(1)\) unless 
\[m(e_1)=m(e_2)=...=m(e_r)=2,r=L/2\] ((\ref{bdfoo}), \(np(1-p) \to \infty,\) and \(m=O((np)^{1/4})\) yield \(s_1=s_2=...=s_r=2,\sum_{1 \leq j \leq r}{m(e_j)}=L,\) with the first chain of equalities entailing \(m(e_1)=m(e_2)=...=m(e_r)=2\)), and all edges in \(\mathbf{i}_1,\mathbf{i}_2,\hspace{0.05cm}...\hspace{0.05cm},\mathbf{i}_L\) are erased apart from the copies of \(e_1,e_2,\hspace{0.05cm}...\hspace{0.05cm},e_{r}\) (the proof of Lemma~\ref{lem999} yields in the last factor in (\ref{formulaa}) \(1\) can be dropped when at least one edge is unerased),
which cannot occur when \(L>2\) (the underlying graph has \(L/2>1\) connected components). When \(L=2,\) the previous observation yields the result for \(l=2,\) and the general case \(l \geq 2\) follows as well in light of (\ref{llargerthan2}).
\par
This concludes the analysis of higher moments of the centered trace and the proof of Theorem~\ref{th1}.

\section{Eigenvalue CLT}\label{sect2}
This section contains the proof of Theorem~\ref{th2}, in which (\ref{conv1}) plays a fundamental role. Take \(\overline{A}=\frac{1}{\sqrt{p}}A:\) (\ref{conv1}) can be restated as
\begin{equation}\label{normalizedconv}
    \frac{tr(\overline{A}^{2m})-\mathbb{E}[tr(\overline{A}^{2m})]}{2m \cdot (n\sqrt{p})^{2m-1} \sqrt{1-p}} \Rightarrow N(0,2),
\end{equation}
and the claim of Theorem~\ref{th2} becomes
\[\frac{1}{\sqrt{1-p}}(\lambda_1(\overline{A})-\mathbb{E}[\lambda_1(\overline{A})]) \Rightarrow N(0,2).\] 
Begin with a proof sketch of this convergence, meant to highlight the gist of the argument: in what follows, assume \(\frac{\log{n}}{c_1\log{(\tilde{p}n)}}\leq m \leq c_1(\tilde{p}n)^{1/4}\) for \(c_1>0\) universal: note that for \(n^w=(\tilde{p}n)^{1/2},\)
\[\frac{1}{w} \geq \frac{2\log{n}}{\log{(np)}}, \hspace{0.4cm} (\tilde{p}n^{1-w})^{1/2}=n^{w/2}=(\tilde{p}n)^{1/4},\]
entailing previous (and forthcoming) lemmas can be applied.
\par
Slutsky's lemma and (\ref{sqexp}) below entail \(n\sqrt{p}\) can be replaced by \(\mathbb{E}[\lambda_1(\overline{A})]\) in (\ref{normalizedconv}):
\[\frac{tr(\overline{A}^{2m})-\mathbb{E}[tr(\overline{A}^{2m})]}{2m \sqrt{1-p} \cdot (\mathbb{E}[\lambda_1(\overline{A})])^{2m-1}} \Rightarrow N(0,2).\]
Let \(\xi=\lambda_1(\overline{A})-\mathbb{E}[\lambda_1(\overline{A})]:\)
\[\frac{tr(\overline{A}^{2m})-\mathbb{E}[tr(\overline{A}^{2m})]}{2m \sqrt{1-p}\cdot (\mathbb{E}[\lambda_1(\overline{A})])^{2m-1}}=\frac{(\lambda^{2m}_1(\overline{A})-\mathbb{E}[\lambda^{2m}_1(\overline{A})])+\sum_{k \geq 2}{\lambda^{2m}_k(\overline{A})}-\mathbb{E}[\sum_{k \geq 2}{\lambda^{2m}_k(\overline{A})}]}{2m \sqrt{1-p} \cdot (\mathbb{E}[\lambda_1(\overline{A})])^{2m-1}}=\]
\begin{equation}\label{bigeq}
    =\frac{(\xi+\mathbb{E}[\lambda_1(\overline{A})])^{2m}-(\mathbb{E}[\lambda_1(\overline{A})])^{2m}}{2m\sqrt{1-p} \cdot (\mathbb{E}[\lambda_1(\overline{A})])^{2m-1}}+\frac{(\mathbb{E}[\lambda_1(\overline{A})])^{2m}-\mathbb{E}[\lambda^{2m}_1(\overline{A})]}{2m\sqrt{1-p} \cdot (\mathbb{E}[\lambda_1(\overline{A})])^{2m-1}}+\frac{\sum_{k \geq 2}{\lambda^{2m}_k(\overline{A})}-\mathbb{E}[\sum_{k \geq 2}{\lambda^{2m}_k(\overline{A})}]}{2m \sqrt{1-p}\cdot (\mathbb{E}[\lambda_1(\overline{A})])^{2m-1}}.
\end{equation}
\par
Since \(m \geq \frac{1}{2c_1w} \geq \frac{\log{n}}{c_1\log{(np)}},\)
\begin{equation}\label{dom1}
    \frac{\sum_{k \geq 2}{\lambda^{2m}_k(\overline{A})}-\mathbb{E}[\sum_{k \geq 2}{\lambda^{2m}_k(\overline{A})}]}{2m \cdot (\mathbb{E}[\lambda_1(\overline{A})])^{2m-1}}=o_p(1):
\end{equation}
\(\overline{A}=\sqrt{p}vv^T+\frac{1}{\sqrt{p}}\Tilde{A},v=[1 1 \hspace{0.05cm} ... \hspace{0.05cm} 1]^T \in \mathbb{R}^n,\) and Weyl's inequalities yield for \(2 \leq j \leq n,\)
\[\lambda_{j}(\frac{1}{\sqrt{p}}\Tilde{A})+\lambda_n(\sqrt{p}vv^T)=\lambda_{j}(\frac{1}{\sqrt{p}}\Tilde{A}) \leq \lambda_{j}(\overline{A}) \leq \lambda_{j-1}(\frac{1}{\sqrt{p}}\Tilde{A})=\lambda_{j-1}(\frac{1}{\sqrt{p}}\Tilde{A})+\lambda_2(\sqrt{p}vv^T),\]
whereby
\[\sum_{j \geq 2}{\lambda_j^{2m}(\overline{A})} \leq 2p^{-m} \cdot tr(\Tilde{A}^{2m}),\]
which together with Markov inequality and (\ref{evenmom}) concludes (\ref{dom1}): for \(t>0,\)
\[\mathbb{P}(p^{-m} \cdot tr(\Tilde{A}^{2m}) \geq t(n\sqrt{p})^{2m-1}) \leq \frac{2C_m(p(1-p))^{m}n^{m+1}}{t(n\sqrt{p})^{2m-1}p^m} \leq \frac{2 \cdot 4^mn^2}{t(np)^m}=o(1).\]
\par
Return to (\ref{bigeq}): (\ref{dom1}) yields 
\begin{equation}\label{refined31}
    \frac{tr(\overline{A}^{2m})-\mathbb{E}[tr(\overline{A}^{2m})]}{2m\sqrt{1-p} \cdot (\mathbb{E}[\lambda_1(\overline{A})])^{2m-1}}=\frac{(\xi+\mathbb{E}[\lambda_1(\overline{A})])^{2m}-(\mathbb{E}[\lambda_1(\overline{A})])^{2m}}{2m\sqrt{1-p} \cdot (\mathbb{E}[\lambda_1(\overline{A})])^{2m-1}} +\frac{\mathbb{E}[\lambda^{2m}_1(\overline{A})]-(\mathbb{E}[\lambda_1(\overline{A})])^{2m}}{2m\sqrt{1-p} \cdot (\mathbb{E}[\lambda_1(\overline{A})])^{2m-1}}+o_p(1).
\end{equation}
Lemma~\ref{concentration} yields \(\xi=o_p((1-p)^{1/2}\mathbb{E}[\lambda_1(\overline{A})](np)^{-1/4}),\) whereby when this high-probability event holds,
\[|\frac{(\xi+\mathbb{E}[\lambda_1(\overline{A})])^{2m}-(\mathbb{E}[\lambda_1(\overline{A})])^{2m}}{2m\sqrt{1-p} \cdot (\mathbb{E}[\lambda_1(\overline{A})])^{2m-1}}-\frac{\xi}{\sqrt{1-p}}| \leq (1-p)^{-1/2}(2m)^{-1}\sum_{2 \leq k \leq 2m}{\binom{2m}{k}(\mathbb{E}[\lambda_1(\overline{A})])^{-k+1}|\xi|^k} \leq\]
\[\leq (1-p)^{-1/2}(2m)^{-1} \cdot \mathbb{E}[\lambda_1(\overline{A})] \cdot 8m^2\mathbb{E}[\lambda_1(\overline{A})]^{-2}\xi^2=4m(1-p)^{-1/2}\mathbb{E}[\lambda_1(\overline{A})]^{-1}\xi^2=o_p(\xi),\]
using \(\binom{2m}{k} \leq (2m)^k,m \leq c_1(np)^{-1/4},\) from which Slutsky's lemma gives the desired result conditional on 
\begin{equation}\label{ratiotrue}
    \frac{\mathbb{E}[\lambda^{2m}_1(\overline{A})]-(\mathbb{E}[\lambda_1(\overline{A})])^{2m}}{2m \sqrt{1-p}\cdot (\mathbb{E}[\lambda_1(\overline{A})])^{2m-1}} \to 0.
\end{equation}
\par
Subsection~\ref{1.7} derives two concentration results for \(\lambda_1(\overline{A});\) however, these are not strong enough to yield directly (\ref{ratiotrue}), and instead, these ratios, modulo \(o(1)\) terms, are shown to grow linearly in \(m.\)
Subsection~\ref{1.8} uses this simplification and (\ref{refined31}) for two comparable values of \(m\) (i.e., \(m_1,m_2\) with \(c_2 \leq \frac{m_1}{m_2} \leq \frac{1}{c_2}\)) to conclude (\ref{ratiotrue}) and thus derive the eigenvalue CLT in Theorem~\ref{th2}.

\subsection{Concentration of \(\lambda_1(A)\)}\label{1.7}

The discussion above makes apparent the importance of the ratio in (\ref{ratiotrue}): the two lemmas in this subsection provide a proxy for it. Begin with a concentration result for \(\lambda_1(A).\)

\begin{lemma}\label{concentration}
There exists \(c>0\) such that if \(t \geq 4(\epsilon_{1}+\epsilon_{2}),\) \(\mathbb{E}[\lambda_1(A)] \in [np(1-\epsilon_{1}),np(1+\epsilon_{2})], \epsilon_1, \epsilon_2 \in (0,\frac{1}{4}),\) \(w>0, \tilde{p} \geq n^{w-1}, m \in \mathbb{N}, m^2 \leq c\min{(n^w,\tilde{p}n^{1-w})},\) then
\begin{equation}\label{tails}
    \mathbb{P}(|\frac{\lambda_1(A)}{\mathbb{E}[\lambda_1(A)]}-1| \geq t) \leq
    \frac{4 \cdot 16^m(1-p)^mn}{(np)^{m}t^{2m}}.
\end{equation}
\end{lemma}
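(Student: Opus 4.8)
The plan is to exploit the rank-one decomposition $A = p\,v_nv_n^T + \Tilde{A}$ together with Weyl's inequalities to reduce the concentration of $\lambda_1(A)$ around $np$ to a tail estimate for $||\Tilde{A}||$, and then control the latter by Markov's inequality applied to $tr(\Tilde{A}^{2m})$, invoking the upper bound in (\ref{evenmom}).

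First I would record the deterministic inequality $|\lambda_1(A) - np| \le ||\Tilde{A}||$: since $p\,v_nv_n^T$ has eigenvalues $np$ and $0$ (the latter with multiplicity $n-1$), the two Weyl inequalities give $np + \lambda_n(\Tilde{A}) \le \lambda_1(A) \le np + \lambda_1(\Tilde{A})$, hence $|\lambda_1(A) - np| \le \max(|\lambda_1(\Tilde{A})|,|\lambda_n(\Tilde{A})|) = ||\Tilde{A}||$, and moreover $||\Tilde{A}||^{2m} \le tr(\Tilde{A}^{2m})$ because $2m$ is even so all the eigenvalue powers are nonnegative. Next I would convert the relative deviation in (\ref{tails}) into an absolute deviation from $np$: on the event in question one has $|\lambda_1(A) - \mathbb{E}[\lambda_1(A)]| \ge t\,\mathbb{E}[\lambda_1(A)] \ge t\,np(1-\epsilon_1) \ge \tfrac34 t\,np$ (using $\epsilon_1 < \tfrac14$), while $|\mathbb{E}[\lambda_1(A)] - np| \le (\epsilon_1+\epsilon_2)np \le \tfrac14 t\,np$ by the hypothesis $t \ge 4(\epsilon_1+\epsilon_2)$, so the triangle inequality forces $|\lambda_1(A) - np| \ge \tfrac12 t\,np$.

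Combining the two reductions gives $\mathbb{P}\big(|\lambda_1(A)/\mathbb{E}[\lambda_1(A)] - 1| \ge t\big) \le \mathbb{P}\big(tr(\Tilde{A}^{2m}) \ge (tnp/2)^{2m}\big)$, and Markov's inequality with (\ref{evenmom}) — applicable since $m \le c\min(n^{w/4},\sqrt{pn^{1-w}})$, which for $c$ small also keeps the correction factor $1+Cy^{2/3}$ below $2$ — yields
\[
\mathbb{P}\big(tr(\Tilde{A}^{2m}) \ge (tnp/2)^{2m}\big) \le \frac{2^{2m}\,\mathbb{E}[tr(\Tilde{A}^{2m})]}{(tnp)^{2m}} \le \frac{2\cdot 4^m C_m (p(1-p))^m n^{m+1}}{(tnp)^{2m}} \le \frac{4\cdot 16^m\, n}{(np)^m\, t^{2m}},
\]
where the last step uses $C_m \le 4^m$ and $(p(1-p))^m n^{m+1} \le (np)^m n$.

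I do not expect a serious obstacle here: the argument is three lines of Weyl/Markov bookkeeping layered on top of (\ref{evenmom}). The only points requiring care are tracking constants so the final bound is genuinely $\le 4\cdot 16^m n/((np)^m t^{2m})$ (absorbing the factors $(1-p)^m \le 1$, $C_m \le 4^m$, and $1+Cy^{2/3}$), and checking that the $m$-range in (\ref{tails}) sits inside the range for which (\ref{evenmom}) is valid, which holds once the constant $c$ in the lemma is small relative to the one in subsection~\ref{1.1}. One could instead avoid Weyl's inequalities and bound the upper tail via $\lambda_1(A)^{2m} \le tr(A^{2m})$ with (\ref{Amom2}) and the lower tail via $\lambda_1(A) \ge n^{-1}\sum_{i,j} a_{ij}$, but routing both tails through $\Tilde{A}$ is simultaneously cleaner and tighter.
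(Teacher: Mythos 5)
Your proof is correct and follows essentially the same route as the paper's: both decompose $A = p\,v_nv_n^T + \tilde A$, pass from the relative deviation to $\|\tilde A\| \ge tnp/2$ via the hypotheses $t \ge 4(\epsilon_1+\epsilon_2)$ and $\epsilon_1,\epsilon_2 < \tfrac14$, and then bound the tail of $\|\tilde A\|$ by Markov applied to $\mathrm{tr}(\tilde A^{2m})$ using (\ref{evenmom}). The only cosmetic difference is that you collapse both tails into the single inequality $|\lambda_1(A)-np|\ge \tfrac12 tnp$ before applying Markov, whereas the paper bounds the right and left tails separately (each by $2\cdot 16^m n/((np)^m t^{2m})$) and adds them; the constant bookkeeping, the use of $C_m\le 4^m$, $(1-p)^m\le 1$, and the restriction on $m$ so that the correction factor in (\ref{evenmom}) stays bounded all line up with the paper's argument.
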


\begin{proof}
It suffices to show each tail has probability at most \(\frac{2 \cdot 16^m(1-p)^mn}{(np)^{m}t^{2m}}.\) Begin with the right:
\[\mathbb{P}(\frac{\lambda_1(A)}{\mathbb{E}[\lambda_1(A)]}-1 \geq t) \leq \mathbb{P}(||A|| \geq (1+t)\mathbb{E}[\lambda_1(A)]) \leq  \mathbb{P}(||A|| \geq np(1+t)(1-\epsilon_1)).\]
Since 
\[||A||=||pvv^T+\Tilde{A}|| \leq ||pvv^T||+||\Tilde{A}||=np+||\Tilde{A}||,\]
an upper bound for
\[\mathbb{P}(||\Tilde{A}|| \geq np(t(1-\epsilon_1)-\epsilon_1)) \leq \mathbb{P}(||\Tilde{A}|| \geq npt/2)\]
is enough (\(4\epsilon_1(1/2-\epsilon_1)-\epsilon_1=\epsilon_1-4\epsilon_1^2 \geq 0\)). Use (\ref{evenmom}): for some \(c_1>0\) and \(m^2 \leq c_1\min{(n^w,\tilde{p}n^{1-w})},\)
\[\mathbb{P}(||\Tilde{A}|| \geq npt/2) \leq \frac{2C_m(p(1-p))^mn^{m+1}}{(npt/2)^{2m}} \leq \frac{2 \cdot 16^m(1-p)^mn}{(np)^{m}t^{2m}}.\]
Similarly, for the left tail,
\[\lambda_1(A)=\lambda_1(pvv^T+\Tilde{A}) \geq \lambda_1(pvv^T)-||\Tilde{A}||=np-||\Tilde{A}||,\]
whereby
\[\mathbb{P}(\frac{\lambda_1(A)}{\mathbb{E}[\lambda_1(A)]}-1 \leq -t) \leq \mathbb{P}(\lambda_1(A) \leq np(1+\epsilon_2)(1-t)) \leq \mathbb{P}(||\Tilde{A}|| \geq np(t(1+\epsilon_2)-\epsilon_2)) \leq \mathbb{P}(||\Tilde{A}|| \geq npt/2).\]
\end{proof}

Under the binomial expansion given by \(\lambda_1(\overline{A})=\mathbb{E}[\lambda_1(\overline{A})]+(\lambda_1(\overline{A})-\mathbb{E}[\lambda_1(\overline{A})]),\)
Lemma~\ref{moments} yields 
\[\frac{\mathbb{E}[\lambda^{2m}_1(\overline{A})]-(\mathbb{E}[\lambda_1(\overline{A})])^{2m}}{2m \cdot (\mathbb{E}[\lambda_1(\overline{A})])^{2m-1}}=\mathbb{E}[\lambda_1(\overline{A})] \cdot \frac{\mathbb{E}[(\frac{\lambda_1(A)}{\mathbb{E}[\lambda_1(A)]})^{2m}]-1}{2m}=\]
\[=\mathbb{E}[\lambda_1(\overline{A})] \cdot \frac{\mathbb{E}[(\frac{\lambda_1(A)}{\mathbb{E}[\lambda_1(A)]})^{2m}\chi_{|\frac{\lambda_1(A)}{\mathbb{E}[\lambda_1(A)]}-1| \leq (np/K)^{-1/2}}]-1}{2m}+o(1),\]
for \(K \geq c_0.\)

\begin{lemma}\label{moments}
There exist \(c_1,c_2,c_3>0\) such that if \(K>0, w \in (\frac{1+\log{K}}{c_1\log{n}},1),\tilde{p} \geq n^{w-1},\) \(m_0=\lfloor c_2 \min{(n^{w/2},(\tilde{p}n^{1-w})^{1/2})} \rfloor,\) \(m_0 \geq \frac{1}{c_3w},m \in \mathbb{N},m \leq \frac{m_0}{2},\) then
\[\mathbb{E}[(\frac{\lambda_1(A)}{\mathbb{E}[\lambda_1(A)]}-1)^{2m}\chi_{|\frac{\lambda_1(A)}{\mathbb{E}[\lambda_1(A)]}-1| \geq (np/K)^{-1/2}}] \leq \frac{4n}{(K/16)^{m_0}}\cdot (\frac{np}{K})^{-m}.\]
\end{lemma}

\begin{proof}
When \(\delta^{\frac{1}{2m}}>4(\epsilon_1+\epsilon_2)\) with \(\epsilon_1,\epsilon_2\) as in Lemma~\ref{concentration}, inequality (\ref{tails}) yields for \(m \leq \frac{m_0}{2},\)
\[\mathbb{E}[(\frac{\lambda_1(A)}{\mathbb{E}[\lambda_1(A)]}-1)^{2m}\chi_{|\frac{\lambda_1(A)}{\mathbb{E}[\lambda_1(A)]}-1| \geq \delta^{\frac{1}{2m}}}] \leq \frac{4 \cdot 16^{m_0}n}{(np)^{m_0}}\int_{\delta^{\frac{1}{2m}}}^{\infty}{2mt^{2m-1} \cdot t^{-2m_0}dt}=\frac{4 \cdot 16^{m_0}n}{(np)^{m_0}} \cdot \frac{m\delta^{1-m_0/m}}{m_0-m},\]
and in particular, when \(\delta=(np/K)^{-m},\)
\[\mathbb{E}[(\frac{\lambda_1(A)}{\mathbb{E}[\lambda_1(A)]}-1)^{2m}\chi_{|\frac{\lambda_1(A)}{\mathbb{E}[\lambda_1(A)]}-1| \geq (np/K)^{-1/2}}] \leq \frac{4n}{(K/16)^{m_0}} \cdot (\frac{np}{K})^{-m}.\]
It is shown next that some \(\epsilon_1=O(\frac{1}{np}),\epsilon_2=O(\frac{1}{np})\) are valid choices for Lemma~\ref{concentration}, whereby the result ensues as \(\delta^{\frac{1}{2m}}=(np/K)^{-1/2}, 4(\epsilon_1+\epsilon_2) \leq \frac{C'}{np},\) ensuring \(4(\epsilon_1+\epsilon_2)<\delta^{\frac{1}{2m}}\) insomuch as \((np)^{1/2} \geq n^{w/2} \geq C'K^{1/2}.\) It remains to justify
\begin{equation}\label{sqexp}
    \mathbb{E}[\lambda_1(A)] \in [np(1-\epsilon_{1}),np(1+\epsilon_{2})].
\end{equation}
For the upper bound, use (\ref{Amom2}), in which \((np)^{2m_0-1}>2 \cdot C_{m_0}n^{m_0+1}(p(1-p))^{m_0}\) because \(m_0 \geq \frac{1}{c_3w}\) renders \((np/4)^{m_0-1} \geq (n^w/4)^{m_0-1}>8 \cdot n,\) and so
\[\mathbb{E}[\lambda_1(A)] \leq (\mathbb{E}[tr(A^{2m_0})])^{\frac{1}{2m_0}} \leq [(np)^{2m_0}+4m_0(1+C\cdot c) \cdot (np)^{2m_0-1}]^{\frac{1}{2m_0}} \leq np+2(1+C\cdot c),\]
whereas for the lower,
\begin{equation}\label{minn}
    \lambda_1(A) \geq \frac{1}{n}v^TAv=np+\frac{1}{n}\sum_{1 \leq i,j \leq n}{\Tilde{a}_{ij}},
\end{equation}
and
\begin{equation}\label{markov}
    \mathbb{P}(\frac{1}{n}\sum_{1 \leq i,j \leq n}{\Tilde{a}_{ij}} \leq -t) \leq t^{-2} \cdot Var(\frac{1}{n}\sum_{1 \leq i,j \leq n}{\Tilde{a}_{ij}})=O(t^{-2}p(1-p)),
\end{equation}
from which for \(x_{-}:=\max{(-x,0)},\)
\begin{equation}\label{bd1p}
    \mathbb{E}[(\lambda_1(A)-np)_{-}] \leq p^{1/2}(1-p)^{1/2}+\int_{p^{1/2}(1-p)^{1/2}}^{\infty}{\frac{C''p(1-p)}{t^2}dt}=O(p^{1/2}(1-p)^{1/2}).
\end{equation}
\end{proof}

\subsection{Asymptotic Law of \(\lambda_1(A)\)}\label{1.8}

Return to (\ref{bigeq}): given (\ref{refined31}), consider the first ratio on its right-hand side.

\begin{lemma}\label{l4}
There exist \(c_1,c_2>0\) such that if \(w>0, \tilde{p} \geq n^{w-1},\) \(m^2 \leq m_0^2=c_1\min{(n^{w},\tilde{p}n^{1-w})}, \newline m_0 \geq c_2\min{(\log{n},w^{-1})},\) 
then as \(np \to \infty,\)
\[\frac{(\xi+\mathbb{E}[\lambda_1(\overline{A})])^{2m}-(\mathbb{E}[\lambda_1(\overline{A})])^{2m}}{2m(\mathbb{E}[\lambda_1(\overline{A})])^{2m-1}\sqrt{1-p}}=\frac{\xi}{\sqrt{1-p}}(1+o_p(1)).\]
\end{lemma}

\begin{proof}
Take \(u=\alpha \cdot \frac{1}{\sqrt{n}}v+\sqrt{1-\alpha^2} \cdot v^{\perp}\) with \(\alpha \in [0,1],||u||=1,v \cdot v^{\perp}=0, Au=\lambda_1(A)u:\) then
\[\lambda_1(A)=u^TAu=\alpha^2 (np+\frac{1}{n}v^T\tilde{A}v)+\frac{2\alpha\sqrt{1-\alpha^2}}{\sqrt{n}} \cdot v^T\Tilde{A}v^{\perp}+(1-\alpha^2)\cdot (v^{\perp})^T\Tilde{A}v^{\perp}.\]
Let \(t_n=(np)^{1/4}:\) since \(t_n\to \infty,\) with high probability \(\frac{1}{n}v^T\tilde{A}v \leq t_n\sqrt{p(1-p)}\) since 
\begin{equation}\label{varr}
    Var(\frac{1}{n}v^T\tilde{A}v)=Var(\frac{1}{n}\sum_{1 \leq i,j \leq n}{\Tilde{a}_{ij}})=O(p(1-p)),
\end{equation}
whereby
\[\lambda_1(A) \leq \alpha^2 (np+t_n\sqrt{p(1-p)})+(1-\alpha^2+2\alpha\sqrt{1-\alpha^2})||\Tilde{A}|| \leq \]
\[\leq ||\Tilde{A}||+\frac{np+t_n\sqrt{p(1-p)}-||\Tilde{A}||}{2}+\sqrt{(\frac{np+t_n\sqrt{p(1-p)}-||\Tilde{A}||}{2})^2+||\Tilde{A}||^2}\]
\begin{equation}\label{eqqt}
    \leq np+t_n\sqrt{p(1-p)}+\frac{||\tilde{A}||^2}{2 \cdot np/4}
\end{equation}
using
\[2a(\cos{x})^2+2b\cos{x}\sin{x}=a+a\cos{2x}+b\sin{2x} \leq a+\sqrt{a^2+b^2},\]
\(c^2+d \leq (c+\frac{d}{2c})^2\) for \(c>0,\) as well as (\ref{evenmom}) for \(K_n=2(1-p)^{1/2}\exp(c_2^{-1})+(p(1-p))^{1/2}t_n \leq (np)^{1/2}/2\) which give 
\[\mathbb{P}(||\Tilde{A}|| \geq K_n(np)^{1/2}) \leq \frac{2C_{m_0}(p(1-p))^{m_0}n^{m_0+1}}{(K_n(np)^{1/2})^{2m_0}} \leq \frac{2n \cdot 4^{m_0}}{(K_n(1-p)^{-1/2})^{2m_0}} \leq \frac{2}{n}=o(1).\]
Therefore, with high probability
\[\lambda_1(A) \leq np+t_n\sqrt{p(1-p)}+\frac{2||\tilde{A}||^2}{np} \leq np+K_n+2K_n^2.\] 
A lower limit holding with high probability ensues from (\ref{minn}), \(\frac{1}{n}v^T\tilde{A}v \geq -t_n\sqrt{p(1-p)}\) (a consequence of (\ref{varr}) and \(t_n\to \infty\)), 
\[\lambda_1(A) \geq np-K_n,\] 
providing together with (\ref{Amom2}), which entails \(\mathbb{E}[\lambda^{2m}_1(A)] \leq (np+2(1-p))^{2m}\) for \(m \geq \frac{1+w}{c_1w}\) (using \(n\tilde{p} \geq n^w,\) and \((np+2(1-p))^{2m} \geq (np)^{2m}+3m(1-p)(np)^{2m-1}+(1-p)(np)^{2m-1}\)), that
\[\sqrt{p}|\xi| \leq |\lambda_1(A)-np|+|np-\mathbb{E}[\lambda_1(A)]| \leq  |\lambda_1(A)-np|+\mathbb{E}[(\lambda_1(A)-np)_{-}]+(\mathbb{E}[\lambda_1(A)]-np)_{+} \leq\]
\begin{equation}\label{lastt}
    \leq 2K_n^2+2K_n \leq 2K_n^2(1-p)^{-1/2}+2(1-p)^{1/2} \cdot K_n(1-p)^{-1/2} \leq (1-p)^{1/2} CR_n^2
\end{equation}
for \(R_n=K_n(1-p)^{-1/2} \leq 2t_n=2(np)^{1/4}\) (the second inequality uses \(np-\mathbb{E}[\lambda_1(A)] \leq \mathbb{E}[(\lambda_1(A)-np)_{-}],\) the third \(\mathbb{E}[\lambda_1(A)] \leq (\mathbb{E}[\lambda^{2m}_1(A)])^{1/(2m)},\) and the last \(R_n \geq 1).\)
\par
(\ref{lastt}) and \(\mathbb{E}[\lambda_1(A)] \geq np\) (via (\ref{minn}))
\[(\mathbb{E}[\lambda_1(\overline{A})])^{-k+1} \cdot |\xi|^{k-1} \leq (n\sqrt{p})^{-k+1} \cdot (\frac{(1-p)^{1/2}CR^2_n}{p^{1/2}})^{k-1}=(\frac{np}{CR_n^2\sqrt{1-p}})^{-(k-1)}.\]
Finally, the claim follows from
\[\frac{(\xi+\mathbb{E}[\lambda_1(\overline{A})])^{2m}-(\mathbb{E}[\lambda_1(\overline{A})])^{2m}}{2m(\mathbb{E}[\lambda_1(\overline{A})])^{2m-1}\sqrt{1-p}}-\frac{\xi}{\sqrt{1-p}}=\frac{\xi}{\sqrt{1-p}} \cdot \frac{1}{2m}\sum_{2 \leq k \leq 2m}{\binom{2m}{k}(\mathbb{E}[\lambda_1(\overline{A})])^{-k+1}\xi^{k-1}},\]
and \(\binom{2m}{k} \leq (2m)^k,\) \(m^2 \leq c_1(\tilde{p}n)^{1/2} \leq c_1(np)^{1/2},\) 
\[\sum_{2 \leq k \leq 2m}{(2m\delta_n)^{k-1}}=O(m\delta_n)=o(1)\]
for 
\[\delta_n=\frac{CR_n^2\sqrt{1-p}}{2np} \leq \frac{4C(np)^{1/2}}{2np}=\frac{2C}{(np)^{1/2}}.\]
\end{proof}

Theorem~\ref{th2} can now be justified.

\begin{proof}
In what follows, let \(c_3\log{n} \leq m \leq c_4\log{n}, n^w=(\tilde{p}n)^{1/2}\) with \(c_3=\frac{c_4}{8}\) and \(c_4>0\) sufficiently small compared to all the universal constants appearing in the preceding propositions and lemmas. 
Given the growth assumptions on \(\tilde{p},\) this choice of \(w\) guarantees \(\frac{1}{c^2w^2} \leq m^2 \leq c\min{(n^w,\tilde{p}n^{1-w})}\) for \(n\) sufficiently large due to \([c_3\log{n},c_4\log{n}] \subset [\frac{1}{cw},c^{1/2}(np)^{1/4}]\) for all \(n \geq n(c)\) from \(w=\frac{\log{(\tilde{p}n)}}{2\log{n}} \geq \frac{\log{\log{n}}}{\log{n}},(np)^{1/4} \geq B^{1/4}\log{n}.\) Hence Lemma~\ref{l4} can be applied and (\ref{dom1}) holds, transforming (\ref{refined31}) into
\begin{equation}\label{finaleq1}
    \frac{tr(\overline{A}^{2m})-\mathbb{E}[tr(\overline{A}^{2m})]}{2m\sqrt{1-p} \cdot (\mathbb{E}[\lambda_1(\overline{A})])^{2m-1}}=\frac{\xi}{\sqrt{1-p}}(1+o_p(1))+ \frac{\mathbb{E}[\lambda_1(\overline{A})^{2m}]-(\mathbb{E}[\lambda_1(\overline{A})])^{2m}}{2m\sqrt{1-p} \cdot (\mathbb{E}[\lambda_1(\overline{A})])^{2m-1}}+o_p(1).
\end{equation}
Take \(q=\sqrt{p(1-p)},\xi_0=\frac{\lambda_1(A)-\mathbb{E}[\lambda_1(A)]}{q},\alpha=\mathbb{E}[\lambda_1(A)]:\) in virtue of Slutky's lemma, (\ref{finaleq1}) becomes equivalent to
\[\xi_0+(1+o_p(1)) \cdot \frac{\mathbb{E}[(\xi_0 q+\alpha)^{2m}-\alpha^{2m}]}{2mq\alpha^{2m-1}} \Rightarrow N(0,2).\]
Put differently, \(\xi_0=\Tilde{N}-(1+o_p(1))c_m\) with \(\Tilde{N} \Rightarrow N(0,2),\) and
\[c_m=\frac{\mathbb{E}[(\xi_0 q+\alpha)^{2m}-\alpha^{2m}]}{2mq\alpha^{2m-1}}.\]
Theorem~\ref{th2} ensues by using Slutsky's lemma and justifying 
\begin{equation}\label{fine}
    c_m=o(1)
\end{equation}
for some \(m\) with \(c_3 \log{n} \leq m \leq c_4\log{n},\) a range for which Theorem~\ref{th1} is valid by choosing \(c_4 \leq B^{1/4}c_0.\) 
\par
Notice
\begin{equation}\label{c1}
    c_m=c(m,n,p)=\frac{\mathbb{E}[(\xi_0 q+\alpha)^{2m}-\alpha^{2m}]}{2mq\alpha^{2m-1}}=\alpha \cdot \frac{\mathbb{E}[(\frac{\xi_0 q}{\alpha}+1)^{2m}-1-2m \cdot \frac{\xi_0 q}{\alpha}]}{2mq}
\end{equation}
since \(\mathbb{E}[\xi_0]=(1-p)^{-1/2}\mathbb{E}[\xi]=0.\) 
Lemma~\ref{moments} for \(n^{w}=B^{1/2}(\log{n})^2,\) \(m_0 \geq \lfloor c_2B^{1/4}\log{n} \rfloor \geq \frac{c_2B^{1/4}\log{n}}{2}\) 
yields for \(n \geq n(c_1,c_2,B),K=16\exp{(8B^{-1/4}c_2^{-1})},r \leq \frac{m_0}{2}\) that
\[\mathbb{E}[(\frac{\lambda_1(A)}{\mathbb{E}[\lambda_1(A)]}-1)^{2r}\chi_{|\frac{\lambda_1(A)}{\mathbb{E}[\lambda_1(A)]}-1| \geq (np/K)^{-1/2}}] \leq \frac{4}{n^{3}}\cdot (\frac{np}{K})^{-r}.\]
This inequality for \(r \leq \frac{m_0}{2}, m_0 \geq 4c_4\log{n} \geq 4m,\)
\(\frac{\xi_0 q}{\alpha}=\frac{\lambda_1(A)}{\mathbb{E}[\lambda_1(A)]}-1,\) and the binomial theorem give for 
\(t_0=(np/K)^{-1/2},\)
\begin{equation}\label{c2}
    \frac{|\mathbb{E}[(\frac{\xi_0 q}{\alpha}+1)^{2m}-1-2m \cdot \frac{\xi_0 q}{\alpha}]|}{2mq}=\frac{|\mathbb{E}[((\frac{\xi_0 q}{\alpha}+1)^{2m}-1-2m \cdot \frac{\xi_0 q}{\alpha})\chi_{|\frac{\xi_0 q}{\alpha}| \leq t_0}]|}{2mq}+o(\alpha^{-1})
\end{equation}
employing that for \(X={\mathbb{E}[\lambda_1(A)]}-1=\frac{\xi_0 q}{\alpha},\)
\[|\mathbb{E}[((X+1)^m-1-mX)\chi_{|X| \geq (np/K)^{-1/2}}]| \leq \sum_{k \geq 2}{\binom{m}{k}\sqrt{\mathbb{E}[X^{2k}]}} \leq
\frac{2}{n^{3/2}} \cdot (1+(\frac{np}{K})^{-1})^m=O(n^{-3/2})=o(mq\alpha^{-1})\]
because \(c_3 \log{n} \leq m \leq c_4\log{n}, np \geq B(\log{n})^4,q=\sqrt{p(1-p)}\geq 2^{-1/2}\tilde{p}^{1/2} \geq (2n)^{-1/2}\) yield
\[\frac{\alpha n^{-3/2}}{mq} \leq \frac{2np \cdot n^{-3/2}}{(2n)^{-1/2} \cdot c_3 \log{n}}=O((\log{n})^{-1})=o(1)\]
(recall \(\alpha=\mathbb{E}[\lambda_1(A)] 
=\Omega(np)\) via (\ref{sqexp})). Since for \(|x| \leq \frac{1}{16m}, m \geq 2,\)
\[\frac{m^2x^2}{8} \leq (1+x)^{m}-1-mx \leq \frac{5m^2x^2}{8}\]
from 
\[\frac{m^2}{2} \geq \binom{m}{2} \geq \frac{m^2}{4}, \hspace{0.8cm} |(1+x)^{m}-1-mx-\binom{m}{2}x^2| \leq \sum_{k \geq 3}{\binom{m}{k}|x|^k} \leq 2(m|x|)^3 \leq \frac{m^2x^2}{8},\]
(\ref{c1}), (\ref{c2}), and \(mt_0 \leq \frac{Kc_4\log{n}}{(np)^{1/2}} \leq \frac{Kc_4\log{n}}{B^{1/2}(\log{n})^2} \leq \frac{1}{31}\) for \(n \geq n(B,c_4)\) render
\[c(m,n,p)=\Omega(\frac{\alpha}{2mq} \cdot m^2\mathbb{E}[(\frac{\xi_0 q}{\alpha})^2\chi_{|\frac{\xi_0 q}{\alpha}| \leq t_0}])+o(1)\]
or
\[c(m,n,p)=\Omega(\frac{m\alpha}{q} \cdot \mathbb{E}[(\frac{\xi_0 q}{\alpha})^2\chi_{|\frac{\xi_0 q}{\alpha}| \leq t_0}])+o(1),\]
with the constants underlying \(\Omega\) being \(\frac{1}{8},\frac{5}{8}.\)
\par
This identity for \(m_1,m_2\) with \(m_1,m_2 \in [c_3\log{n},c_4\log{n}] \cap \mathbb{Z}, m_1=6m_2\) yields (\ref{fine}) for \(m=m_1:\)
\[(1+o_p(1))c(m_1,n,p)-(1+o_p(1))c(m_2,n,p)=\Omega(c(m_1,n,p)-c(m_2,n,p))+o(1)=\Omega(c(m_1,n,p))+o(1)\]
since \(m_1-5m_2=\frac{m_1}{6},\) while for any \(C>0, \epsilon(C)>0,\) 
\[\mathbb{P}((1+o_p(1))c(m_1,n,p)-(1+o_p(1))c(m_2,n,p)>C)=\mathbb{P}(((1+o_p(1))c(m_1,n,p)+\xi_0)-((1+o_p(1))c(m_2,n,p)+\xi_0)>C) \leq\]
\[\leq \mathbb{P}((1+o_p(1))c(m_1,n,p)+\xi_0>C/2)+\mathbb{P}((1+o_p(1))c(m_2,n,p)+\xi_0<-C/2) \leq (1+\epsilon(C))\mathbb{P}(|N(0,1)|>C/2)<1,\]
the last two results implying
\[c(m_1,n,p)=o(1).\]
\end{proof}

\bibliographystyle{unsrtnat}
\bibliography{references}

\end{document}